\newtheorem{theorem}{Theorem}
\newtheorem{conjecture}{Conjecture}
\newtheorem{lemma}{Lemma}
\newtheorem{proposition}{Proposition}
\newtheorem{remark}{Remark}
\newcommand\ackname{Acknowledgements}
  \newenvironment{acknowledgements}{%
      \titlepage
      \null\vfil
      \@beginparpenalty\@lowpenalty
      \begin{center}%
        \bfseries \ackname
        \@endparpenalty\@M
      \end{center}}%
     {\par\vfil\null\endtitlepage}
\title{On the Donaldson-Scaduto conjecture}
\author{Saman Habibi Esfahani, Yang Li}
\date{\today}
\begin{document}
\maketitle

\begin{abstract}
Motivated by $G_2$-manifolds with coassociative fibrations in the adiabatic limit, Donaldson and Scaduto conjectured the existence of associative submanifolds homeomorphic to a three-holed $3$-sphere with three asymptotically cylindrical ends in the $G_2$-manifold $X \times \mathbb{R}^3$, or equivalently similar special Lagrangians in the Calabi-Yau 3-fold $X \times \mathbb{C}$, where $X$ is an $A_2$-type ALE hyperkähler 4-manifold. We prove this conjecture by solving a real Monge-Amp\`ere equation with a singular right-hand side, which produces a potentially singular special Lagrangian. Then, we prove the smoothness and asymptotic properties for the special Lagrangian using inputs from geometric measure theory. 
The method produces many other asymptotically cylindrical $U(1)$-invariant special Lagrangians in $X\times \mathbb{C}$, where $X$ arises from the Gibbons-Hawking construction.
\end{abstract}

\section{Introduction}

Simon Donaldson \cite{MR3702382} initiated a program to study $G_2$-manifolds through coassociative K3 fibrations $\pi:M^7\to B^3$ over a 3-dimensional base $B^3$, in the adiabatic limit where the diameters of the K3 fibers shrink to zero. This program is expected to lead to large classes of new examples of compact torsion-free $G_2$-manifolds. 

Subsequent work of Donaldson and Scaduto \cite{MR4479718}  provided a conjectural limiting description of certain associative submanifolds in the adiabatic setting. Roughly speaking, the generic part of the associative submanifolds is fibered over one-dimensional gradient flowlines inside the base $B^3$. These flowlines are allowed to end on the discriminant locus of $\pi: M^7\to B^3$, and generically, three flowlines can meet to form a triple junction point. Donaldson and Scaduto made several conjectures concerning the existence of the local model for the triple junction.

In the ``global'' version of the conjecture, let $(X^4, I_1, I_2, I_3)$ be a hyperkähler K3 surface, and let $\alpha_1, \alpha_2,\alpha_3$ be $(-2)$-classes in $H_2(X^4; \mathbb{R})$, namely $\alpha_i^2 = -2$ with respect to the intersection product, such that $\alpha_1 + \alpha_2 + \alpha_3 = 0$. Each $\alpha_i$ determines a complex structure $J_i$ in the $S^2$-family of complex structures, so that for suitable choices of $v_i\in \mathbb{R}^3$, 
we have three cylindrical associative submanifolds $P_i := \Sigma_i \times (\mathbb{R}^+ v_i) \subset X^4 \times \mathbb{R}^3$ for $i\in\{1,2,3\}$.

\begin{conjecture}[Donaldson-Scaduto]\label{Donaldson-Scaduto-conjecture}
 There is an associative submanifold homeomorphic to a three-holed $3$-sphere $P \subset X^4 \times \mathbb{R}^3$ with three ends asymptotic to cylinders $P_1, P_2$, and $P_3$.
\end{conjecture}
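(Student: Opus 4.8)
\noindent\emph{Outline of the argument.}

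The plan is to realize $P$ as a $U(1)$-invariant special Lagrangian in $X\times\mathbb{C}$, working with $X$ the $A_2$-type ALE hyperk\"ahler four-manifold presented through the Gibbons--Hawking ansatz: $X$ is a circle fibration over (an open set in) $\mathbb{R}^3$ determined by the harmonic function $h(\vec x)=\sum_{j=1}^{3}\tfrac1{2|\vec x-p_j|}$, whose three poles $p_1,p_2,p_3$ we may place symmetrically in a common hyperplane $\{x_1=0\}\subset\mathbb{R}^3$; in this model the three $(-2)$-classes $\alpha_i$ are the roots of $A_2$, the curves $\Sigma_i$ are the $U(1)$-invariant $(-2)$-spheres sitting over the segments $[p_j,p_{j+1}]$, and the model cylinders $P_i$ are themselves $U(1)$-invariant. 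Since everything in sight is $U(1)$-symmetric it is natural to seek $P$ in this class. First I would carry out the symmetry reduction: for a $U(1)$-invariant special Lagrangian $L$ the Lagrangian condition forces $L$ to lie over a fixed level of the moment map, which the symmetric placement of the $p_j$ lets us take to be $\{x_1=0\}$; passing to the symplectic quotient and imposing a graphical ansatz then reduces the special Lagrangian system to a single scalar real Monge--Amp\`ere equation $\det D^2u=F$ for a convex potential $u$ on a planar domain $\Omega$, in which $F$ is built from $h|_{\{x_1=0\}}$ and hence blows up at the images of the $p_j$ --- this is the ``singular right-hand side''. The three asymptotically cylindrical ends of $P$ impose a prescribed asymptotic profile on $u$ near $\partial\Omega$ (respectively near infinity), one for each $\Sigma_i$, read off from the explicit potentials of the $P_i$.

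The analytic heart of the argument is to solve this Monge--Amp\`ere equation with singular right-hand side and the prescribed asymptotic behaviour. I would construct an Aleksandrov (generalized convex) solution by an approximation argument --- solving regularized problems with smooth data and truncated right-hand side and passing to the limit --- controlled throughout by barriers built from the explicit cylinder potentials at infinity and from radial sub/supersolutions near the $p_j$, with the comparison principle available via the convex (Legendre-dual) description of the problem; the singular points of $F$ are placed on $\partial\Omega$ so that the infinite mass of the Monge--Amp\`ere measure there is absorbed by the blow-up of $u$. Away from the $p_j$ the right-hand side is smooth and strictly positive, so Caffarelli's regularity theory promotes the solution to $C^\infty$ in the interior, and matching $u$ to the cylinder models at the three ends with exponential rate follows from the spectral gap of the linearized (Jacobi) operator on each $\Sigma_i$. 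Substituting $u$ back through the reduction yields a special Lagrangian which, a priori, is only an integral current; being calibrated, it is area-minimizing in its homology class, but it could in principle be singular --- the ``potentially singular special Lagrangian''.

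The last and hardest step is to show this current is in fact a smoothly embedded submanifold of the asserted topology with the asserted asymptotics, and this is where geometric measure theory is essential. The Almgren--De Lellis--Spadaro regularity theory bounds the interior singular set of an area-minimizing integral $3$-current by Hausdorff dimension $\le 1$; a tangent-cone analysis along the putative singular set --- using the residual $U(1)$-symmetry, the explicit structure inherited from $u$, monotonicity, and the classification of low-dimensional area-minimizing cones --- is then used to exclude interior singularities and to pin down the smooth local model near the fixed circles of the $U(1)$-action. Once smoothness is known, the quotient $\bar P=P/U(1)$ is a disk-like planar region whose three ``ideal'' edges are the loci where the circle fibre collapses, so $P$ is the total space of a circle bundle over a triangle degenerating along its three edges, hence homeomorphic to $S^3$ with three open balls removed, and the end analysis identifies the three ends with $P_1,P_2,P_3$. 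Run with general Gibbons--Hawking data the same scheme produces the further asymptotically cylindrical $U(1)$-invariant special Lagrangians promised in the abstract. I expect the principal obstacle to be precisely this last package: controlling the Monge--Amp\`ere solution near its singular points sharply enough to feed into --- and then strengthen --- the general GMT regularity, and to read off the global topology of $P$; the mere existence of a weak solution is comparatively soft.
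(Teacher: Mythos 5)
Your overall strategy coincides with the paper's: $U(1)$-reduction to a real Monge-Amp\`ere equation $\det D^2\varphi=V$ on the triangle with vertices $p_1,p_2,p_3$, solution of a Dirichlet problem by approximation and barriers, interior regularity by Caffarelli (plus Mooney's partial regularity), tangent-cone analysis via geometric measure theory for smoothness, and the collapsing-circle-bundle picture for the topology. But two points in your outline are genuinely off. First, the Monge-Amp\`ere mass is \emph{finite}: in two dimensions $\int_U |u-p_j|^{-1}\,du$ converges, and this finiteness is precisely what allows one to invoke Rauch--Taylor and the Alexandrov estimate to solve the Dirichlet problem with \emph{continuous} boundary data (affine linear on each edge); the potential $\varphi$ does not blow up anywhere. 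What does blow up is the normal component of $\nabla\varphi$ along the open edges, and establishing that divergence is a separate, necessary piece of analysis: it is what guarantees that the closure of the open graph acquires no boundary points over the edges (so that $\partial L=0$ as an integral current) and that the three ends escape to infinity in the $\mathbb{R}^2$-factor. Your outline never addresses why the calibrated current obtained by taking the closure has no boundary; without the gradient-divergence statement this step fails.

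Second, the regularity step cannot rest on ``the classification of low-dimensional area-minimizing cones'': no such classification exists in the needed generality, and there \emph{are} nonplanar special Lagrangian (hence area-minimizing) $3$-cones in $\mathbb{C}^3$ --- for instance the $U(1)^2$-invariant $T^2$-cones --- which are in particular $U(1)$-invariant and so survive your symmetry reduction. The paper instead uses Joyce's classification of $U(1)$-invariant special Lagrangian cones in $\mathbb{C}^3$ and rules out each nonplanar item by hand: the $T^2$-cones and the pair-of-planes because their projection to the $(u_1,u_2)$-plane cannot be contained in the wedge at $p_i$ spanned by the two adjacent edges, and the Jacobi-elliptic cones and higher multiplicities by a degree-one projection (partial Legendre transform) argument; only after all tangent cones are shown to be multiplicity-one planes does Allard's theorem give smoothness. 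This is the missing idea at what you yourself identify as the hardest step. A smaller point: the circle fibre collapses over the vertices $p_i$ (the fixed points of the $U(1)$-action on $X$), not over the edges; the edges carry the three cylindrical ends. The disk-with-three-collapsed-boundary-arcs picture still yields the three-holed $S^3$, but the labelling in your outline is inverted.
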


In the ``local'' version of the conjecture, the K3 surface is replaced with an $A_2$-type ALE gravitational instanton $X^4_{A_2}$. In the Gibbons-Hawking ansatz, $X^4_{A_2}$ is defined as the completion of a $U(1)$-bundle over $\mathbb{R}^3 \setminus \{p_1, p_2, p_3\}$. We make the genericity assumption that $p_1, p_2, p_3$ are three non-collinear points in $\mathbb{R}^3$, which corresponds to the condition that the three $(-2)$-spheres $\Sigma_i$ are holomorphic with respect to different complex structures. The relation to the global version is that when the K3 surface is the small desingularization (e.g., smoothing or resolution) of an orbifold with local $A_2$-singularity, then the local version captures the metric behavior near the desingularization region.

We prove the local version in this note.

\begin{theorem}[Donaldson-Scaduto conjecture, local version]\label{Donaldson-Scaduto-conjecture} There exists a $U(1)$-invariant associative submanifold  $P \subset X^4_{A_2} \times \mathbb{R}^3$  homeomorphic to a three-holed $3$-sphere, with three ends asymptotic to the half-cylinders $\Sigma_i \times (\mathbb{R}^+ v_i)$, where $i \in \{1,2,3\}$.
\end{theorem}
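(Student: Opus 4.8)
\emph{Step 1: Dimensional reduction to a real Monge--Amp\`ere equation.} I would work on the Calabi--Yau $3$-fold $X^4_{A_2}\times\mathbb{C}$ and use the circle symmetry built into the Gibbons--Hawking picture: $X^4_{A_2}$ is the completion of a $U(1)$-bundle over $\mathbb{R}^3\setminus\{p_1,p_2,p_3\}$ with curvature $\ast\,dh$ for the harmonic function $h=c+\sum_i\tfrac{1}{2|x-p_i|}$, and this $U(1)$ acts on $X^4_{A_2}\times\mathbb{C}$ preserving $\omega$ and $\Omega$. For a $U(1)$-invariant special Lagrangian $P$ the orbits are tangent to $P$ wherever the action is free, so the moment map of the action is constant on $P$; choosing coordinates so that the affine plane through $p_1,p_2,p_3$ is $\{x_1=0\}$, that constant is $0$, and the quotient $\bar P=P/U(1)$ is a Lagrangian surface inside the symplectic reduction $\{x_1=0\}\times\mathbb{C}\cong\mathbb{R}^2\times\mathbb{C}$. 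Writing $\bar P$ as a gradient graph $\{(q,\nabla\phi(q)):q\in\Omega\}$ of a convex potential on a planar domain $\Omega$ and absorbing the Gibbons--Hawking conformal factor, the special condition $\operatorname{Im}\Omega|_P=0$ becomes a real Monge--Amp\`ere equation
\[
\det D^2\phi \;=\; h \qquad\text{on }\Omega,
\]
where $h$ now denotes its restriction to the slice, a positive function that is smooth away from the images $q_1,q_2,q_3$ of $p_1,p_2,p_3$ and has locally integrable $\tfrac12|q-q_i|^{-1}$ poles there. Under this correspondence each prescribed cylindrical end $\Sigma_i\times(\mathbb{R}^+v_i)$ becomes a prescribed asymptotic profile for $\nabla\phi$: $\Sigma_i/U(1)$ is an interval, and the end forces $\phi$ to be asymptotic to an explicit model function whose gradient graph is the reduced half-cylinder in the corresponding region of $\Omega$ near infinity.

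\emph{Step 2: A weak solution and a minimizing current.} Next I would solve this asymptotic problem in the Aleksandrov sense, constructing $\phi$ as the upper envelope of convex subsolutions $\psi$ with Monge--Amp\`ere measure $\le h\,dq$ that lie below the three model functions of Step 1; these models act as barriers pinning the behavior at infinity, and the non-collinearity of $p_1,p_2,p_3$ is exactly what makes the asymptotic data compatible with a genuinely two-dimensional, strictly convex envelope possessing three ends. The convex function obtained this way has a closed Lagrangian gradient graph, hence defines a special Lagrangian integral current $P\subset X^4_{A_2}\times\mathbb{C}$ (equivalently an associative current in $X^4_{A_2}\times\mathbb{R}^3$) --- the potentially singular special Lagrangian of the theorem. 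Being calibrated by $\operatorname{Re}\Omega$, this current is area-minimizing in its homology class, which is the property that will drive the regularity theory.

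\emph{Step 3: Smoothness.} Away from the collapsed circle fibers over $q_1,q_2,q_3$, where the $U(1)$-action is free and $h$ is smooth and bounded between positive constants on compact sets, interior regularity for the Monge--Amp\`ere equation applies: $\phi$ is strictly convex (no segment occurs in the support of its Monge--Amp\`ere measure, by the asymptotics), so Caffarelli's theorem gives $\phi\in C^{1,\alpha}_{\mathrm{loc}}$, and Schauder bootstrapping with the smooth right-hand side yields $\phi\in C^\infty$; hence $P$ is a smooth embedded special Lagrangian there. On the collapsed fibers --- where $h$ blows up and $P$ meets the exceptional $(-2)$-spheres --- I would argue through geometric measure theory: since $P$ is area-minimizing, the monotonicity formula gives at each point a tangent cone which is an area-minimizing special Lagrangian cone; using the density bound inherited from the construction together with the clean intersection with the $(-2)$-spheres, one shows this cone is a multiplicity-one plane (ruling out, in particular, nonplanar calibrated cones such as the Harvey--Lawson cone), so $P$ is smooth everywhere by Allard's regularity theorem.

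\emph{Step 4: Cylindrical asymptotics, topology, and the main obstacle.} For each end, the barriers of Step 2 and the monotonicity formula force the tangent cone at infinity to be the cylinder $\Sigma_i\times\mathbb{R}v_i$; this is then upgraded to genuine, exponentially fast convergence to $\Sigma_i\times(\mathbb{R}^+v_i)$ by analyzing the Jacobi operator on the cylinder over the closed surface $\Sigma_i$ --- a {\L}ojasiewicz--Simon-type argument for asymptotically cylindrical minimal submanifolds. Finally, reassembling $P$ as the $U(1)$-bundle over the solution domain with the circle degenerating over the arcs joining $q_1,q_2,q_3$ identifies $P$ as a connected $3$-manifold with three ends asymptotic to $\Sigma_i\times(\mathbb{R}^+v_i)$ and homeomorphic to $S^3$ with three points removed, i.e. a three-holed $3$-sphere; since nothing here is special to $X^4_{A_2}$ beyond the Gibbons--Hawking form and the choice of compatible asymptotics, the same scheme produces the further families mentioned in the introduction. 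The heart of the argument --- and the place I expect the real work --- is Steps 3--4, namely promoting the Aleksandrov solution to a smooth, embedded, asymptotically cylindrical special Lagrangian: interior smoothness is genuinely delicate because special Lagrangian $3$-folds can be singular (Harvey--Lawson cones), so it must be extracted from the specific features of this construction (convexity of $\phi$, the density and topological constraints), and the exponential decay to the model cylinders requires precisely the geometric-measure-theoretic inputs --- monotonicity, Allard regularity, uniqueness of tangent cones at infinity --- announced in the introduction.
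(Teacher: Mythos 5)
Your overall skeleton --- $U(1)$ reduction to a real Monge--Amp\`ere equation, geometric measure theory for smoothness at the fixed points, then asymptotics and topology --- matches the paper's strategy. But your Steps 1--2 get the geometry of the reduced problem wrong, and this is the step on which everything else rests. The ends $\Sigma_i\times(\mathbb{R}^+v_i)$ have \emph{bounded} projection to the $u$-plane (namely the edge $[p_i,p_{i+1}]$, since $\Sigma_i=\pi^{-1}[p_i,p_{i+1}]$) and unbounded projection to the $y$-plane. Consequently the gradient-graph potential lives on the \emph{bounded} open triangle $U$ with vertices $p_1,p_2,p_3$; the poles of $V$ sit at the \emph{corners} of the domain, not at interior points $q_i$; and the cylindrical asymptotics are encoded not by "behavior at infinity of $\Omega$" but by the divergence of the normal derivative $\nabla\varphi$ at the open edges, together with the tangential derivative tending to a constant. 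The correct translation of the asymptotic data is therefore a \emph{Dirichlet boundary condition with affine-linear data on each edge} of the triangle, solved by approximating the (non-strictly-convex) polygon by strictly convex domains and using Rauch--Taylor plus Alexandrov estimates. Your picture of an unbounded domain with three ends and barriers "pinning the behavior at infinity" is incompatible with the required asymptotics (a graph over a region of $\Omega$ near infinity in $u$ would produce ends with unbounded $u$-projection), so the envelope construction as you describe it has no well-posed problem to solve.

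Two further points would still need real work even after fixing the set-up. First, before any tangent-cone analysis you must show the closure of the open graph piece is a stationary integral current with $\partial L=0$; this requires a mass bound near the $U(1)$-fixed points and a cutoff/integration-by-parts argument, which you assert rather than prove. Second, at the points above the vertices, "density bound plus clean intersection with the $(-2)$-spheres" does not by itself rule out non-planar cones: the paper needs Joyce's classification of $U(1)$-invariant special Lagrangian cones in $\mathbb{C}^3$, excludes the $T^2$-cones and the union of two planes via the constraint that the $u$-projection of any tangent cone lies in the wedge at the vertex (of angle $<\pi$), and excludes the elliptic cones and higher multiplicity by a degree-one graphicality argument over the partial Legendre coordinates. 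You correctly identify this as the hard point, but the mechanism you propose is not the one that works. For the asymptotics, your {\L}ojasiewicz--Simon route is plausible but heavier than needed; the paper first extracts polynomial decay from convexity estimates on $\varphi$, applies Allard for quantitative smoothness near the edge endpoints, and then gets exponential decay from the elementary observation that $y_2$ is harmonic on the minimal submanifold $L$, via a differential inequality for the Dirichlet energy of the end.
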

\begin{figure}[H]
\includegraphics[width=12cm]{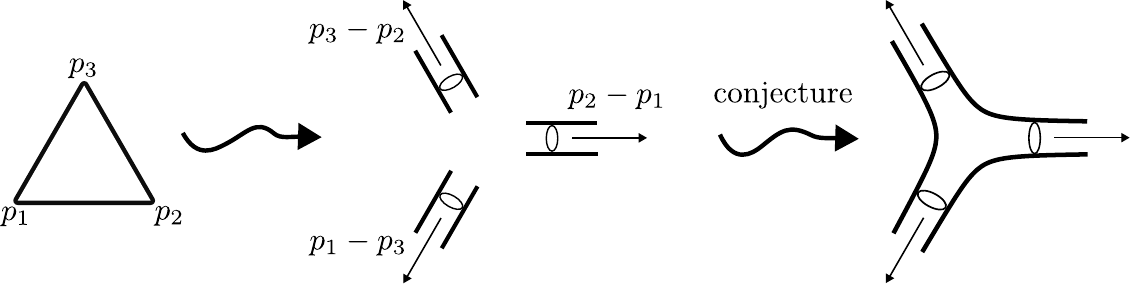}
\centering
  \caption{Donaldson-Scaduto conjecture.}
  \label{Donaldson-Scaduto-figure}
\end{figure}
In fact, since the vectors $v_1, v_2$, and $v_3$ lie in a plane, say $\mathbb{R}^2 \times \{0\} \subset \mathbb{R}^3$, the associative submanifold $P$ can be equivalently interpreted as a special Lagrangian submanifold in $X^4\times \mathbb{R}^2$ with an appropriate Calabi-Yau structure. Our method readily generalizes to the $A_{n-1}$-type ALE or ALF gravitational instantons $X$, where $n\geq 3$, and the monopole points $p_1,\ldots, p_n$ in the Gibbons-Hawking ansatz are in the ``convex position'', namely they are the vertices of a convex polygon in a plane in $\mathbb{R}^3$, arranged in the counterclockwise orientation. We equip $X\times \mathbb{R}^2$ with a natural product Calabi-Yau 3-fold structure.

\begin{theorem}[Generalization] \label{generalizationDonaldsonScaduto}  There is an $(n-1)$-dimensional family of $U(1)$-invariant special Lagrangian submanifolds in the Calabi-Yau 3-fold $X\times \mathbb{R}^2$, each homeomorphic to an $n$-holed 3-sphere and with $n$ asymptotically cylindrical ends, modeled on the product of $\Sigma_i\subset X$ and $\{ (p_{i+1}-p_i) \cdot y= c_i   \} \subset \mathbb{R}^2$, 
 where the parameters $\{c_i\}_{i=1}^n$ satisfy one constraint, $\sum_{i=1}^n c_i=0$.
\end{theorem}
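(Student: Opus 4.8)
The plan is to carry out, for general $n$ (and uniformly in the ALE and ALF cases), the same construction that establishes the local Donaldson--Scaduto conjecture, keeping track of the $n$ ends and the parameters $c_i$. Work in the Gibbons--Hawking picture: $X$ is the completion of a circle bundle over $\mathbb{R}^3\setminus\{p_1,\dots,p_n\}$ with harmonic potential $V$ (with $V=\sum_i|x-p_i|^{-1}$ in the ALE case and $V=\lambda+\sum_i|x-p_i|^{-1}$, $\lambda>0$, in the ALF case), hyperkähler triple $\omega_1,\omega_2,\omega_3$, and $U(1)$-action rotating the fibers. On $X\times\mathbb{R}^2$ take the product Calabi--Yau structure $\omega_Z=\omega_1+dy_1\wedge dy_2$, $\Omega_Z=(\omega_2+i\omega_3)\wedge(dy_1-i\,dy_2)$; a short computation shows that, with the complex orientation on the $\mathbb{R}^2$-factor chosen this way, each product $\Sigma_i\times\{(p_{i+1}-p_i)\cdot y=c_i\}$ is special Lagrangian of a single fixed phase, which pins down the model ends.

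The first step is to reduce the $U(1)$-invariant special Lagrangian equation to a scalar PDE. A $U(1)$-invariant Lagrangian in $(X\times\mathbb{R}^2,\omega_Z)$ is contained in a level set of the moment map, which is the first $\mathbb{R}^3$-coordinate $x_1$; hence, after a translation, such a submanifold $P$ lies entirely over the plane $\{x_1=0\}$ --- precisely the plane containing the $p_i$, consistent with the coplanarity and convex-position hypothesis --- and is the preimage $P=\pi^{-1}(S)$ of a surface $S\subset\{x_1=0\}\times\mathbb{R}^2$ under $\pi\colon X\times\mathbb{R}^2\to\mathbb{R}^3\times\mathbb{R}^2$. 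Writing $S$ locally as a graph $(x_2,x_3)\mapsto(y_1,y_2)$, the remaining conditions say first that the $1$-form $y_1\,dx_3-y_2\,dx_2$ is closed --- giving a potential $\phi(x_2,x_3)$ with $(y_1,y_2)=(\phi_{x_3},-\phi_{x_2})$ --- and then that $\phi$ solves, up to sign and normalization, the real Monge--Amp\`ere equation $\det D^2\phi=V|_{\{x_1=0\}}$. This is the advertised equation with singular right-hand side: $V|_{\{x_1=0\}}$ is smooth and positive away from the $p_i$ and blows up like $|x-p_i|^{-1}$ --- an integrable, atom-free singularity of the Monge--Amp\`ere measure --- at each $p_i$.

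Next I would set up and solve the corresponding boundary/asymptotic problem. The relevant domain $\Omega$ is bounded: the interior of the convex polygon with vertices $p_1,\dots,p_n$, with the open edges $[p_i,p_{i+1}]$ and the vertices on $\partial\Omega$; along the $i$-th edge $\nabla\phi$ escapes to infinity in the direction normal to that edge, producing the $i$-th asymptotically cylindrical end whose ``height'' is the prescribed constant $c_i$. Existence of a convex (Alexandrov) solution with this edge behavior follows from convex analysis, as in the $n=3$ case (a Perron-type argument, or an optimal-transport / second-boundary-value reformulation); the $n$ prescribed edge-asymptotics must be mutually compatible around $\partial\Omega$, and since $\sum_i(p_{i+1}-p_i)=0$ this reduces to the single scalar constraint $\sum_{i=1}^n c_i=0$, so letting $(c_1,\dots,c_n)$ vary subject to it yields the claimed $(n-1)$-dimensional family. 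Because the Monge--Amp\`ere measure has no atoms the solution is strictly convex and $C^1$, but since $V\to\infty$ at the $p_i$ it need not be $C^2$ there; at this stage $P=\pi^{-1}(S)$ is therefore only a \emph{potentially singular} special Lagrangian, i.e.\ a calibrated integral current, with the fiber circle collapsing over each $p_i$ so that the naive $U(1)$-quotient description acquires a conical point there.

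The bulk of the work --- and the main obstacle --- is the regularity and asymptotics, where I would import geometric measure theory exactly as in the $n=3$ proof. Away from the collapsed circles, $P$ is calibrated, hence area-minimizing, and the graphical/convex structure combined with the regularity theory for calibrated currents shows that $P$ is a smooth embedded submanifold there. At each collapsed circle over $p_i$ I would carry out a tangent-cone analysis: using the Gibbons--Hawking chart $\mathbb{C}^2\times\mathbb{R}^2$ near $p_i$ (in which the collapse is modeled on the Hopf fibration), classify the possible tangent cones of a $U(1)$-invariant special Lagrangian at such a point, show that the density is one and the tangent cone is the smooth model, and then invoke a removable-singularity/rigidity result to conclude that $P$ is smooth across these circles --- this is the delicate step, since a priori the quotient picture suggests a genuine cone singularity and one must show the calibration condition excludes it. The asymptotically cylindrical convergence at each of the $n$ ends, with exponential rate, I would obtain by comparison with the model $\Sigma_i\times\{(p_{i+1}-p_i)\cdot y=c_i\}$, using convexity of $\phi$ and the precise behavior of $V$ near the edge $[p_i,p_{i+1}]$; the ALF case differs only through the harmless additive constant $\lambda$, which alters the growth of $\phi$ at spatial infinity but not the edge asymptotics. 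Finally, for the topology: $P$ fibers over the contractible surface $S$ with circle fibers collapsing over the $n$ points $p_i$ and with exactly $n$ ends, one over each edge; comparing this circle-fibration model with the standard Hopf-fibration picture on each chart identifies $P$ with $S^3$ minus $n$ disjoint open balls, its $n$ boundary $2$-spheres matching the $\Sigma_i$.
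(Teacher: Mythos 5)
Your proposal follows essentially the same route as the paper: dimensional reduction of the $U(1)$-invariant special Lagrangian condition to a real Monge--Amp\`ere equation $\det D^2\varphi = V$ on the convex polygon with the monopole points as vertices, gradient blow-up along the open edges producing the $n$ cylindrical ends with the constraint $\sum c_i=0$, geometric-measure-theoretic tangent-cone analysis (via the classification of $U(1)$-invariant special Lagrangian cones in $\mathbb{C}^3$ plus Allard regularity) at the points over the vertices, exponential asymptotics by comparison with the model cylinders, and the collapsing-circle-fibration picture for the topology. The one place where the paper is sharper than your sketch is the encoding of the edge asymptotics: rather than a second-boundary-value or optimal-transport formulation, the paper observes that the condition ``tangential derivative $\to c_i$'' is equivalent to prescribing \emph{affine linear} Dirichlet data $b_i$ on each edge (with $c_i=b_{i+1}-b_i$, whence the constraint), which reduces everything to a standard Dirichlet problem solvable by exhaustion with strictly convex domains.
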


The $(n-1)$ parameters geometrically correspond to the translation of the $n$ asymptotic cylinder ends, subject to one constraint coming from the vanishing of the integral of $\text{Im}(\Omega)$. Specifically, two of these parameters account for global translations along the $\mathbb{R}^2$ direction, while the remaining $(n-3)$ parameters yield geometrically distinct special Lagrangians. Moreover, these special Lagrangians remain rigid after fixing the asymptotic conditions, as studied in \cite[Theorem 43.]{MR4495257}.

\begin{remark}
\emph{
In \cite[Theorem 1]{MR4945554}, it is shown that any special Lagrangian submanifold homeomorphic to an $n$-holed 3-sphere with $n$ asymptotically cylindrical ends, satisfying the conditions of Theorem \ref{generalizationDonaldsonScaduto}, belongs to the $(n-1)$-dimensional family of $U(1)$-invariant special Lagrangians constructed in this paper.}
\end{remark}

We expect these special Lagrangians to be useful as building blocks in the gluing construction of new special Lagrangians in ``local Calabi-Yau 3-folds'' admitting a fibration of $A_{n-1}$-type spaces over a Riemann surface. 

\vspace{5pt}

\textbf{Plan of the paper.} 
We focus on Theorem \ref{generalizationDonaldsonScaduto}. In Section \ref{Preliminaries}, we introduce the geometric structures on the ambient spaces. In Section \ref{setup}, we dimensionally reduce the $U(1)$-invariant special Lagrangian conditions to an equation for surfaces in the symplectic quotient. Under an additional graphical assumption, this leads to a 2-dimensional real Monge-Amp\`ere equation for some potential $\varphi$ over the convex polygon with vertices $p_1,\ldots, p_n$. In Section \ref{RMAequation}, we solve the appropriate Dirichlet problem for $\varphi$, where the boundary data is given by affine linear functions on each edge of the polygon. The $U(1)$-bundle $L^{\circ}$ over the gradient graph of the solution over the open solid convex polygon is an open $U(1)$-invariant special Lagrangian in $X \times \mathbb{R}^2$. 

In Section \ref{tangentcone}, we take the closure of $L^{\circ}$, denoted by $L$. To prove $L$ is a closed submanifold, we need two extra ingredients. First, the gradient of $\varphi$ diverges to infinity near the edges of the convex polygon away from the vertices, and therefore, the edges give no contribution to the closure of the special Lagrangian; this uses some analysis on the real Monge-Amp\`ere equation. Second, the vertex contributions introduce only smooth points to the special Lagrangian; this uses some geometric measure theory, as well as the classification of $U(1)$-invariant special Lagrangian cones in $\mathbb{C}^3$, following some earlier idea of Joyce \cite{MR2122282}. In Section \ref{Asymptotic-behavior}, we prove an exponential decay estimate and show that $L$ has the expected asymptotic behavior. In Section \ref{topology}, we prove $L$ is homeomorphic to an $n$-holed 3-sphere. This concludes the proof of Theorem \ref{generalizationDonaldsonScaduto}. 

\vspace{5pt} 

\textbf{Acknowledgement.} We thank our common teacher Prof. Simon Donaldson for suggesting this problem to us and the referee for useful comments on the previous draft. S.H. thanks Mark Haskins and Rafe Mazzeo for discussions on related topics. Y.L. was supported by the Clay Maths Research Fellowship. 

\section{Preliminaries: ambient spaces}\label{Preliminaries}

We recall the hyperkähler structure on the $U(1)$-invariant gravitational instanton $X$, and describe the Calabi-Yau structure on $Z = X \times \mathbb{R}^2$ and the $G_2$-structure on $M = X \times \mathbb{R}^3$.
\vspace{5pt}

\noindent
\textbf{Hyperkähler structure.} 
Let $X$ be a complete non-compact $U(1)$-invariant hyperkähler 4-manifold, given by the Gibbons-Hawking construction as follows. Let $n\geq 3$, and let $p_1, p_2 \ldots, p_n$ be $n$ distinct points in $\mathbb{R}^3$. We will assume that $p_i$ are contained in the plane $\mathbb{R}^2\times \{0\} \subset \mathbb{R}^3$, and in the `convex position', namely they are the vertices of a convex polygon, arranged in counterclockwise order. In the $n=3$ case, up to coordinate rotation, this amounts to the genericity assumption that $p_1,p_2,p_3$ are non-colinear. 
 
Let $u_1,u_2,u_3$ denote the coordinates on $\mathbb{R}^3$. Let 
\begin{align*}
\pi: X^{\circ} \to \mathbb{R}^3 \setminus \{p_1, p_2, \ldots, p_n\},    
\end{align*}
be a principal $U(1)$-bundle, with Chern class 1 on small $S^2$ around each point $p_i$. Let $V: \mathbb{R}^3 \setminus \{p_1, p_2, \ldots, p_n\} \to \mathbb{R}$ be the positive harmonic function 
\begin{align*}
    V(u) =  A + \sum_{i=1}^n \frac{1}{2 |u-p_i|},\quad A=\text{constant}\geq 0,
\end{align*}
and let $\theta$ be a $U(1)$-connection on $X^{\circ}$ with curvature 2-form $*dV$. The Gibbons-Hawking ansatz describes a hyperkähler structure on $X^{\circ}$ given by the symplectic forms
\begin{align*}
\omega_1 = \theta\wedge du_1  + V du_{2} \wedge du_{3}, \quad 
\omega_2 = \theta\wedge du_2   + V du_{3} \wedge du_{1}, \quad 
\omega_3 =\theta\wedge du_3   + V du_{1} \wedge du_{2},
\end{align*}
and the metric $g = V^{-1} \theta^2 + V \sum_{i=1}^3 du_i^2$. 

The coordinates $u_1,u_2, u_3$ are the moment maps with respect to the symplectic forms $\omega_1, \omega_2, \omega_3$, respectively. The manifold $X$ is obtained by adding a point above each $p_i$, and the hyperkähler structure extends smoothly to $X$, with the corresponding complex structures $I_1, I_2, I_3$.  In fact, for each $(a_1,a_2,a_3)\in S^2 \subset \mathbb{R}^3$, we obtain a complex structure $\sum_{i=1}^3 a_i I_i$ on $X$. For $A=0$, the hyperkähler manifold $X$ is an $A_{n-1}$-type ALE space, and for $A>0$, it is an $A_{n-1}$-type ALF space.

Let $\Sigma_i = \pi^{-1}[p_{i},p_{i+1}]$ be the preimage of $[p_{i},p_{i+1}]$, the line segment from $p_{i}$ to $p_{i+1}$, and for convenience denote $p_{n+1}=p_1$. Each $\Sigma_i$ is a 2-sphere, which is holomorphic with respect to the complex structure associated with the vector $v_i$, 
\begin{align*}
    v_{i} = \frac{(p_{i+1} - p_{i})}{|p_{i+1} - p_{i}|} \in S^2 \cap (\mathbb{R}_{(u_1,u_2)}^2 \times \{0\}) \subset \mathbb{R}_{(u_1,u_2,u_3)}^3.
\end{align*}

\vspace{5pt}

\noindent
\textbf{Calabi-Yau structure.} Let $Z = X \times \mathbb{R}^2_{(y_1,y_2)}$. The $6$-dimensional manifold $Z$ can be equipped with the Calabi-Yau structure 
\[
g_Z = g_X + g_{\mathbb{R}^2}, \quad \omega = \omega_3 + dy_2 \wedge dy_1,\quad \Omega = (\omega_1 + i \omega_2) \wedge (dy_2 + i dy_1).
\]
where $y_1, y_2$ denote the coordinates on $\mathbb{R}^2$. Note that with our convention
\begin{align*}
    \omega^3 = \frac{3\sqrt{-1}}{4} \Omega \wedge \overline{\Omega}.
\end{align*}
We extend the $U(1)$-action on $X$ to a $U(1)$-action on $Z$ by $e^{i t} \cdot (q,y) \to (e^{it} \cdot q, y)$, for any $q \in X$ and $y \in \mathbb{R}^2$.

Let $\tilde{v}_i= Rv_i$, where $R: \mathbb{R}^2_{(u_1,u_2)}\to \mathbb{R}^2_{(y_1,y_2)}$ is the linear transformation given by the 90-degree rotation,
\[
R( \partial_{u_1})= -\partial_{y_2},\quad R( \partial_{u_2})= \partial_{y_1}.
\]
Let $L_{i} $ be $ \Sigma_{i} \times (\mathbb{R}^+ \cdot \tilde{v}_{i}) \subset X \times \mathbb{R}^2_{(y_1,y_2)}$ translated along some vector in the $\mathbb{R}^2_{(y_1,y_2)}$, so that $L_i$ is contained inside
\[
\Sigma_i \times \{ y \in \mathbb{R}^2 \; | \; (p_{i+1}-p_i)\cdot y=c_i \}\subset X\times \mathbb{R}^2,\quad i \in \{1,2,\ldots, n\}.
\]
A direct computation shows that $L_i$ are $U(1)$-invariant special Lagrangians in $Z$, 
\begin{align*}
\omega_{|_{L_i}} \equiv 0 \quad \text{ and } \quad 
\text{Im}(\Omega)_{|_{L_i}} \equiv 0, \quad \quad \text{ for } \quad \quad i \in \{1,2,\ldots, n\}.
\end{align*}

\begin{remark}
The 90-degree rotation is an artefact of our choice of holomorphic volume form $\Omega$. This choice will be useful when we later derive the real Monge-Amp\`ere equation. 
\end{remark}

\vspace{5pt}

\noindent
\textbf{$G_2$ structure.} Let $M = X \times \mathbb{R}^3$. The 7-dimensional manifold $M$ can be equipped with a torsion-free $G_2$-structure 
\begin{align*}
    \phi = dy_1 \wedge dy_2 \wedge dy_3 
      + & dy_1 \wedge du_1 \wedge \theta - V dy_1 \wedge du_2 \wedge du_3 \\
    +& dy_2 \wedge du_2 \wedge \theta 
    - V dy_2 \wedge du_3 \wedge du_1 \\
    +& dy_3 \wedge du_3 \wedge \theta
    - V dy_3 \wedge du_1 \wedge du_2,
\end{align*}
where $y_1, y_2, $ and $y_3$ denote the coordinates on $\mathbb{R}^3$. The associated $G_2$-metric is given by $g_M = g_X + g_{\mathbb{R}^3}$.

Let $P_{i} = \Sigma_{i} \times (\mathbb{R}^+ \cdot v_{i}) \subset X \times \mathbb{R}^3$, for $i \in \{1,2,3\}$, where we regard $v_i$ as a vector in $\mathbb{R}^3_{(y_1,y_2,y_3)} \cong \mathbb{R}^3_{(u_1,u_2,u_3)}$. The cylinders $P_i$ are $U(1)$-invariant associatives in $M$, namely $\phi_{|_{P_i}} = vol_{P_i}$, where the volume form $vol_{P_i}$ is defined with respect to the restriction of the Riemannian metric $g_M$ to $P_i$.

Identifying $M$ as $Z \times \mathbb{R}_{y_3}$, we have $\phi = -dy_3 \wedge \omega - \text{Im}(\Omega)$. For any submanifold $L$ in $X \times \mathbb{R}^2$, we define $\tilde{L} = \{(q,y) \in X \times \mathbb{R}^2 \; | \; (q, R^{-1}y) \in L \}$. For any $a \in \mathbb{R}$, the submanifold $P = L \times \{a\}$ is an associative submanifold of $M$ if and only if $\tilde{L}$ is a special Lagrangian submanifold in $X \times \mathbb{R}^2$. The upshot is that Theorem \ref{Donaldson-Scaduto-conjecture} follows from Theorem \ref{generalizationDonaldsonScaduto}.

\section{Dimensional reduction and real Monge-Amp\`ere equation}

We look for the dimensional reduction on the desired special Lagrangian submanifolds $L\subset X\times \mathbb{R}^2$ with cylindrical ends $L_i$ in the symplectic quotient, where the case $n=3$ has been conjectured by Donaldson and Scaduto. Under some heuristic assumptions, this leads to a certain real Monge-Amp\`ere equation with some specific Dirichlet boundary condition. The rest of this section will rigorously establish the existence and the properties of the solution. 

\subsection{The setup}\label{setup}

The $U(1)$-invariance of the asymptotic ends $L_i$ motivates us to search for $L$ among $U(1)$-invariant submanifolds. The $U(1)$-moment map associated to the symplectic form $\omega$ on $Z$ is $u_3$, which must be constant on the $U(1)$-invariant Lagrangian $L$, and the constant is fixed to be zero, since it is zero on the asymptotic cylinders. The special Lagrangian conditions $\omega_{|_{L}} \equiv 0$ and $\text{Im}(\Omega)_{|_{L}} \equiv 0$ for the $U(1)$-invariant  $L \subset X \times \mathbb{R}^2$ reduce to 
\begin{align}\label{dim-red-special-lag}
    V du_1 \wedge du_2 - dy_1 \wedge dy_2 = 0, \quad \quad 
    \text{ and } \quad \quad 
    du_1 \wedge dy_1 + du_2 \wedge dy_2 = 0,
\end{align}
respectively. The dimensionally reduced Lagrangian is
\begin{align*}
    L_{\text{red}} := L/U(1) \subset Z_{\text{red}} := u_3^{-1}(0)/U(1).
\end{align*}   
Topologically, $Z_{\text{red}}$ can be identified with $\mathbb{R}^4$ with coordinates $(u_1, u_2, y_1,y_2)$, but equipped with a degenerate reduced Kähler structure. Let 
\begin{align*}
    \pi_1: Z_{\text{red}}\to \mathbb{R}^2_{(u_1,u_2)}, \quad \quad \pi_2: Z_{\text{red}} \to \mathbb{R}^2_{(y_1,y_2)},
\end{align*}
be the natural projection maps.

The $U(1)$-reduction of the cylindrical special Lagrangians $L_i$ results in half-strips $L_{i,red}$ contained inside the cylinder
\[
   [p_{i},p_{i+1}] \times 
   \{ (p_{i+1}-p_i)\cdot y=c_i  \}
    \subset \mathbb{R}^2_{(u_1,u_2)} \times  \mathbb{R}^2_{(y_1,y_2)}, 
\]
where $[p_{i},p_{i+1}]$ denotes the closed line segment in $\mathbb{R}^2 \times \{0\} \subset \mathbb{R}^3$ that connects the points $p_{i}$ and $p_{i+1}$, and we will require the $n$ parameters $c_i$ to sum to zero (See the Appendix). Therefore, 
\begin{align*}
    \pi_1(L_{i,\text{red}}) = [p_{i},p_{i+1}] \subset \mathbb{R}^2_{(u_1,u_2)} \; \text{ and } \; 
    \pi_2(L_{i,\text{red}}) =\text{a translation of } (\mathbb{R}^+ \cdot \tilde{v}_i) \subset \mathbb{R}^2_{(y_1,y_2)},
\end{align*}
as shown in Figure \ref{strip}.
\begin{figure}[H] 
\includegraphics[width=4.6cm]{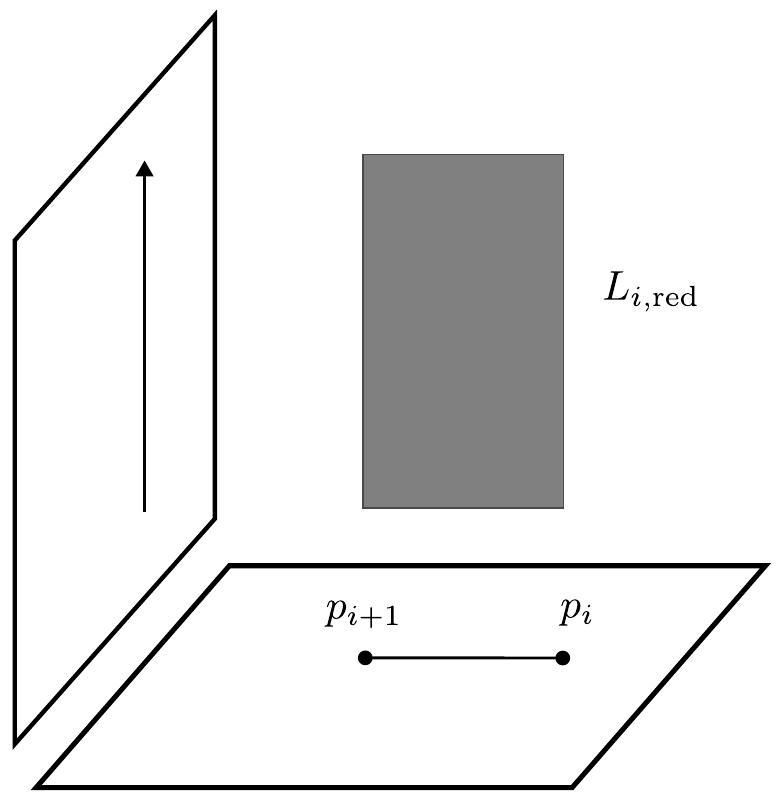}
\centering
  \caption{$L_{i,\text{red}} \subset \mathbb{R}^4 = \mathbb{R}^2_{(u_1,u_2)} \times \mathbb{R}^2_{(y_1,y_2)}$.}
   \label{strip}
\end{figure}
Therefore, $\pi_1(\cup_i L_{i,\text{red}} )$ is the 
boundary of the convex polygon 
with vertices $p_1, \ldots, p_n$, and $\pi_2( \cup_i L_{i,\text{red}} )$ is the union of $n$ rays, as shown in Figure \ref{triangle-rays2} for the case $n = 3$.
\begin{figure}[H]  
\includegraphics[width=8cm]{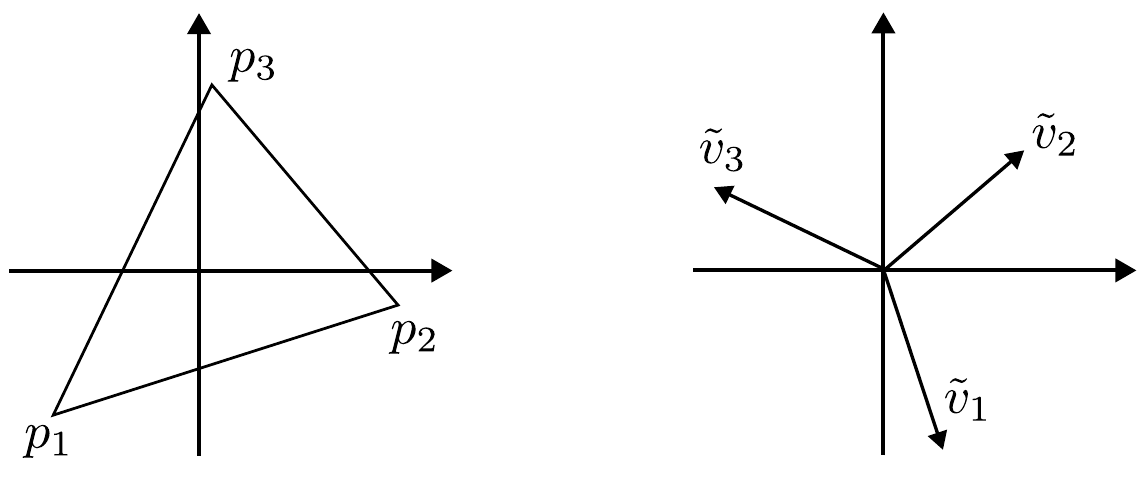}
\centering
  \caption{$\pi_1( \cup L_{i,\text{red}})$ (left) and $\pi_2(\cup L_{i,\text{red}} )$ (right).}
  \label{triangle-rays2}
\end{figure}

\textbf{Graphical case:} 
The asymptotic cylindrical requirement motivates us to look for the reduction $L_{\text{red}}$ of the conjectural special Lagrangian $L$, as (the closure of) the graph of a map
\begin{align*}
    F : U \subset \mathbb{R}^2_{(u_1,u_2)} \to \mathbb{R}^2_{(y_1, y_2)}, \quad \quad (y_1, y_2) = F(u_1, u_2),
\end{align*}
where $U$ is the interior of the convex polytope with vertices $p_1,\ldots, p_n$. 

The projection of $L_{\text{red}}$ to $\mathbb{R}^2_{(y_1,y_2)}$ is expected to be a thickening of the union of the $n$ rays, whose shape resembles an amoeba, as illustrated below in Figure \ref{DSC}. We will study these shapes more in Section \ref{soolutions-near-vertices}.
\begin{figure}[H]  
\includegraphics[width=9cm]{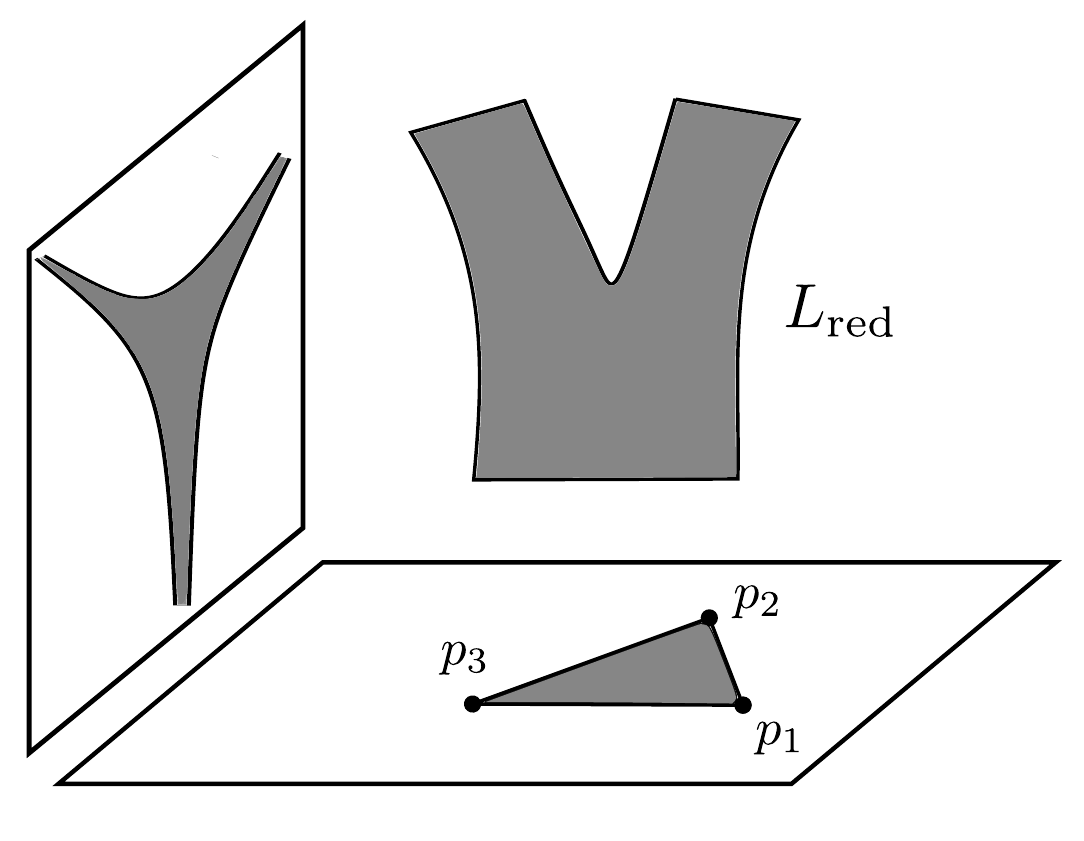}
\centering
  \caption{$L_{\text{red}}$ in $Z_{\text{red}}= \mathbb{R}^4$, in the case $n=3$.}\label{DSC}
\end{figure}
The equation $du_1 \wedge dy_1 + du_2 \wedge dy_2 = 0$ is equivalent to $\frac{\partial y_2}{\partial u_1} =  \frac {\partial y_1}{\partial u_2}$. This implies that we can define a function $\varphi: U \to \mathbb{R}$ such that $F = \nabla \varphi$, namely
\begin{align*}
    y_1 = \frac{\partial \varphi}{\partial u_1}, \quad \quad 
    y_2 =  \frac{\partial \varphi}{\partial u_2}.
\end{align*}
The equation $V du_1 \wedge du_2 - dy_1 \wedge dy_2 = 0$ reduces to the following real Monge-Amp\`ere equation for $\varphi$. 
\begin{align}\label{MA0}
    \det D^2 \varphi = V=A+\sum_{i=1}^n  \frac{1}{2|u-p_i|}.
\end{align}
Note that $V$ has singularities at the vertices of the convex polygon $U$. 

\textbf{Dirichlet boundary condition.}
We now use the expected asymptote of the special Lagrangian $L$ to heuristically motivate the Dirichlet boundary condition on the real Monge-Amp\`ere equation. The rest of the paper will start from the Dirichlet problem and construct the conjectured special Lagrangian $L$. In Section \ref{Asymptotic-behavior}, we will see that the Dirichlet boundary condition results in the correct asymptotic behavior. 

Notice that the vector $\tilde{v}_i\in \mathbb{R}^2$ is normal to the edge $[p_i, p_{i+1}]$. Since $L_{\text{red}}$ is the graph of $\nabla \varphi$ and its asymptotic at infinity is the union of $L_i$, we expect that when $u \in U$ approaches the open line segment $(p_i, p_{i+1})$ on the boundary of the convex polygon, the normal derivative tends to infinity, while the tangential derivative tends to a constant, $\nabla\varphi \cdot (p_{i+1}-p_i)\to c_i$. Since $\sum c_i=0$, we can write $c_i= b_{i+1}-b_{i}$, and notice that adding a global constant to $b_i$ will be inconsequential. 
Thus the boundary value of $\varphi$ on the edge is expected to be \emph{affine linear}, namely
\begin{equation}\label{boundarydata}
\varphi(p_i)=b_i, \quad 
\varphi(tp_i+ (1-t)p_{i+1})= t b_i+ (1-t)b_{i+1}, \quad \forall t\in [0,1].
\end{equation}

\begin{remark}\label{standardposition}
The real Monge-Amp\`ere equation is invariant under adding an affine linear function. Geometrically, adding a constant to $\varphi$ has no effect on the special Lagrangian $L$, and adding a linear function amounts to translating $L$ along some vector in $\mathbb{R}_{(y_1,y_2)}^2$. In particular, for the triangle case $n=3$, we can reduce to the zero boundary data.  

The real Monge-Amp\`ere equation is also invariant under the Euclidean motion of the convex polygon $U$, with the corresponding change to $V$. Later, when we analyze the local behavior of $\varphi$ near an open edge, we will sometimes reduce to the `standard position' where the open edge is contained in the $u_2$-axis, and $U$ lies inside the right half-plane, to simplify the notations.
\end{remark}

\subsection{Solving the Dirichlet problem}\label{RMAequation}

\begin{theorem}
[Dirichlet problem]
Let $b_1, \ldots, b_n \in \mathbb{R}$. There is a unique continuous convex function $\varphi: \overline{U} \to \mathbb{R}$, with boundary data (\ref{boundarydata}), which is smooth in $U$ and solves the real Monge-Amp\`ere equation (\ref{MA0}).
\end{theorem}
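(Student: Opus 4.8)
The plan is to solve the Dirichlet problem for the real Monge-Amp\`ere equation $\det D^2\varphi = V$ on the convex polygon $\overline U$ with the affine-linear boundary data (\ref{boundarydata}) by the standard method of approximation, combining the Alexandrov/Rauch-Taylor theory of generalized (weak) solutions with interior regularity from the work of Caffarelli and Pogorelov, and then dealing with the mild singularities of $V$ at the vertices separately. The right-hand side $V = A + \sum_i \frac{1}{2|u-p_i|}$ is continuous and strictly positive on $\overline U \setminus \{p_1,\dots,p_n\}$, is locally bounded below by a positive constant everywhere, and is $L^1_{\mathrm{loc}}$ — indeed $L^p$ for all $p<2$ — near each vertex, so it defines a finite Borel measure $V\,du$ on $\overline U$ with a continuous density away from finitely many points.

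First I would address existence of a generalized solution. Since the boundary data extend to a convex (in fact affine on each edge) function on $\partial U$ and $U$ is convex and bounded, the Perron method of Alexandrov applies: one takes $\varphi$ to be the upper envelope of all convex subsolutions (convex functions $\le$ the given data on $\partial U$ whose Monge-Amp\`ere measure dominates $V\,du$), or alternatively one approximates $V$ by a sequence of smooth strictly positive functions $V_\varepsilon$ (e.g.\ truncating the singularities, $V_\varepsilon = A + \sum_i \frac{1}{2\max(|u-p_i|,\varepsilon)}$) with $V_\varepsilon \nearrow V$, solving each regularized problem. For fixed smooth strictly positive $V_\varepsilon$ bounded above and below, the classical Monge-Amp\`ere theory on a convex domain with continuous boundary data gives a unique Alexandrov solution $\varphi_\varepsilon$; interior $C^\infty$ regularity follows from Caffarelli's and Pogorelov's estimates. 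Uniform control: all $\varphi_\varepsilon$ are convex with the same boundary values, hence uniformly bounded, uniformly Lipschitz on compact subsets of $U$, and equicontinuous up to the boundary (using affine-linear barriers built from the edges — on the standard position described in Remark \ref{standardposition}, $\pm C u_1$-type barriers control the modulus of continuity near an open edge, and near a vertex the affine data on the two adjacent edges squeeze $\varphi$). By the comparison principle the $\varphi_\varepsilon$ are monotone in $\varepsilon$, so they converge locally uniformly on $\overline U$ to a convex limit $\varphi$ with the prescribed boundary data, and weak convergence of the Monge-Amp\`ere measures (continuity of the operator under uniform convergence of convex functions, Alexandrov) gives $\det D^2\varphi = V\,du$ as measures on $U$.

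Next, interior smoothness: on any compact $K \subset U \setminus \{p_1,\dots,p_n\}$ the density $V$ is smooth and strictly positive, so Caffarelli's regularity theory (strict convexity plus $C^{2,\alpha}$ interior estimates, then bootstrap) upgrades $\varphi$ to $C^\infty$ there; strict convexity of $\varphi$ in $U$ — needed to rule out line segments in the graph, which would violate $\det D^2\varphi = V > 0$ — follows because a generalized solution with a segment in its graph has vanishing Monge-Amp\`ere mass transverse to the segment, contradicting the positive lower bound on $V$. The only genuinely delicate point is smoothness at the vertices $p_i$ themselves, where $V$ blows up like $|u-p_i|^{-1}$; here I would argue that $V \in L^p_{\mathrm{loc}}$ for $p$ slightly larger than $1$ suffices for Caffarelli's $W^{2,p}$ and $C^{1,\alpha}$ theory to give that $\varphi$ is $C^{1,\alpha}$ across the vertex and twice differentiable a.e., which is all that is claimed (the theorem asserts smoothness only in the open set $U$, and one checks $V$ is in fact continuous — though unbounded — so this is consistent; if true $C^\infty$ smoothness at vertices is wanted it would require the more refined elliptic theory with $|u-p_i|^{-1}$ weights, but as stated the theorem only needs smoothness on $U$ where $V$ is smooth). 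Finally, uniqueness is immediate from the Alexandrov comparison principle for the Monge-Amp\`ere measure: two convex solutions with the same boundary data and the same Monge-Amp\`ere measure coincide. The main obstacle is establishing the equicontinuity/barrier estimates uniformly up to the boundary near the vertices, where two edges meet and $V$ is singular simultaneously — this is where the convexity of $U$ and the affine-linear (hence compatible-at-corners) nature of the boundary data are essential.
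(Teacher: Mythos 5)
Your overall strategy (weak Alexandrov solutions via approximation, interior regularity from Caffarelli-type theory, uniqueness by comparison) is the same as the paper's, but there are two concrete gaps, and the first one is precisely the point the paper's proof is organized around.

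First, you approximate the right-hand side $V$ but not the domain, and then assert that ``classical Monge-Amp\`ere theory on a convex domain with continuous boundary data gives a unique Alexandrov solution'' for each regularized problem. That is not a standard theorem when the domain is a polygon: the classical existence results for the Dirichlet problem (Alexandrov, Rauch--Taylor) require the domain to be \emph{strictly} convex, and for a domain with flat boundary pieces the problem is solvable only for boundary data that is convex on each flat piece, which still requires an additional argument. The paper's Step 1 exists exactly to handle this: it exhausts $U$ by smooth strictly convex subdomains $U_t$, applies Rauch--Taylor on each $U_t$ (which accepts any finite Borel measure, so no regularization of $V$ is needed --- note also that the singularities of $V$ sit at the \emph{vertices} of $\partial U$, not in $U$, which also makes your worry about smoothness at the $p_i$ moot, as you eventually observe), and then passes to the limit $t\to 1$. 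Your regularization of $V$ is unnecessary, and the approximation that is actually needed is missing; to repair your argument you would either have to introduce the domain exhaustion or invoke a result on the Dirichlet problem in non-strictly-convex domains and verify its hypotheses for the affine edge data.

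Second, your boundary control from below is not justified as stated. An affine function $\ell$ has Monge-Amp\`ere measure zero, so the comparison principle (larger Monge-Amp\`ere measure implies smaller function for equal boundary values) runs in the wrong direction for $\ell$ to serve as a \emph{lower} barrier; ``$-Cu_1$-type barriers'' cannot give $\varphi\geq \bar{\phi}-C\,\mathrm{dist}(u,\partial U)$. Indeed the true lower bound is only H\"older: the paper obtains $\varphi\geq \bar{\phi}-C\,\mathrm{dist}(u,\partial U)^{1/2}$ from the Alexandrov maximum principle, using the uniform $L^1$ bound $\int_U V<\infty$, and this square-root modulus is what propagates to the limit and yields continuity up to $\partial U$. (Your interior smoothness step is fine: in dimension two, strict convexity follows from the positive lower bound on $V$ by the classical Alexandrov--Heinz result, after which Caffarelli's interior estimates apply; the paper instead combines Caffarelli's propagation of singularities with Mooney's partial regularity, but both routes work.)
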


The remainder of this section is dedicated to the proof of this theorem. We use an approximation strategy to deal with the failure of strict convexity of the domain.

\begin{proof}
    \textbf{Step 1 (Approximate solutions).} 
   We take a smooth 1-parameter expanding family of strictly convex smooth domains $U_t\subset U$, where $t \in (0,1)$, converging to $U_1 = U$ as $t\to 1$, as in Figure \ref{domains}.
\begin{figure}[H]
\includegraphics[width=9cm]{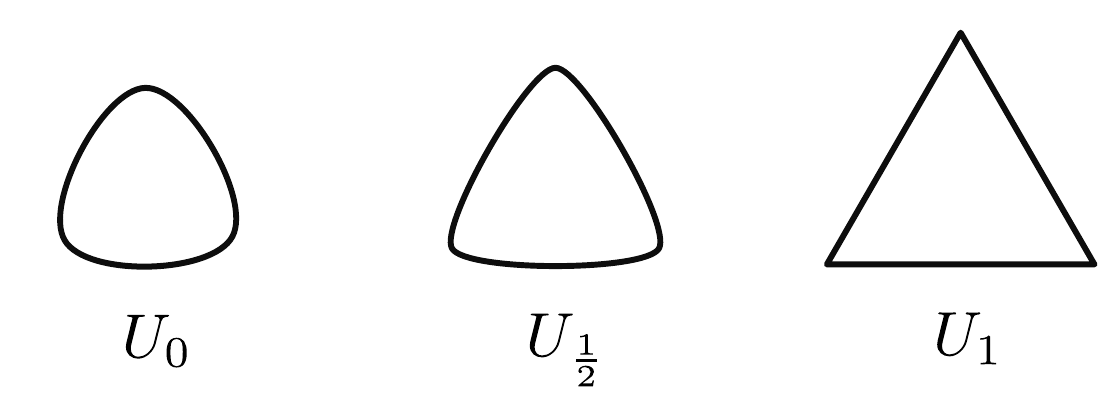}
\centering
  \caption{Approximating domains $U_t \to U$, in the case $n=3$.}
  \label{domains}
\end{figure}
The piecewise linear boundary data (\ref{boundarydata}) can be extended to some Lipschitz function $\bar{\phi}$ on $\overline{U}$. We consider the following Dirichlet problem for $\varphi_t : U_t \to \mathbb{R}$ for each $t \in (0,1)$.
\begin{equation}\label{Dir2}
\begin{cases}
\begin{alignedat}{3}
\det D^2 \varphi_t &= V, \quad &&\text{ on } && \quad U_t,\\
\varphi_t &= \bar{\phi}, &&\text{ on } && \quad \partial U_t.
\end{alignedat}
\end{cases}
\end{equation}

\begin{lemma}[Rauch-Taylor \cite{MR0454331}]\label{Rauch-Taylor-M-A}
Let $\Omega \subset \mathbb{R}^2$ be a strictly convex domain, $g: \partial \Omega \to \mathbb{R}$ a continuous function, and $\mu$ a non-negative Borel measure on $\Omega$ with $\mu(\Omega) < \infty$. Then, there exists a unique convex function $f \in C(\overline{\Omega})$ such that
\begin{equation}
\begin{cases}
\begin{alignedat}{3}
\det D^2 f &= \mu \quad &&\text{ on } && \quad \Omega,\\
f &= g, \quad &&\text{ on } && \quad \partial \Omega.
\end{alignedat}
\end{cases}
\end{equation}
\end{lemma}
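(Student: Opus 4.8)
The plan is to obtain $\varphi$ as a limit of the approximate solutions $\varphi_t$ from Step 1, then to extract uniform estimates, interior regularity, and the correct boundary values. First I note that the right-hand side defines a \emph{finite} measure: since $1/|u-p_i|$ is locally integrable in $\mathbb{R}^2$, the density $V$ lies in $L^1(U)$ and $\mu := V\,du_1\,du_2$ is a non-negative Borel measure with $\mu(U)<\infty$. Hence Lemma \ref{Rauch-Taylor-M-A} applies verbatim on each strictly convex smooth domain $U_t$ and yields a unique convex $\varphi_t\in C(\overline{U_t})$ solving (\ref{Dir2}). A key structural point is that the singular points $p_i$ are \emph{vertices}, hence lie on $\partial U$, outside every $\overline{U_t}$; so on each fixed $U_t$ the density $V$ is smooth and pinched between two positive constants, and the singularity only enters in the limit $t\to 1$.

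Second, I would produce a priori bounds on $\varphi_t$ uniform in $t$ by constructing global barriers and invoking the comparison principle for Alexandrov solutions (if $u,v$ are convex with $\det D^2 u\le \det D^2 v$ in a domain and $u\ge v$ on its boundary, then $u\ge v$ inside). For the upper bound, any affine function $\bar w$ with $\bar w(p_i)\ge b_i$ satisfies $\det D^2\bar w=0\le V$ and dominates the edgewise-affine data, giving $\varphi_t\le\bar w$. For the lower bound I exploit the Minkowski determinant (superadditivity) inequality for convex functions and set $\underline w=\sum_i g_i(|u-p_i|)+\tfrac{\sqrt A}{2}|u|^2+(\text{affine})$, where the radial profile solves $g'g''=\tfrac12$, so $g_i(r)=\tfrac{\sqrt2}{3}r^{3/2}$ has absolutely continuous Monge-Amp\`ere measure with density exactly $1/(2r)$ and is continuous and bounded down to $r=0$; this forces $\det D^2\underline w\ge V$, and after an affine shift $\underline w\le\bar\phi$ on $\partial U$, comparison gives $\varphi_t\ge\underline w$. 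Trapping the convex functions $\varphi_t$ between two fixed bounds yields local Lipschitz equicontinuity, so a subsequence converges locally uniformly to a convex $\varphi$ on $U$. By the weak stability of Monge-Amp\`ere measures under local uniform convergence of convex functions, $\det D^2\varphi=V$ in the Alexandrov sense; and since $V$ is smooth with $0<c_K\le V\le C_K$ on every compact $K\subset U$, Caffarelli's interior theory gives strict convexity and $\varphi\in C^\infty(U)$, a classical solution of (\ref{MA0}).

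The main obstacle is the \emph{boundary behavior}: showing $\varphi$ extends continuously to $\overline U$ with the prescribed affine data, despite the domain being merely convex (flat edges, no strict convexity) and despite $V$ blowing up at the corners. Away from the vertices, I would place an open edge in the standard position of Remark \ref{standardposition} and squeeze $\varphi$ between a local affine upper barrier and a localized lower barrier, both matching the affine boundary value; since both the data and the barriers are affine along the edge, continuity up to the open edge follows. At a vertex $p_i$ the density is unbounded, and here I would use the finiteness of $\mu$ together with a \emph{localized} version of the radial barrier $g_i$ above, which is continuous at $r=0$ with value $b_i$: combined with the affine upper barrier it pins $\varphi(p_i)=b_i$ and rules out a boundary layer. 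Reconciling the unbounded density with continuous attainment of the corner data is precisely where the $L^1$ bound on $V$ and the sharp $r^{3/2}$ barrier are essential.

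Finally, uniqueness is immediate from the same comparison principle: two continuous convex solutions with identical Monge-Amp\`ere measure $V$ and identical boundary data must coincide by the Alexandrov maximum principle. Collecting the existence of the limit $\varphi$, its convexity and continuity on $\overline U$, the boundary identification, the interior smoothness, and uniqueness completes the proof.
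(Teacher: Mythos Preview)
Your proposal does not address the stated lemma. Lemma~\ref{Rauch-Taylor-M-A} is the Rauch--Taylor existence result for the Monge--Amp\`ere Dirichlet problem on a \emph{strictly convex} domain with finite Borel right-hand side; the paper quotes it as a black box from \cite{MR0454331} and offers no proof. What you have written is instead a sketch of the surrounding \emph{Dirichlet problem theorem} on the polygon $U$: you explicitly invoke Lemma~\ref{Rauch-Taylor-M-A} as a tool, and you refer to $\varphi_t$, $U_t$, (\ref{Dir2}), (\ref{MA0}), the vertex data $b_i$, and Remark~\ref{standardposition}, none of which appear in the statement you were asked to prove.

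If the Dirichlet theorem on $U$ was your intended target, your outline follows the paper's overall architecture (approximate on strictly convex $U_t$, extract a limit, establish boundary continuity, interior smoothness, uniqueness) but diverges at the key lower-bound step. The paper gets $\varphi_t\ge\bar\phi-C\,\mathrm{dist}(\cdot,\partial U)^{1/2}$ in one line from the \emph{Alexandrov estimate}, using only $\int_U V<\infty$; this simultaneously delivers the boundary continuity estimate (\ref{boundarycontinuity}). You instead construct an explicit global subsolution $\underline w=\sum_i g_i(|u-p_i|)+\tfrac{\sqrt A}{2}|u|^2+(\text{affine})$ via the Minkowski determinant inequality. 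This is a legitimate alternative yielding the same $r^{3/2}$ modulus at each vertex (your coefficient $\tfrac{\sqrt 2}{3}$ should be $\tfrac{2}{3}$, since $g'g''=\tfrac12$ integrates to $g'=\sqrt r$), but it exploits the specific Green's-function structure of $V$, whereas the paper's Alexandrov route needs nothing beyond $V\in L^1$. For interior smoothness the paper pairs Caffarelli's singularity-propagation with Mooney's partial regularity to force strict convexity first; your direct appeal to ``Caffarelli's interior theory'' skips this, though in dimension two it can be justified.
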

In our case, for every $t \in (0,1)$, we let $\Omega = U_t$ and define $\mu=Vdu_1du_2$. Notice here that $V$ is strictly positive within $U_t$ and uniformly bounded in $L^1$,
\begin{align*}
    \int_{U_t} V du_1 du_2 \leq  \int_{U} V du_1 du_2 <+\infty, \quad \text{ for all } \quad t \in (0,1).
\end{align*}
Therefore, by Lemma \ref{Rauch-Taylor-M-A}, there is a unique convex function $\varphi_t \in C(\overline{U}_t)$ satisfying (\ref{Dir2}).

\textbf{Step 2 (Uniform bounds and the limit).} By the convexity of $\varphi_t$, and the Lipschitz property of $\bar{\phi}$, we obtain the uniform upper bound
\[
\varphi_t(u) \leq \bar{\phi}(u)+ C \text{dist}(u, \partial U_t),\quad \forall u\in U_t.
\]

We now derive a uniform lower bound.

\begin{lemma}\label{Lemma}
There is a uniform constant independent of $t$, such that 
\[
\varphi_t (u) \geq  \bar{\phi}(u)- C \text{dist}(u, \partial U)^{1/2},\quad \forall u\in U_t.
\]
\end{lemma}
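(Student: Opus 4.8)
The plan is to construct an explicit convex subsolution (barrier from below) $\psi$ on $U$, independent of $t$, with $\det D^2\psi \geq V$ in $U$ and $\psi = \bar\phi$ on $\partial U$, and then invoke the comparison principle for the Monge--Ampère operator. Since $\varphi_t$ solves $\det D^2\varphi_t = V$ on $U_t$ with $\varphi_t = \bar\phi \geq \psi$ on $\partial U_t$ (the latter because $\psi$ is convex and agrees with the affine-linear data on $\partial U$, hence lies below its own boundary-secant extension), the comparison principle gives $\varphi_t \geq \psi$ on $U_t$ for every $t$. So the whole problem reduces to exhibiting $\psi$ with the claimed modulus of continuity $\psi(u) \geq \bar\phi(u) - C\,\mathrm{dist}(u,\partial U)^{1/2}$ near the boundary. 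Note $V$ is bounded above on any compact subset of $U$ away from the vertices, and near a vertex $p_i$ it blows up only like $\tfrac{1}{2|u-p_i|}$, which is still in $L^1$ and, more importantly, is dominated by the Monge--Ampère measure of a function with a mild conical singularity — this is what makes the $1/2$-Hölder (rather than Lipschitz) bound both necessary and attainable.

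The key steps are as follows. First, reduce via Remark \ref{standardposition} to a normalization where the relevant edge lies on the $u_2$-axis and $U \subset \{u_1 > 0\}$, and subtract the affine-linear boundary data so that $\bar\phi \equiv 0$ on that edge; it suffices to build a \emph{local} barrier near each open edge and near each vertex and then patch (taking the max of finitely many convex subsolutions is again a convex subsolution, and the max with a large constant handles the interior). Second, near an open edge, try $\psi = -C_1 u_1^{1/2} + (\text{lower order, linear})$: a short computation gives $\det D^2(-C_1 u_1^{1/2}) = 0$, so this alone is only a subsolution in the degenerate sense; instead take something like $\psi = -C_1 u_1^{1/2} + \tfrac{1}{2}\varepsilon(u_1^2 + u_2^2)$ or, better, $\psi = -C_1(u_1^{1/2} - \tfrac{1}{2C_1^2}u_1 \cdot(\text{const}))$ tuned so that the mixed Hessian terms produce $\det D^2\psi \geq \sup_{U}(V - \text{vertex part})$, which is finite. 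Third, near a vertex $p_i$, use a rotationally symmetric model: for a radial convex function $\psi(r)$ with $r = |u - p_i|$ one has $\det D^2\psi = \psi''(r)\,\psi'(r)/r$, so solving $\psi''\psi'/r = \tfrac{1}{2r}$, i.e. $(\tfrac12(\psi')^2)' = \tfrac12$, gives $\psi'(r) = \sqrt{r + c}$ and $\psi(r) \sim \tfrac{2}{3}r^{3/2}$, which is $C^{1,1/2}$ and has Monge--Ampère density exactly matching the singular part of $V$; adding a small multiple of $|u-p_i|^2$ absorbs the bounded remainder $A + \sum_{j\neq i}\tfrac{1}{2|u-p_j|}$ near $p_i$. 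Fourth, patch these local barriers together with the interior constant barrier by taking a maximum, check the boundary values still match $\bar\phi$ (using convexity of the pieces and affine-linearity of the data on each edge), and read off the stated estimate from the explicit $r^{1/2}$ and $r^{3/2}$ growth.

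The main obstacle I anticipate is the interaction between the edge barrier and the vertex barrier at the corners of the polygon, where both the edge degeneracy (gradient blowing up normal to the edge) and the vertex singularity of $V$ are present simultaneously; one must make sure the conical vertex model dominates a neighborhood of the vertex that is large enough to overlap the edge models on the two adjacent edges, and that the max-patching does not violate the subsolution property (it does not, since the maximum of convex subsolutions of the Monge--Ampère equation is again a convex subsolution in the Alexandrov/viscosity sense, but this requires a clean citation or a one-line argument). A secondary point is verifying $\psi \leq \bar\phi$ on all of $\partial U$ rather than just on the edge being analyzed — here one uses that each local piece is convex and equals the affine data at the two endpoints of its edge, so it lies below the piecewise-linear graph, and the global maximum of the pieces still does. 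Once the barrier is in place, the comparison principle (valid for Alexandrov solutions with $L^1$ right-hand side on convex domains, e.g. Gutiérrez's book) closes the argument uniformly in $t$.
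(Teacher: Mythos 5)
Your strategy (build an explicit convex subsolution and invoke the comparison principle) is a genuinely different route from the paper's. The paper's proof is much shorter: after reducing to standard position and replacing $\varphi_t$ by $\varphi_t+C_1u_1$ so that the boundary data is non-negative, it applies the Alexandrov maximum principle, $\max(0,-f(u))^2\leq C\,\mathrm{diam}(U_t)\,\mathrm{dist}(u,\partial U_t)\int_{U_t}V$, and the $\mathrm{dist}^{1/2}$ modulus falls out immediately from the factor $\mathrm{dist}(u,\partial U_t)\leq u_1$ together with the uniform bound $\int_U V<\infty$. In particular the singularity of $V$ at the vertices costs nothing because only the $L^1$-norm of $V$ enters; no barrier near the vertices is needed at all. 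Your route can be made to work, and your radial computation is correct ($\tfrac23 r^{3/2}$ has Monge--Amp\`ere density exactly $\tfrac1{2r}$, and the $\det(A+B)\geq\det A+\det B$ trick for positive semidefinite $2\times2$ matrices absorbs the bounded remainder of $V$), but it is considerably longer and, as written, has gaps.

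The concrete gaps are these. First, your vertex barrier is an affine function $\ell$ plus an \emph{increasing} radial function (any radial convex function centered at $p_i$ must be non-decreasing in $r$, since the tangential Hessian eigenvalue is $\psi'(r)/r$); hence $\psi\geq\ell$ everywhere, and if $\ell$ merely touches the piecewise-linear data at $p_i$ while agreeing with it along an adjacent edge, then $\psi>\bar\phi$ on that edge and the barrier is inadmissible. You must choose $\ell$ to support the boundary data \emph{strictly} at $p_i$, with a gap $\geq c\,|u-p_i|$ along both adjacent edges (such an $\ell$ exists because $p_i$ is an extreme point of the polygon, e.g. add a large multiple of a linear function vanishing at $p_i$ and negative on $\overline U\setminus\{p_i\}$); this is exactly the corner interaction you flag but do not resolve. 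Second, your justification that $\psi\leq\varphi_t=\bar\phi$ on $\partial U_t$ is not correct as stated: a convex function lying below the data on $\partial U$ lies below the \emph{concave envelope} of that data in the interior, not below an arbitrary Lipschitz extension $\bar\phi$; you should either take $\bar\phi$ to be the concave envelope or note that the discrepancy is $O(\mathrm{dist}(u,\partial U))$ and absorb it into $C\,\mathrm{dist}(u,\partial U)^{1/2}$. Third, the facts that the maximum of finitely many Alexandrov subsolutions is a subsolution, and that the comparison principle holds for Alexandrov solutions with $L^1$ right-hand side, do need the citation you mention (they are standard, e.g. in Guti\'errez), but they are genuinely used. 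None of these is fatal, but each must be addressed before the barrier argument closes; the Alexandrov-estimate route avoids all of them.
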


\begin{proof} 
As in Remark \ref{standardposition}, we put $U$ into the standard position, namely that the boundary edge of interest $[p_i,p_{i+1}]$ lies on the $u_2$-axis, and $U$ lies in the right half-plane, as in Figure \ref{rectangle}, and without loss of generality, $\bar{\phi}=0$ on this boundary edge. It suffices to show $\varphi_t(u)\geq -Cu_1^{1/2}$ for some constant $C$ independent of $t$.

By the Lipschitz property of $\bar{\phi}$, we know $|\bar{\phi}(u)|\leq C_1 u_1$ on $U$, for some constant $C_1$. The boundary data of $\varphi_t+C_1u_1$ is non-negative, and it solves the same real Monge-Amp\`ere equation (\ref{MA0}).

By the Alexandrov estimate,
\[
\max(0, - \varphi_t(u))^2 \leq C_2 \text{diam}(U_t) \cdot \text{dist} (u, \partial U_t)  \cdot \int_{U_t} V du_1 du_2 \leq C_3 u_1, \quad \text{ for all } \quad u \in \overline{U}_t,
\]
as required.
\end{proof}

Combining the upper and lower bounds, we obtain a uniform bound on the $C^0$-norm of $\varphi_t$. By the convexity of $\varphi_t$, this implies a uniform Lipschitz bound on any fixed compact subset of $U$, as $t\to 1$. By Arzel\`a-Ascoli, we can extract a subsequence of $\varphi_t$ which $C^0$-converges to some continuous function $\varphi$ on any compact subset of $U$, which is a viscosity solution to the real Monge-Amp\`ere equation. By passing the upper and lower bounds to the limit as $t\to 1$, we obtain
\begin{equation}\label{boundarycontinuity}
\bar{\phi}(u)- C\text{dist}(u, \partial U)^{1/2} \leq \varphi(u)\leq \bar{\phi}(u)+ C\text{dist}(u, \partial U). 
\end{equation}
Thus $\varphi$ extends continuously to $\overline{U}$ and achieves the boundary data (\ref{boundarydata}), which agrees with $\bar{\phi}$ on $\partial U$. The uniqueness of the solution is a standard consequence of the maximum principle.

\textbf{Step 3 (Interior smoothness).} 
Notice $V$ is smooth and strictly positive in $U$.
As a standard fact about real Monge-Amp\`ere equation in dimension two, the solution $\varphi$ to the real Monge-Amp\`ere equation is smooth in the interior domain $U$. This fact is the consequence of two standard results: Caffarelli \cite{MR1038359} proved that the singular set must propagate to the boundary along some line segment, while Mooney's partial regularity \cite{MR3340380} showed that the singular set has codimension one Hausdorff measure zero. 
\end{proof}

\subsection{Gradient divergence near the edges}

In this section, we study the behavior of $\varphi$ near an open edge of the convex polygon $U$. The following theorem is essential in proving the smoothness of the special Lagrangian $L$.

\begin{proposition}\label{prop:gradientdivergence}
\label{blowup}
Let $u_*$ be any point on a boundary edge $(p_i, p_{i+1})$ in $\partial U\setminus \{ p_1,\ldots, p_n\}$. Then, as $u\in U$ tends to $u_*$, the normal and tangential gradient components satisfy
\begin{alignat*}{3}
        \nabla^{\text{normal}} \varphi (u) &\to +\infty, 
        \quad
         \nabla \varphi (u)\cdot (p_{i+1}-p_i) &\to c_i.
\end{alignat*}
\end{proposition}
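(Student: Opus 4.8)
The plan is to establish the two claims separately, but both rely on understanding $\varphi$ through its sub-level sets and the Monge-Amp\`ere measure near the edge. Put $U$ in the standard position of Remark \ref{standardposition}, so the edge $(p_i,p_{i+1})$ lies on the $u_2$-axis, $U$ lies in the right half-plane $\{u_1>0\}$, and by subtracting an affine function we may assume the boundary data on this edge is $\varphi\equiv 0$, while the tangential slope we want to track is encoded by $c_i$ after undoing the normalization. From the boundary-continuity estimate (\ref{boundarycontinuity}) we already know $-Cu_1^{1/2}\le \varphi(u)\le Cu_1$ near the edge, so $\varphi\to 0$ as $u_1\to 0$; the content of the proposition is the \emph{rate} and \emph{direction} of the gradient blow-up.

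For the normal blow-up $\partial_{u_1}\varphi\to +\infty$, I would argue by contradiction using convexity: if along some sequence $u^{(k)}\to u_*$ the normal derivative stayed bounded, then convexity of $\varphi$ (the graph lies above each supporting hyperplane, and below chords) would force $\varphi$ to be comparable to a linear function in the $u_1$-direction near $u_*$, i.e. $\varphi(u)\ge -Mu_1$ there; combined with the upper bound this traps $\varphi$ in a thin cone. Then the image $\nabla\varphi(B)$ of a small half-disk $B\subset U$ around $u_*$ would be contained in a bounded region of $\mathbb{R}^2$, so $\int_B \det D^2\varphi = |\nabla\varphi(B)|$ (the Monge-Amp\`ere measure) would be bounded. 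But $\int_B \det D^2\varphi = \int_B V\,du_1du_2$, and since $V\ge \tfrac{1}{2|u-p_i|}\sim \tfrac{1}{2|u-p_{i+1}|}$ is only mildly singular at the vertices and bounded below by a positive constant on the interior of the edge, this integral is finite — so that alone is not yet a contradiction. The fix is to localize more carefully: the subgradient image of a half-disk touching the \emph{free} boundary edge where $\varphi=0$ must, by convexity, contain a full half-line's worth of normal slopes unless $\varphi$ detaches from its boundary values at a definite rate. Concretely, I would use the Alexandrov maximum principle in the reverse direction together with the known lower barrier: compare $\varphi$ from below on a small rectangle with an explicit convex sub-solution of $\det D^2 w = V$ whose normal derivative on the edge is $-\infty$ — for instance a function of the form $w(u_1,u_2) = -c\, u_1^{1/2}\cdot g(u_2)$ suitably modified so that $\det D^2 w \le V$, which is possible because $u_1^{1/2}$ has $\partial_{u_1}w\to-\infty$ and the Hessian determinant of such a profile is $\sim u_1^{-1}$, matching the lower bound $V\gtrsim 1$ only after rescaling — and conclude $\varphi\le w$, hence $\partial_{u_1}\varphi\to-\infty$ in the downward (interior-pointing) normal, i.e. the inward normal derivative $\nabla^{\text{normal}}\varphi\to+\infty$. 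The matching upper barrier $\varphi\ge -Cu_1^{1/2}$ we already have from Lemma \ref{Lemma}, so the normal derivative is squeezed to $+\infty$ at a $u_1^{-1/2}$ rate.

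For the tangential statement $\nabla\varphi\cdot(p_{i+1}-p_i)\to c_i$, recall $c_i=b_{i+1}-b_i$ records the difference of boundary values at the two endpoints of the edge, and that $\varphi$ restricted to the edge is exactly the affine interpolation. Since $\varphi$ is continuous up to $\overline U$ with these boundary values, for points $u\in U$ near an interior point $u_*$ of the edge the difference quotients of $\varphi$ in the tangential direction converge to the tangential derivative of the (affine) boundary restriction, which is the constant $c_i/|p_{i+1}-p_i|$ in the unit tangential direction; the only thing to check is that the tangential component of $\nabla\varphi$ — a priori defined only in the open set $U$ — actually converges rather than oscillating, which follows because $\varphi$ is convex, so its gradient has bounded variation and one-sided limits exist along the edge, and the continuity of $\varphi|_{\partial U}$ pins the limit down. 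I expect the main obstacle to be the construction of the explicit lower barrier with a genuinely divergent normal derivative that still satisfies $\det D^2 w\le V$ uniformly up to the edge and near (but not at) the vertices — balancing the $u_1^{-1/2}$ singularity of the profile against the positivity of $V$ requires care, and one must ensure the barrier can be placed inside $U$ on a neighborhood of $u_*$ with the right boundary comparison. Once the correct barrier is in hand, both conclusions follow by the comparison principle for Monge-Amp\`ere plus the convexity structure.
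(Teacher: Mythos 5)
Your tangential argument is essentially the paper's: convexity, the affine boundary data, and the boundary continuity estimate (\ref{boundarycontinuity}), applied along segments parallel to the edge, pin down the tangential limit. The normal part, however, has a genuine gap. You correctly observe that ``bounded gradient image $\Rightarrow$ bounded Monge-Amp\`ere mass'' yields no contradiction since $\int_U V<\infty$, but the proposed fix --- an explicit barrier $w=-c\,u_1^{1/2}g(u_2)$ with $\det D^2w\le V$ and $\varphi\le w$ --- does not work as described, for two reasons. First, the inequality goes the wrong way: for such a profile $\det D^2w=-\tfrac{c^2}{8}u_1^{-1}(g^2)''\sim u_1^{-1}\to\infty$, so near the edge $\det D^2w\gg V$; the Monge-Amp\`ere comparison principle ($\det D^2u\ge\det D^2v$ in the domain and $u\le v$ on its boundary imply $u\le v$ inside) then yields $w\le\varphi$, i.e.\ a \emph{lower} barrier --- which is exactly the content of Lemma \ref{Lemma} and says nothing about divergence of $\partial_{u_1}\varphi$. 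No rescaling reconciles $u_1^{-1}$ with $V\sim 1$. Second, to run an \emph{upper} barrier $\varphi\le w$ you would need $\det D^2w\le V$ (forcing $w$ to have at most a mildly divergent normal derivative) and, worse, you would need $\varphi\le w<0$ on the lateral sides of the comparison rectangle all the way down to the edge; that is equivalent to the normal derivative already diverging at nearby boundary points, i.e.\ circular. The known bounds $-Cu_1^{1/2}\le\varphi\le Cu_1$ do not supply the required strict negativity.

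The paper closes this gap by a measure comparison on thin rectangles rather than a barrier. If some subgradient $(-\Lambda,0)$ existed at a point of the open edge, let $\Lambda_0$ be the infimal such $\Lambda$; then $\varphi\ge-\Lambda_0u_1$ on all of $\overline{U}$, so $(-\Lambda_0,0)$ is a subgradient at \emph{every} point of the open edge and $\partial_{u_1}\varphi\ge-\Lambda_0$ throughout $U$. The two-sided bound $-\Lambda_0u_1\le\varphi\le Cu_1$ forces $|\partial_{u_2}\varphi|\le C(\Lambda_0,h)\,u_1$ on the thin rectangle $R(u_*,h,\varepsilon)$, so the gradient image of the rectangle has area at most $C(\Lambda_0,h)\,\varepsilon\,\sup_R(\partial_{u_1}\varphi+\Lambda_0)$, while the equation forces this area to be at least $C^{-1}h\varepsilon$. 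Hence $\sup_R(\partial_{u_1}\varphi+\Lambda_0)$ is bounded below uniformly in $\varepsilon$, and letting $\varepsilon\to0$ produces a subgradient $(-\Lambda,0)$ at a point of the open edge with $\Lambda$ strictly smaller than $\Lambda_0$ --- contradicting minimality. Any repair of your barrier strategy would need this kind of localization anyway, so I would recommend adopting the thin-rectangle argument.
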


\begin{proof}\label{blowupproof}
The tangential component converges to a constant by the convexity of $\varphi$, the affine linearity of the boundary data, and the boundary continuity estimate (\ref{boundarycontinuity}) by considering the convex function restricted to line segments parallel to the boundary edge. 

We now focus on the normal gradient component and place the convex polygon into the standard position by Remark \ref{standardposition}, so the open boundary edge $(p_i, p_{i+1})$ containing $u_*$ lies on the $u_2$-axis, the domain is contained in the right half-plane, and the boundary data is zero on this edge. The normal gradient component is just $-\partial_{u_1}\varphi$. Using convexity, $\partial_{u_1}\varphi$ is bounded from the above near $u_*$.

We suppose for contradiction that $\partial_{u_1} \varphi (u)$ stays bounded for some sequence $u \to u_*$, and therefore, the gradient at $u$ stays bounded. Then there exists some subgradient for $\varphi$ at $u_*$, which must be of the form $(-\Lambda, 0)$ for some $\Lambda\in \mathbb{R}$, because the tangential component is zero. 
Let 
\begin{align*}
    \Lambda_0 = \inf \{ \Lambda \; | \; (-\Lambda,0) \text{ is a subgradient of } \varphi \text{ at some point } v \in (p_i,p_{i+1})  \},
\end{align*}
so in particular $\varphi(u)\geq -\Lambda_0 u_1$ for any $u\in \overline{U}$. Thus $(-\Lambda_0, 0)$ is a subgradient at every point on the open edge, and $\partial_{u_1} \varphi\geq -\Lambda_0$ on $U$.

We fix a small constant $h > 0$ such that $u_*$ has distance at least $2h$ to the vertices. Let $R(u_*, h, \varepsilon)$ be the rectangle with length $h$ and width $\varepsilon\ll 1$, as shown in Figure \ref{rectangle}.
    \begin{figure}[H]
    \includegraphics[width=5cm]{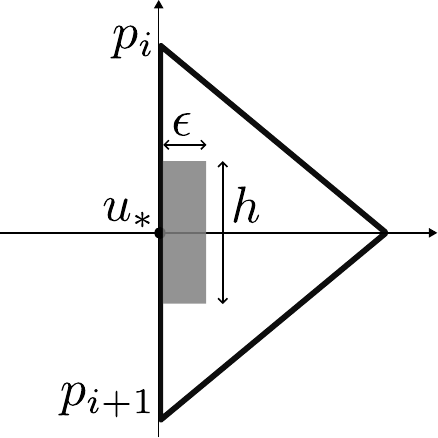}
    \centering
    \caption{Standard position in the case $n=3$.\\ $R(u_*,h, \varepsilon)$ is the shaded region.}
    \label{rectangle}
    \end{figure}
By the Monge-Amp\`ere equation, and the strict positivity of $V$,
\begin{align}\label{equationV}
  C^{-1}h\varepsilon  \leq \int_{R(u_*,h, \varepsilon)} V du_1 du_2 = \int_{\nabla \varphi(R(u_*,h, \varepsilon))} dy_1 dy_2.
\end{align}
By the convexity of $\varphi$ and the bound $-\Lambda_0 u_1\leq \varphi\leq Cu_1$, we deduce
\begin{align*}
    |\partial_{u_2} \varphi(u) | \leq C (\Lambda_0,h) u_1, \quad \text{ for all } \quad u \in R(u_*,h, \varepsilon).
\end{align*}
Thus by considering the gradient image,
\begin{align*}
    |\int_{\nabla \varphi(R(u_*,h, \varepsilon))} dy_1 dy_2| \leq C( \Lambda_0, h )\varepsilon  \sup_{ R(u_*,h, \varepsilon) }(\partial_{u_1} \varphi(u) + \Lambda_0),
\end{align*}
Contrasting with (\ref{equationV}), for any small $\varepsilon>0$,
\begin{align*}
   \sup_{ R(u_*,h, \varepsilon) }(\partial_{u_1} \varphi(u) + \Lambda_0)  \geq C(\Lambda_0, h)^{-1}.
\end{align*}
In the limit $\varepsilon\to 0$, we can extract some subgradient at some boundary point, which contradicts the minimality of $\Lambda_0$.
\end{proof}

\subsection{Solutions near vertices}\label{soolutions-near-vertices}

In this section, we examine the behavior of the gradient of $\varphi$ near the vertices of the convex polygon, which will be important in studying the smoothness of $L$. The ideal picture to have in mind, which we justify in this section, is shown in Figure \ref{mapping} for the case $n=3$. The Map $F = \nabla \varphi$ takes the bounded gray solid convex polygon to the unbounded gray area.
\begin{figure}[H]
\includegraphics[width=14cm]{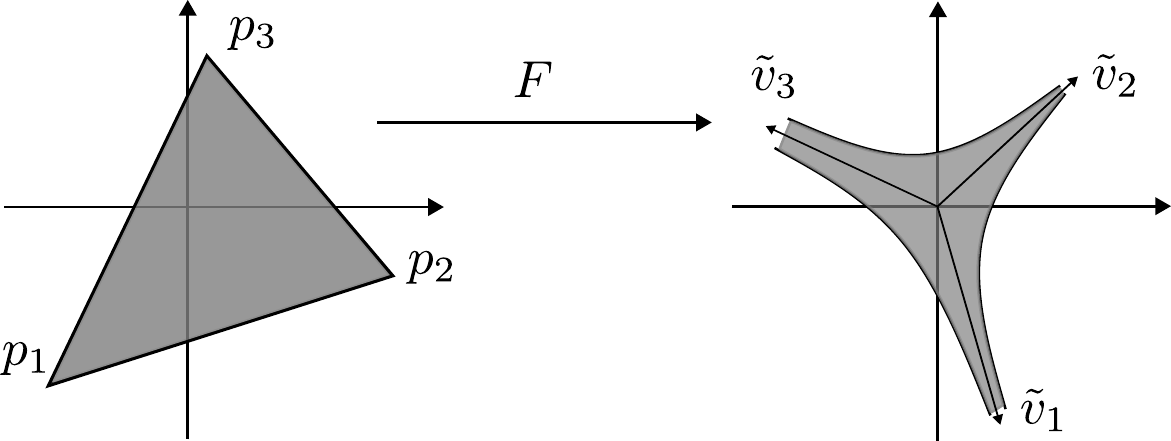}
    \centering
    \caption{Mapping $F = \nabla \varphi: U \subset \mathbb{R}^2_{(u_1,u_2)} \to \mathbb{R}^2_{(y_1,y_2)}$.}
    \label{mapping}
\end{figure}
Denote the subgradient sets at the vertices by
\begin{align*}
    C_{p_i} = \{ y \in \mathbb{R}^2_{(y_1,y_2)} \; | \; \varphi(u)-\varphi(p_i) \geq \langle y,  u - p_i\rangle \text{ for all } u \in \overline{U}\}.
\end{align*}

\begin{lemma}\label{disjoint}
The sets $C_{p_i}$ are disjoint convex closed subsets of $\mathbb{R}^2$ contained in the wedge region
\begin{align*}
    C_{p_i} \subset W_{p_i}=\{ y \in \mathbb{R}^2 \; | \; y \cdot (p_{i+1}-p_{i}) \leq b_{i+1}-b_i \} \cap \{ y \in \mathbb{R}^2 \; | \; y \cdot (p_{i-1}-p_i) \leq b_{i-1}-b_i \}.
\end{align*}
\end{lemma}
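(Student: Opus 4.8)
The plan is to establish the three assertions of the lemma in turn: that each $C_{p_i}$ is convex and closed, that $C_{p_i} \subset W_{p_i}$, and finally that the $C_{p_i}$ are pairwise disjoint. Convexity and closedness are essentially formal: $C_{p_i}$ is an intersection of the closed half-planes $\{ y : \langle y, u-p_i\rangle \le \varphi(u) - \varphi(p_i)\}$ indexed by $u \in \overline U$, so it is automatically convex and closed. (One should note it is nonempty, since $\varphi$ is convex and finite at $p_i$, so a subgradient exists there; boundedness follows once we prove containment in $W_{p_i}$, since a wedge with opening angle less than $\pi$ — which is the case here because $U$ is a convex polygon with interior angle at $p_i$ strictly between $0$ and $\pi$ — contains no line, hence $C_{p_i}$ is a closed convex set that is in fact compact unless it contains a ray; I will address this if needed but it is not strictly required by the statement.)

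For the containment $C_{p_i} \subset W_{p_i}$, I would restrict the subgradient inequality defining $C_{p_i}$ to the two edges emanating from $p_i$. Take $y \in C_{p_i}$. Along the edge $[p_i, p_{i+1}]$, write $u = p_i + t(p_{i+1}-p_i)$ for $t \in [0,1]$; by the affine-linear boundary data \eqref{boundarydata}, $\varphi(u) - \varphi(p_i) = t(b_{i+1} - b_i)$, while $\langle y, u - p_i\rangle = t\, \langle y, p_{i+1}-p_i\rangle$. The defining inequality $t\,\langle y, p_{i+1}-p_i\rangle \le t(b_{i+1}-b_i)$ for all $t \in (0,1]$ gives $\langle y, p_{i+1}-p_i\rangle \le b_{i+1}-b_i$. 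The same argument along the edge $[p_{i-1},p_i]$, parametrized from $p_i$ toward $p_{i-1}$ and again using the affine boundary data on that edge (with $\varphi(p_{i-1}) = b_{i-1}$), yields $\langle y, p_{i-1}-p_i\rangle \le b_{i-1}-b_i$. Together these are exactly the two half-plane conditions cutting out $W_{p_i}$.

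For disjointness, the key point — and the one requiring the most care — is to rule out a shared subgradient at two distinct vertices. Suppose $y \in C_{p_i} \cap C_{p_j}$ with $i \ne j$. Then for all $u \in \overline U$ we have both $\varphi(u) - \varphi(p_i) \ge \langle y, u - p_i\rangle$ and $\varphi(u) - \varphi(p_j) \ge \langle y, u - p_j\rangle$. Taking $u = p_j$ in the first and $u = p_i$ in the second and adding, we get $0 \ge \langle y, p_j - p_i\rangle + \langle y, p_i - p_j\rangle = 0$, so equality holds throughout, i.e.\ $\varphi(p_j) - \varphi(p_i) = \langle y, p_j - p_i\rangle$ and hence $y$ is a subgradient of $\varphi$ along the entire segment $[p_i,p_j]$, and $\varphi$ is affine (equal to $\varphi(p_i) + \langle y, \cdot - p_i\rangle$) on that segment. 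Now I distinguish two cases. If $[p_i,p_j]$ is an edge of $U$, this is consistent with the affine boundary data, so I need a further argument: $y$ being a subgradient at an interior-edge point forces the normal derivative of $\varphi$ to stay finite there, directly contradicting the gradient-divergence Proposition \ref{prop:gradientdivergence}. If $[p_i,p_j]$ is an interior chord (a diagonal) of $U$, then its relative interior lies in the open polygon $U$ where $\varphi$ is smooth by the Dirichlet-problem theorem; affineness of $\varphi$ along that chord means $\det D^2\varphi$ has a zero eigenvalue in the chord direction at interior points, contradicting $\det D^2\varphi = V > 0$ on $U$. Either way we reach a contradiction, so $C_{p_i} \cap C_{p_j} = \emptyset$. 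I expect the edge case of the disjointness argument — invoking Proposition \ref{prop:gradientdivergence} to exclude a finite subgradient at an interior edge point — to be the main subtlety, since it is where the specific structure of the boundary data and the singular right-hand side genuinely enters; the chord case and the two easier assertions are soft.
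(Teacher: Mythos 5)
Your proof is correct and follows essentially the same route as the paper: the disjointness is obtained by showing a shared subgradient would be a subgradient along the whole segment $[p_i,p_j]$, then splitting into the diagonal case (contradicting strict convexity from $\det D^2\varphi = V>0$) and the boundary-edge case (contradicting the gradient divergence of Proposition \ref{prop:gradientdivergence}). You additionally write out the convexity, closedness, and wedge containment, which the paper treats as immediate consequences of the definition and the affine boundary data; your parenthetical speculation about compactness is unnecessary (and the $C_{p_i}$ are in fact unbounded), but you correctly note it is not required by the statement.
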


\begin{remark}\label{translatedwedge}
    The wedge region is a translated copy of the wedge
\begin{equation*}
W_{p_i}'= \{ y \in \mathbb{R}^2 \; | \; y \cdot (p_{i+1}-p_{i}) \leq 0 \} \cap \{ y \in \mathbb{R}^2 \; | \; y \cdot (p_{i-1}-p_i) \leq 0 \},
\end{equation*}
in which the directions of its two extremal rays are specified by the vectors $\tilde{v}_{i-1}, \tilde{v}_i$. For different $i$, the wedges $W_{p_i}'$ can only intersect along boundary rays, and the intersections between different $W_{p_i}$ have areas bounded by some constant depending only on $\{ b_k\}_{k=1}^n$.
\end{remark}

\begin{figure}[H]
    \includegraphics[width=15cm]{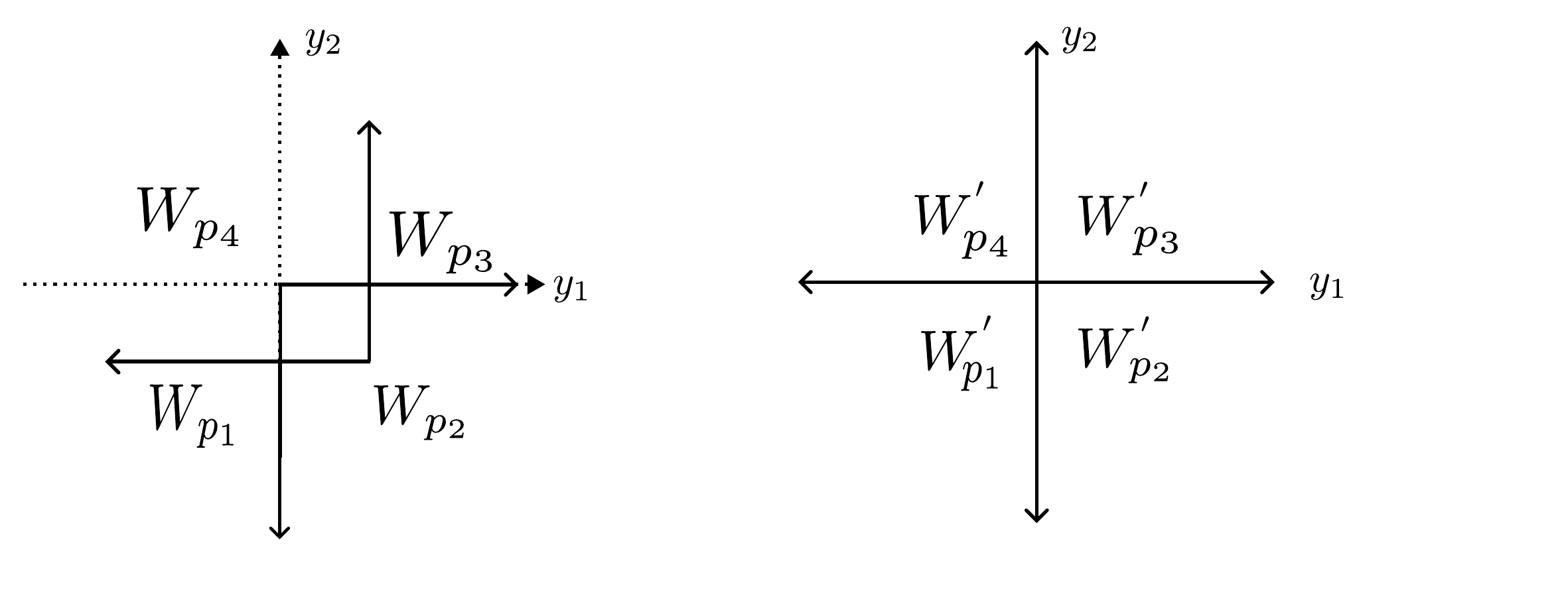}
    \centering
    \caption{Adjacent wedges $W_{p_i}$ and $W_{p_{i+1}}$ intersect along a common ray, but non-adjacent wedges may intersect nontrivially inside a compact set.}
    \label{example}
\end{figure}

\begin{proof}[Proof of Lemma \ref{disjoint}]
Suppose $C_{p_i}$ and $C_{p_j}$ have a common vector, then by convexity, this vector is a subgradient for all the points on the line segment $[p_i, p_j]$. If the open segment $(p_i, p_j)$ is in the interior domain $U$, this would contradict the strict convexity of $\varphi$ (which follows from the smoothness of the real Monge-Amp\`ere solution), and if $(p_i, p_j)$ is an edge in the boundary, this would imply the existence of subgradient at every point of $(p_i,p_j)$, contradicting Prop. \ref{prop:gradientdivergence}.
 This proves the disjointness of $C_{p_i}$. 
\end{proof}

\begin{figure}[H]
    \includegraphics[width=6cm]{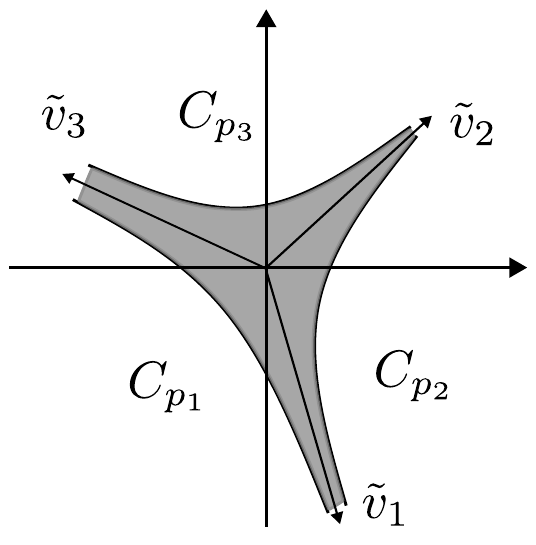}
    \centering
    \caption{ Subgradients $C_{p_1}, C_{p_2}$, and $C_{p_3}$, in the case $n=3$.}
    \label{subgradients}
\end{figure}

\begin{lemma}[Image of the gradient]
    We have $\nabla \varphi(U) = \mathbb{R}^2 \setminus (\cup_i C_{p_i} )$.
\end{lemma}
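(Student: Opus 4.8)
The plan is to establish the two inclusions $\nabla\varphi(U)\subseteq \mathbb{R}^2\setminus(\cup_i C_{p_i})$ and $\mathbb{R}^2\setminus(\cup_i C_{p_i})\subseteq \nabla\varphi(U)$ separately, using convexity of $\varphi$, strict convexity in the interior (from interior smoothness of the Monge–Ampère solution), and Proposition \ref{prop:gradientdivergence} to control the boundary behavior.

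For the first inclusion, suppose $y=\nabla\varphi(u_0)$ for some $u_0\in U$. If we also had $y\in C_{p_i}$ for some $i$, then $y$ would be a subgradient of $\varphi$ both at the interior point $u_0$ and at the vertex $p_i$; by convexity, $y$ is then a subgradient at every point of the segment $[u_0,p_i]$, which has a nontrivial portion inside $U$. This contradicts the strict convexity of $\varphi$ in $U$ established in Step 3 of the Dirichlet problem. Hence $\nabla\varphi(U)$ is disjoint from every $C_{p_i}$.

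For the reverse inclusion, fix $y_0\notin \cup_i C_{p_i}$ and consider the function $g(u):=\varphi(u)-\langle y_0,u\rangle$ on $\overline U$. Since $g$ is continuous on the compact set $\overline U$, it attains its minimum at some point $u_0$. The claim is that $u_0\in U$, so that $\nabla g(u_0)=0$, i.e.\ $\nabla\varphi(u_0)=y_0$. First, $u_0$ cannot be a vertex $p_i$: a minimum at $p_i$ would mean $y_0$ is a subgradient of $\varphi$ at $p_i$, i.e.\ $y_0\in C_{p_i}$, contradicting the choice of $y_0$. Second, $u_0$ cannot lie on an open edge $(p_i,p_{i+1})$: by Proposition \ref{prop:gradientdivergence} the inward normal derivative of $\varphi$ blows up to $+\infty$ there, so moving inward from such a point strictly decreases $g$ (the directional derivative of $g$ in the inward normal direction is $-\infty$), contradicting minimality. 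Therefore $u_0\in U$, and since $\varphi$ is smooth and convex near $u_0$ with a minimum of $g$ there, $\nabla\varphi(u_0)=y_0$.

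The main obstacle is the edge case in the second inclusion: one must rule out a boundary minimum on an open edge, and the cleanest way is to invoke the gradient divergence of Proposition \ref{prop:gradientdivergence} to see that $g$ has no minimum there. Care is needed because $\varphi$ is only continuous up to $\overline U$ and the gradient estimates hold in the limiting sense; I would phrase the argument via difference quotients of the convex function $g$ restricted to a segment through $u_0$ transverse to the edge, using that $\partial_{u_1}\varphi\to -\infty$ (in standard position) forces $g$ to decrease as one moves into $U$. Once both inclusions are in hand, the identity $\nabla\varphi(U)=\mathbb{R}^2\setminus(\cup_i C_{p_i})$ follows.
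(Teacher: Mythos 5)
Your proposal is correct and follows essentially the same route as the paper: minimizing $\varphi(u)-\langle y_0,u\rangle$ over $\overline U$ is exactly the paper's device of raising the affine graph $a+\langle y_0,u\rangle$ until it first touches the graph of $\varphi$, and both arguments then use Proposition \ref{prop:gradientdivergence} to exclude a touching point on an open edge and strict convexity in $U$ to get disjointness from the $C_{p_i}$. Your segment argument for the first inclusion and the difference-quotient treatment of the edge case are just slightly more explicit versions of what the paper leaves implicit.
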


\begin{proof}
First we claim that $\mathbb{R}^2= \cup_i  C_{p_i} \cup \nabla \varphi(U)$. Given any $y\in \mathbb{R}^2$, we consider the graph of the affine linear function $a+ \langle y, u\rangle$ on $\overline{U}$,  where $a\in \mathbb{R}$ increases from negative infinity. There must be some $a$ when the graph first touches the graph of the convex function $\varphi$. This shows that $y$ is a subgradient at some point on $\overline{U}$. But the divergence of the gradient on the edge (Prop. \ref{blowup}) shows that there is no subgradient at any point on the open edges, so $y$ is a subgradient either at an interior point or at one of the vertices.

By the strict convexity of $\varphi$ in $U$, we see \begin{align*}
\nabla \varphi(U) \cap C_{p_i} = \emptyset.
\end{align*}
Thus  $\nabla \varphi(U) = \mathbb{R}^2 \setminus (\cup_i C_{p_i} )$.
\end{proof}

\begin{lemma}\label{close-to-vertices}
    For any sequence of points $u\in U$ converging to $ p_i$, after passing to a subsequence, either $\nabla \varphi (u) \to \infty$ or $\nabla \varphi (u)$ converges to a point in $\partial C_{p_i}$. 
\end{lemma}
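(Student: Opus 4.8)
The plan is to argue by contradiction. Suppose $u_k \to p_i$ in $U$, and after passing to a subsequence $\nabla\varphi(u_k) \to y_\infty \in \mathbb{R}^2$ (if $|\nabla\varphi(u_k)|\to\infty$ there is nothing to prove, so assume the gradients stay bounded and converge). Since $y_\infty$ is a limit of subgradients at points converging to $p_i$, standard properties of the subdifferential of a convex function (upper semicontinuity / closedness of the graph of $\partial\varphi$ on $\overline U$) give $y_\infty \in \partial\varphi(p_i) = C_{p_i}$. So the only thing to rule out is $y_\infty \in \mathrm{int}(C_{p_i})$.

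Here I would use the characterization $\nabla\varphi(U) = \mathbb{R}^2\setminus(\cup_j C_{p_j})$ from the previous lemma, together with the disjointness of the $C_{p_j}$ (Lemma \ref{disjoint}). If $y_\infty$ were an interior point of $C_{p_i}$, then a whole neighborhood $B(y_\infty,\rho)$ would be contained in $C_{p_i}$, hence disjoint from $\nabla\varphi(U)$ by the strict convexity statement $\nabla\varphi(U)\cap C_{p_i}=\emptyset$. But $\nabla\varphi(u_k)\in\nabla\varphi(U)$ for all $k$ and $\nabla\varphi(u_k)\to y_\infty$, so for large $k$ we would have $\nabla\varphi(u_k)\in B(y_\infty,\rho)\subset C_{p_i}$, a contradiction. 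Therefore $y_\infty\in C_{p_i}\setminus\mathrm{int}(C_{p_i}) = \partial C_{p_i}$, which is exactly the claim.

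The one technical point that needs care is the first step: justifying $y_\infty\in C_{p_i}$, i.e. that the subgradient property passes to the limit at the boundary vertex $p_i$. For each $k$, $\nabla\varphi(u_k)$ is a subgradient at $u_k\in U$, meaning $\varphi(u) - \varphi(u_k) \geq \langle \nabla\varphi(u_k), u - u_k\rangle$ for all $u\in\overline U$. Since $\varphi$ is continuous on $\overline U$ (by the Dirichlet problem theorem), $u_k\to p_i$, and $\nabla\varphi(u_k)\to y_\infty$, passing to the limit in $k$ gives $\varphi(u)-\varphi(p_i)\geq \langle y_\infty, u - p_i\rangle$ for all $u\in\overline U$, which is precisely the membership $y_\infty\in C_{p_i}$. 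I expect this to be the main (though still routine) obstacle — everything else is a direct consequence of the image-of-gradient lemma and the disjointness lemma already established, so the proof is short.
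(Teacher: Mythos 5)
Your proof is correct and is essentially the same as the paper's: both pass to a convergent subsequence, use closedness of the subdifferential to place the limit in $C_{p_i}$, and then use the fact that $\nabla\varphi(U)$ is disjoint from the interior of $C_{p_i}$ (via the image-of-the-gradient lemma) to conclude the limit lies on $\partial C_{p_i}$. Your explicit justification of the limit passage in the subgradient inequality, using the continuity of $\varphi$ on $\overline U$, is a detail the paper leaves implicit but is exactly the right point to spell out.
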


\begin{proof}
Suppose $|\nabla \varphi (u)|$ stays bounded. After passing to a subsequence, $|\nabla \varphi (u)|$ converges to some $y$. Then $y$ is a subgradient at $p_i$, so lies in $C_{p_i}$. However, the limit $y$ has to lie in the closure of $\nabla\varphi(U)$, which is disjoint from the interior of $C_{p_i}$. Thus the gradient lies on $\partial C_{p_i}$.
\end{proof}

Lemma \ref{infinite-wedge} gives some asymptotic decay bound on the gradient image $\nabla \varphi(U)$.

\begin{lemma}\label{infinite-wedge}
Suppose $y\in \nabla\varphi(U)$ and $|y|\geq 1$. Then, for one of the boundary rays $\mathcal{R}$ of some wedge region $W_{p_i}$, we have $\text{dist}(y, \mathcal{R}) \leq C/|y|$, where the distance is measured in the Euclidean $\mathbb{R}^2$, and the constant $C$ is independent of $y$.
\end{lemma}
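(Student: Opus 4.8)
The plan is to control the gradient image $\nabla\varphi(U)$ by comparing the Monge--Ampère mass of a suitable region near the boundary ray with the Euclidean area swept out in the $y$-plane, exploiting that $V$ is bounded below by a positive constant away from the vertices. Fix $y_0\in\nabla\varphi(U)$ with $|y_0|=r\geq 1$, say $y_0=\nabla\varphi(u_0)$. By Lemma \ref{infinite-wedge}'s hypothesis and Lemma \ref{disjoint}, $y_0\in\mathbb{R}^2\setminus(\cup_i C_{p_i})$, and for $r$ large $y_0$ must lie close to the boundary of one of the wedges $W_{p_i}$ (since the $W_{p_i}$ cover a neighborhood of infinity up to bounded overlaps, by Remark \ref{translatedwedge}): indeed the complement of $\cup_i W_{p_i}$ is bounded, so for $r$ large enough $y_0\in W_{p_i}$ for some $i$, and we must show it is within $C/r$ of one of the two bounding rays $\mathcal{R}$ of $W_{p_i}$.

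The key step is a localized Alexandrov-type estimate. Suppose toward a contradiction that $y_0$ has distance $\geq M/r$ from both bounding rays of $W_{p_i}$, with $M$ a large constant to be chosen. Consider the convex function $\psi(u)=\varphi(u)-\langle y_0,u\rangle$; it attains its minimum over $\overline U$ at $u_0$, and by the divergence of the normal gradient on the open edges (Proposition \ref{prop:gradientdivergence}) together with $y_0\notin C_{p_k}$ for all $k$, the minimum is interior, so $u_0\in U$. Now I would take the section $S_t=\{u\in U: \psi(u)<\psi(u_0)+t\}$. The subgradient image $\nabla\varphi(S_t)\subset y_0+\{\text{subgradients of }\psi\text{ on }S_t\}$ is contained in a ball of radius comparable to $t/\operatorname{dist}(u_0,\partial S_t)$; on the other hand, the Monge--Ampère measure of $S_t$ is $\int_{S_t}V\geq c\,|S_t|$ once $S_t$ avoids small fixed balls around the vertices. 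Combining with John's lemma normalization of the convex section $S_t$ and the fact that $\nabla\varphi(S_t)$ is forced to stay inside the wedge $W_{p_i}$ (or rather inside the region $\nabla\varphi(U)$, which near $y_0$ sits in a half-plane whose boundary is one of the rays $\mathcal{R}$), one derives that $\nabla\varphi(S_t)$ must, as $t$ grows, escape toward the edges of $U$ and its image must pile up against $\partial W_{p_i}$; quantitatively, the Euclidean area of $\nabla\varphi(S_t)$ is $\lesssim t\cdot(\text{width of the wedge slab at distance }\sim r)\lesssim t/r\cdot(\text{length})$, while the Monge--Ampère mass is $\gtrsim |S_t|$, and playing these off against the diameter bounds gives $\operatorname{dist}(y_0,\mathcal R)\le C/r$.

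Concretely, the cleanest route: translate so $W_{p_i}'$ is the model wedge, and use that $\nabla\varphi$ restricted to a neighborhood of $u_0$ in $U$ maps into the strip $\{y:\ 0\le \operatorname{dist}(y,\mathcal R)\le \text{small}\}$ only if $y_0$ is already close to $\mathcal R$; more precisely I would slice $U$ by the family of lines parallel to the edge $[p_{i-1},p_i]$ (whose normal direction is $\tilde v_{i-1}$, the direction of the ray $\mathcal R$), restrict $\varphi$ to each slice, use one-dimensional convexity plus the boundary estimate (\ref{boundarycontinuity}) to bound how fast the component of $\nabla\varphi$ in the $\tilde v_{i-1}$-direction can grow, and integrate: the total Monge--Ampère mass captured between the level $\{y\cdot(p_{i-1}-p_i)=b_{i-1}-b_i\}$ and the level through $y_0$ is at least $c\cdot(\text{that gap})\cdot(\text{length in }u)$, while it equals the $y$-area of the corresponding gradient image, which is at most $(\text{that gap})^{?}$ times a bounded factor; matching exponents forces $\operatorname{dist}(y_0,\mathcal R)\le C/|y_0|$.

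I expect the main obstacle to be making the region $S_t$ (or the slab in the $y$-plane) precise enough that the two-sided comparison closes with the sharp power $1/|y|$ rather than, say, $1/|y|^{1/2}$; this requires carefully using that along the relevant one-dimensional slices of $U$ the length is bounded below by a constant (away from the vertex) while $V\ge c>0$ there, so the Monge--Ampère mass grows \emph{linearly} in the $y$-displacement, and simultaneously that the gradient image is trapped in a wedge of \emph{fixed opening angle}, so its Euclidean area across a $y$-displacement $\delta$ at distance $r$ from the vertex is $\lesssim \delta\cdot r$ — but we also pick up a factor forcing $\delta\cdot r\lesssim 1$. Keeping track of these two competing linear factors, and handling the degenerate geometry near where two wedges meet (using the bounded-overlap statement of Remark \ref{translatedwedge}), is the delicate part; everything else is a standard application of the Alexandrov maximum principle and convexity.
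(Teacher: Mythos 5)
There is a genuine gap. The statement follows from a much more elementary area comparison than the machinery you invoke, and the one nontrivial point in that comparison is exactly the point your sketch never addresses. The paper's argument is: let $y'$ be the perpendicular projection of $y$ onto the nearest ray $\mathcal{R}$, and let $T(y)$ be the right triangle with vertices $y$, $y'$, and the vertex of $\mathcal{R}$; its area is comparable to $\tfrac12|y'|\,|y-y'|\sim |y|\cdot\mathrm{dist}(y,\mathcal{R})$. Since $\mathbb{R}^2=\nabla\varphi(U)\cup\bigcup_j C_{p_j}$ and $\mathrm{Area}(\nabla\varphi(U))=\int_U V\,du_1du_2<\infty$, it suffices to show $T(y)$ meets $\bigcup_j C_{p_j}$ in bounded area: then $\mathrm{Area}(T(y))\leq C$, which is precisely $|y|\cdot\mathrm{dist}(y,\mathcal{R})\leq C$. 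For $j\neq i$ this is the bounded overlap of the wedges $W_{p_j}$ (Remark \ref{translatedwedge}); for $j=i$ one shows $C_{p_i}\cap T(y)=\emptyset$ by convexity: if some $y''\in C_{p_i}\cap T(y)$, then since $y\notin C_{p_i}$ the convex set $C_{p_i}$ misses an entire infinite subwedge of $W_{p_i}$ with vertex at $y$, which would force $\nabla\varphi(U)\cap W_{p_i}$ to have infinite area, contradicting $\int_U V<\infty$.

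Your proposal has the right raw ingredients (finiteness of $\int_U V$, confinement of the $C_{p_j}$ to wedges with bounded overlaps), but the argument does not close. First, you never identify a concrete region of the $y$-plane whose Euclidean area is simultaneously bounded above by a constant and bounded below by $|y|\cdot\mathrm{dist}(y,\mathcal{R})$; your ``slab/strip'' accounting produces a Monge--Amp\`ere mass that grows only linearly in the displacement $\delta=\mathrm{dist}(y_0,\mathcal{R})$, which yields $\delta\lesssim 1$, not $\delta\cdot|y|\lesssim 1$ --- the crucial factor of $|y|$ must come from a region reaching all the way back to the vertex of the ray, and you leave the decisive exponent as a literal question mark. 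Second, and more seriously, your lower bound on the area swept inside $\nabla\varphi(U)$ presupposes that the region between $y_0$ and $\mathcal{R}$ is not occupied by $C_{p_i}$ itself; nothing in your sketch rules this out, and $C_{p_i}$ is subject to no area constraint whatsoever (it is the complement of the gradient image). Ruling this out is the genuinely nontrivial convexity step, and without it the two-sided comparison you describe cannot be completed. The detour through Alexandrov sections $S_t$ and John's lemma is not needed and does not substitute for this step.
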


In particular, Lemma \ref{infinite-wedge} shows that a scenario similar to the one shown in Figure \ref{wedge}, where $\nabla \varphi (U)$ contains an infinite wedge, cannot happen.

\begin{figure}[H]
    \includegraphics[width=6cm]{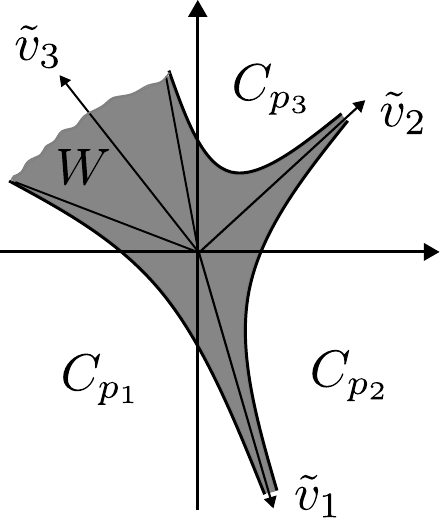}
    \centering
    \caption{$\nabla \varphi (U)$ cannot contain an inifinite wedge $W$.}
    \label{wedge}
\end{figure}

\begin{proof}
Without loss of generality, we assume that $y$ is large compared to $\max_i |b_i - b_{i+1}|$. The rays $\mathbb{R}_{\geq 0} \tilde{v}_i$ partition $\mathbb{R}^2$ into wedge-shaped regions $W_{p_i}'$. We choose the direction $\tilde{v}_i$ which minimizes the angle with the direction of $y$. Notice $\tilde{v}_i$ is parallel to the boundary ray between  $W_{p_i}$ and $W_{p_{i+1}}$, and by the choice of $\tilde{v}_i$ and the largeness of $y$, we see $y$ must lie in either $W_{p_i}$ or $W_{p_{i+1}}$, and without loss we focus on $y\in W_{p_i}$.

Let $q_i$ be the vertex point of $W_{p_i}$, and let $\mathcal{R} = \mathbb{R}_{\geq 0} \tilde{v}_i + q_i$, and let $y'$ be the intersection point of the ray $\mathcal{R}$ with the ray $y+ \mathbb{R}_{\leq 0} \tilde{v}_{i-1}$. If $y=y'$, then the distance to the ray is zero, and we are done. So without loss $y\neq y'$, and we
consider triangle $T(y)$ with vertices at $y, y', q_i$. 

\begin{figure}[H]
    \includegraphics[width=5cm]{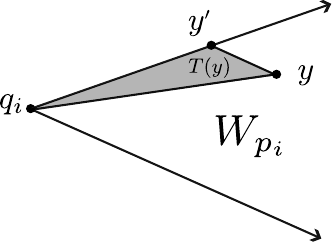}
    \centering
    \caption{Triangle $T(y)$.}
    \label{width}
\end{figure}

By construction the triangle $T(y)$ is contained in $W_{p_i}$. By elementary geometry, the area of this triangle is comparable to $|y| \text{dist}(y, \mathcal{R})$ when $|y|$ is much larger than $\max_i |b_i-b_{i+1}|$.

Using the partition $\mathbb{R}^2= \nabla \phi(U)\cup \cup_j C_{p_j}$, 
 \begin{equation}
 \begin{split}
   \text{Area}(W_{p_i}\setminus C_{p_i})  & \leq \int_{\nabla \varphi(U)} dy_1 dy_2 +  \text{Area}(W_{p_i}\cap \cup_{j\neq i} C_{p_j})
   \\
   &\leq \int_{\nabla \varphi(U)} dy_1 dy_2 +
   \sum_{j\neq i}  \text{Area}(W_{p_i}\cap W_{p_j} )   \\
   & \leq \int_U Vdu_1du_2+ C \leq C.
 \end{split}
 \end{equation}
Here the second line uses $C_{p_j}\subset W_{p_j}$, and the third line uses the Monge-Amp\`ere equation and the fact that non-adjacent wedge regions $W_{p_j}$ only intersect within a compact set depending only on  $\{b_k\}$ and the direction of the rays. Since $T(y)\subset W_{p_i}$, in particular $ \text{Area}(T(y)\setminus C_{p_i}) \leq C$.

We claim that $C_{p_i}\cap T(y)$ is empty.
Suppose $y''\in C_{p_i}\cap T(y) \subset W_{p_i}$. Then by the convexity of $C_{p_i}$, and the fact that  $y\notin C_{p_i}$ because of $y\in \nabla \varphi(U)\subset \mathbb{R}^2\setminus \cup C_{p_i}$, we deduce that the ray starting from $y$ in the direction $y-y''$, lies in the complement of $C_{p_i}$. Since the direction of $y-y''$ lies in the wedge region $W_{p_i}'$,  by the convexity of $C_{p_i}$, the set $W_{p_i}\setminus C_{p_i}$ contains some infinite wedge region with vertex at $y$. This is impossible by the finiteness of $\text{Area}(W_{p_i}\setminus C_{p_i})$.

Combining the above,
\[
|y| \text{dist}(y, \mathcal{R}) \leq C \text{Area}(T(y)) \leq C, 
\]
hence $\text{dist}(y, \mathcal{R}) \leq \frac{C}{ |y|  }$ when $|y|$ is large.

    \vspace{5pt}

\end{proof}

\section{Regularity, asymptotics, and topology}

This section aims
to prove Theorem \ref{generalizationDonaldsonScaduto}, by producing the desired special Lagrangian from the solution of the real Monge-Amp\`ere equation, and establishing its smoothness, asymptotic properties, and topology.

\subsection{Smoothness of the special Lagrangian}\label{tangentcone}

Let $q: u_3^{-1}(0) \to Z_{\text{red}}$ be the quotient map, and $L^{\circ} = q^{-1}(\text{Graph}(\nabla \varphi
_{|_{U}}))$. By the smoothness of the real Monge-Amp\`ere solution $\varphi$, clearly $L^{\circ}$ is a smooth special Lagrangian submanifold in $Z$, diffeomorphic to  $\mathbb{D}^2 \times S^1$; however, it is not a closed subset of $Z$. Let $L$ be the current of integration defined by $L^\circ$, so that $L$ contains also points in the closure of $L^\circ$. The goal of this section is to prove the following theorem.

\begin{theorem}\label{smooth-submanifold}
    $L$ is represented by a smooth submanifold of $Z$ without boundary.
\end{theorem}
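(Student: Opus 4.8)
The strategy is to analyze the closure $\overline{L^\circ}$ point by point. Since $L^\circ$ is already a smooth special Lagrangian, it suffices to understand the set $S = \overline{L^\circ}\setminus L^\circ$ and show that near each of its points, $L$ is a smooth submanifold. The first reduction is to use Proposition \ref{prop:gradientdivergence}: as $u\in U$ approaches an open edge $(p_i,p_{i+1})$, the normal component of $\nabla\varphi$ diverges to $+\infty$, so the fibre direction escapes to infinity in $\mathbb{R}^2_{(y_1,y_2)}$ and no point of $Z$ lying over an open edge can be a limit of points of $L^\circ$. Hence $S$ is contained in $q^{-1}$ of the fibres over the vertices $p_1,\ldots,p_n$ — but at a vertex $p_i$ the moment-map circle collapses, so $q^{-1}(p_i,\,\cdot\,)$ is (a copy of) $C_{p_i}$, sitting as a smooth surface in $Z$. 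By Lemma \ref{close-to-vertices}, the only possible limit points have $\nabla\varphi(u)\to y\in\partial C_{p_i}$; thus $S$ is a subset of finitely many compact arcs (the boundaries $\partial C_{p_i}$), each lying in a smooth $2$-dimensional submanifold of $Z$.

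Next I would show $L$ is a stationary integral $2$-current (more precisely a special Lagrangian integral current, hence area-minimizing in its homology class, calibrated by $\mathrm{Re}\,\Omega$). The open part $L^\circ$ is calibrated, so $L$ is a calibrated current away from the small set $S$; since $S$ has finite $\mathcal H^2$-measure and $L^\circ$ has locally finite mass near $S$ (this uses that the gradient image $\nabla\varphi(U)$ has finite area, together with Lemma \ref{infinite-wedge} controlling how $\nabla\varphi(U)$ accumulates onto $\partial C_{p_i}$, so the mass of $L^\circ$ in a neighbourhood of a point of $S$ is finite), $L$ extends across $S$ as an integral current with no boundary, by a standard removable-singularity/constancy argument for calibrated currents whose singular set has vanishing $\mathcal H^1$-capacity relative to the support — i.e., the boundary $\partial L$ is supported in $S$, has dimension $\le 1$, finite mass, and therefore vanishes. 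So $L$ is a closed calibrated, hence area-minimizing, integral $2$-current in $Z$.

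Now comes the main point: local regularity at a point $z_0\in S\subset q^{-1}(p_i)$. Blow up $L$ at $z_0$. Because $L$ is area-minimizing, every tangent cone is an area-minimizing $U(1)$-invariant special Lagrangian cone in $\mathbb{C}^3$ (the $U(1)$-invariance survives the blow-up since the $U(1)$-action fixes $(p_i, y)$ and acts linearly on the tangent space, with $u_3$ still the moment map at level $0$). Here I invoke the classification of $U(1)$-invariant special Lagrangian cones in $\mathbb{C}^3$ in the spirit of Joyce \cite{MR2122282}: the only such cones that can arise as tangent cones of an embedded, multiplicity-one, area-minimizing current are planes (a $2$-plane $\mathbb{R}^2\subset\mathbb{C}^3$); the other $U(1)$-invariant cones — cones on links that are not great circles — either have higher multiplicity, or are not area-minimizing, or fail to be the tangent cone of an embedded current, and in any case the structure of $L$ near $z_0$ (it is the union of the graph $L^\circ$, which is connected and embedded, with an arc in the smooth surface $q^{-1}(p_i)$) forces multiplicity one. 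Once every tangent cone at every point of $S$ is a multiplicity-one plane, Allard's regularity theorem upgrades $L$ to a smooth submanifold in a neighbourhood of $S$; combined with smoothness of $L^\circ$, this proves Theorem \ref{smooth-submanifold}.

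\textbf{Main obstacle.} The delicate step is ruling out non-planar tangent cones, i.e. making precise that at the accumulation points on $\partial C_{p_i}$ no genuinely singular $U(1)$-invariant special Lagrangian cone (nor a transverse union of planes) can occur. This requires combining: (i) the classification/structure of $U(1)$-invariant SL cones in $\mathbb{C}^3$ following Joyce, (ii) the fact that $L$ is area-minimizing so only \emph{stable}, in fact minimizing, cones survive, and (iii) a multiplicity-one argument using the explicit geometry — $L^\circ$ is an embedded graph and the limiting set $q^{-1}(p_i)$ is a smooth surface meeting it along $\partial C_{p_i}$, so the density of $L$ at $z_0$ is exactly one. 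Getting the finite-mass/locally-rectifiable bookkeeping right near $S$ (so that $L$ is genuinely an integral current with $\partial L=0$, not merely a set) is the other technical point, and it is where Lemma \ref{infinite-wedge}'s decay estimate $\mathrm{dist}(y,\mathcal R)\le C/|y|$ is used to bound how fast $\nabla\varphi(U)$ crowds the boundary rays.
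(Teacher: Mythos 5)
Your overall architecture (closure adds only points over the vertices, show $L$ is a boundaryless integral current, classify $U(1)$-invariant tangent cones, finish with Allard) matches the paper's, but two of your key steps do not work as stated.

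\textbf{The tangent-cone exclusion is the real gap.} You propose to rule out the non-planar cones on Joyce's list partly on the grounds that they ``are not area-minimizing.'' This is false: every special Lagrangian cone is calibrated by $\operatorname{Re}(e^{i\theta}\Omega)$ for a suitable phase and is therefore area-minimizing, so minimality/stability cannot distinguish the $\mathbb{T}^2$-cones or the transverse pair $\Pi_\phi^+\cup\Pi_\phi^-$ from a plane. (Also note $L$ is a $3$-current in the $6$-manifold $Z$ and its tangent cones are special Lagrangian $3$-planes, not $2$-planes.) The paper's actual mechanism is geometric and specific to $L$: since $\pi_1(L_{\mathrm{red}})=\overline{U}$ is a convex polygon, the $u$-projection of any tangent cone at a point over the vertex $p_i$ must lie in the wedge spanned by $\overrightarrow{p_ip_{i+1}}$ and $\overrightarrow{p_ip_{i-1}}$, of opening angle strictly less than $\pi$ (Lemma \ref{tangent-cone-wedge}). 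The $A=\pm1$ $\mathbb{T}^2$-cones project onto all of $\mathbb{R}^2_{(u_1,u_2)}$ and $\Pi_\phi^+\cup\Pi_\phi^-$ projects onto a full line, so both are excluded; the Jacobi elliptic cones $0<|A|<1$ and multiplicities $\ge 2$ are excluded by the fact that $L$ projects with degree one to the $(y_1,u_2)$-plane (and its partial Legendre analogues), which passes to the blow-up. Your instinct that ``the graphical structure forces multiplicity one'' is the right one, but without the wedge constraint and the degree computation the singular cones are not actually eliminated.

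\textbf{The $\partial L=0$ step is also under-justified.} You assert that $\partial L$ is supported in a $1$-dimensional set, ``has finite mass, and therefore vanishes''; the finite mass of $\partial L$ is exactly what needs proof, and finiteness of the total mass of $L$ (from $\int_U V<\infty$) is not enough. The paper's argument uses a cutoff $\chi_r$ at $X$-distance $r$ from the vertex point, with $|d\chi_r|\le Cr^{-1}$, and beats this by the quantitative bound $\operatorname{Mass}\bigl(L\cap (B_X(p_i,r)\times K)\bigr)=O(r^2)$: the calibration identity writes the mass as the Lebesgue areas of the $(u_1,y_2)$- and $(u_2,y_1)$-projections, and the Gibbons--Hawking metric gives $|u-p_i|\le Cr^2$ on $B_X(p_i,r)$, so the $u$-range is quadratically small. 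The product $r^{-1}\cdot O(r^2)=O(r)\to 0$ is what kills the boundary. Your Lemma \ref{infinite-wedge} reference controls behaviour at spatial infinity, not the rate of mass concentration at the vertex fibres, so it does not substitute for this estimate.
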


The proof of Theorem \ref{smooth-submanifold} follows from Lemma \ref{no-boundary} and Lemma \ref{smoothness}.

\begin{lemma}\label{no-boundary}
    $L$ is a closed integral current,  $\partial L = 0$. Morever, any point in the support of $L$ either lies on $L^\circ$ or lies above some vertex $p_i$.
\end{lemma}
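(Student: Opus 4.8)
\textbf{Proof proposal for Lemma \ref{no-boundary}.}

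The plan is to exploit the structure theory of integral currents together with the divergence of the gradient near the open edges, which was established in Proposition \ref{prop:gradientdivergence}. First I would observe that $L^\circ$, being a smooth oriented special Lagrangian diffeomorphic to $\mathbb{D}^2\times S^1$, carries a natural multiplicity-one integral current structure, and that the mass of $L$ is finite: indeed the projection $q\circ(\text{Graph}(\nabla\varphi))$ has reduced volume controlled by $\int_U V\,du_1du_2<\infty$ (this is the same computation appearing in Lemma \ref{infinite-wedge}), and the $U(1)$-fibre has bounded length over compact subsets of $\mathbb{R}^3$; combined with the fact that $u_3\equiv 0$ on $L$ and that the $u_1,u_2$-coordinates are bounded (they lie over $\overline{U}$, which is compact), one gets $\mathbf{M}(L)<\infty$. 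Hence $L$ is a well-defined integral current of finite mass, and its support is the closure $\overline{L^\circ}$.

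Next I would identify the support. A point in $\overline{L^\circ}\setminus L^\circ$ projects under $\pi_1\circ q$ to a point of $\overline{U}\setminus U=\partial U$, i.e. either to a point on an open edge $(p_i,p_{i+1})$ or to a vertex $p_i$. I claim the first case is impossible. Suppose $x\in\overline{L^\circ}$ projects to $u_*\in(p_i,p_{i+1})$; then there is a sequence of points $u^{(k)}\in U$ with $u^{(k)}\to u_*$ whose graph points $(u^{(k)},\nabla\varphi(u^{(k)}))$ converge (after lifting, up to the compact $U(1)$-fibre) to $x$. But Proposition \ref{prop:gradientdivergence} asserts that the normal component $\nabla^{\text{normal}}\varphi(u^{(k)})\to+\infty$, so $|\nabla\varphi(u^{(k)})|\to\infty$ and no such limit point $x$ exists in $Z$. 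Therefore $\overline{L^\circ}\setminus L^\circ$ lies entirely in the fibres $q^{-1}(\pi_1^{-1}(\{p_1,\dots,p_n\}))$ over the vertices. Since $u_1,u_2,u_3$ are all constant there and the reduced $(y_1,y_2)$-values lie in the set $\partial C_{p_i}$ together with possibly a neighbourhood dictated by Lemma \ref{close-to-vertices}, this is a set of Hausdorff dimension at most $3$ — in fact, since each $\Sigma_i$ is a $2$-sphere sitting over the vertex and the extra $(y_1,y_2)$-directions add at most what is controlled near $p_i$, one checks $\mathcal{H}^3$-finiteness is not even needed: what matters is only that it has $\mathcal{H}^{k}$-measure zero for the relevant codimension, namely $\mathcal{H}^2$-measure zero inside the $3$-dimensional current $L$, so it cannot support any boundary.

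Finally, to conclude $\partial L=0$: $\partial L$ is an integral current of dimension $2$ whose support is contained in $\overline{L^\circ}\setminus L^\circ$, since $\partial L$ vanishes on $L^\circ$ (there $L$ is a genuine boundaryless smooth submanifold, being the graph of a smooth map over the open polytope) — more precisely, for any smooth compactly supported $2$-form $\alpha$ supported away from the vertex fibres, Stokes' theorem on the smooth manifold $L^\circ$ together with the fact that the graph of $\nabla\varphi$ has no boundary inside $\pi_1^{-1}(U)$ (the edges escape to infinity) gives $\partial L(\alpha)=L(d\alpha)=\int_{L^\circ}d\alpha=0$. Hence $\mathrm{spt}(\partial L)$ is contained in the union of the vertex fibres, a set which, being contained in a finite union of compact $2$-spheres-times-bounded-sets over isolated points, has locally finite $\mathcal{H}^2$-measure but — crucially — has vanishing $(2-1)=1$-dimensional structure in the directions transverse to these spheres; I would then invoke the standard constancy/support-dimension theorem (an integral $(m)$-current supported in a set of finite $\mathcal{H}^m$-measure that is $\mathcal{H}^{m}$-rectifiable can be nonzero, but one supported in a set of $\mathcal{H}^{m}$-measure zero must vanish) applied to $\partial L$ with $m=2$: since the vertex fibres have $\mathcal{H}^2$-measure zero after removing the $\Sigma_i$-directions, and $\partial L$ cannot have any component along $\Sigma_i$ itself (this would force $\partial(\partial L)\ne 0$ or violate the special Lagrangian orientation matching along the asymptotic cylinder $L_i$, which is boundaryless), we conclude $\partial L=0$. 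The main obstacle I anticipate is precisely this last dimension-counting step: making rigorous that the part of $\overline{L^\circ}$ over a vertex is genuinely lower-dimensional in the sense needed — which is really the content of the tangent cone analysis of the next lemma — so in a clean write-up one would either first prove the smoothness at vertices (Lemma \ref{smoothness}) and deduce $\partial L=0$ a posteriori, or give a direct slicing argument showing the slice of $L$ by $\{u_1=\text{const}\}$ near a vertex is a single circle, hence closes up.
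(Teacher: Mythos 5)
There are two genuine gaps. First, your mass estimate is wrong. You claim $\mathbf{M}(L)<\infty$ globally, which is false: $L$ has $n$ asymptotically cylindrical ends, so only \emph{local} mass finiteness can hold, and that is exactly what is needed near the vertex fibres. More importantly, the quantity $\int_U V\,du_1du_2$ is the Lebesgue area of the gradient image $\nabla\varphi(U)$ (this is what Lemma \ref{infinite-wedge} uses), not the mass of the graph. By the calibration identity, $\mathrm{Mass}(L\cap K)=\int_{L^\circ\cap K}\mathrm{Re}(\Omega)=2\pi\int_{(L^\circ\cap K)/U(1)}(du_1\wedge dy_2-du_2\wedge dy_1)$, and on the graph of $\nabla\varphi$ this integrand is $(\varphi_{11}+\varphi_{22})\,du_1\wedge du_2$ — the \emph{trace} of $D^2\varphi$, not its determinant $V$ — whose integral over $U$ actually diverges (consistent with the infinite ends). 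The correct argument, which the paper gives, is that by strict convexity the maps $(u_1,u_2)\mapsto(u_1,y_2)$ and $(u_1,u_2)\mapsto(y_1,u_2)$ are injective, so each term is the Lebesgue area of a bounded planar region once $y$ is confined to a compact set; this also yields the quantitative bound $O(r^2)$ near a vertex fibre (using $|u-p_i|\leq Cr^{2}$ on the metric ball $B_X(p_i,r)$) that the paper's proof of $\partial L=0$ relies on.

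Second, your description of the set over the vertices is incorrect, and this muddles your dimension count. The $2$-sphere $\Sigma_i=\pi^{-1}[p_i,p_{i+1}]$ sits over the whole edge, not over the vertex; the fibre of $X$ over the single point $p_i$ is the $U(1)$-fixed point $o_i$, so $\overline{L^\circ}\setminus L^\circ$ is contained in $\bigcup_i\{o_i\}\times\partial C_{p_i}$, a finite union of boundary curves of convex sets, hence a set of Hausdorff dimension $1$. There are no ``spheres-times-bounded-sets'' here, and statements like ``vanishing $(2-1)$-dimensional structure transverse to these spheres'' and ``orientation matching along $L_i$'' do not constitute a support theorem. The salvageable core of your idea is this: once local mass finiteness is established, $L$ is a locally rectifiable, hence locally flat, $3$-chain, so $\partial L$ is a locally flat $2$-chain supported in an $\mathcal{H}^2$-null set, and the flat-chain support theorem (Federer 4.1.20) then gives $\partial L=0$. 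Stated that way, this is a legitimate and arguably cleaner alternative to the paper's route, which instead integrates by parts against a cutoff $\chi_r$ concentrated on $B_X(p_i,r)\times\mathbb{R}^2$ and beats the $r^{-1}$ gradient of the cutoff with the $O(r^2)$ mass decay. Your part of the argument identifying the support via Proposition \ref{prop:gradientdivergence} (no accumulation over open edges) is correct and agrees with the paper. Your closing suggestion to prove smoothness at the vertices first is unnecessary: no tangent-cone analysis is needed for $\partial L=0$, only the mass estimate and the $\mathcal{H}^2$-null support.
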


\begin{proof}
We first show that $L$ has locally finite mass inside any given ball in $Z$. Since the gradient $\nabla \varphi$ diverges to infinity near the open edges $(p_i,p_{i+1})$, we only need to show that mass cannot accumulate near the vertices $p_i$. Now by the special Lagrangian condition, for any $U(1)$-invariant compact set $K\subset Z$,
\[
\text{Mass}(L\cap K) =\int_{L^\circ\cap K} \text{Re}(\Omega) =2\pi\int_{(L^\circ \cap K)/U(1)} (du_1\wedge dy_2-du_2\wedge dy_1).
\]
Since $\varphi$ is a smooth strictly convex function, both integrands are positive, and the maps $(u_1, u_2)\mapsto (u_1, y_2)$ and $(u_1, u_2)\mapsto (y_1, u_2)$ are injective, so $\int_{(L^\circ \cap K)/U(1)} du_1\wedge dy_2$ is simply the Lebesgue measure of the $(u_1,y_2)$ projection of ($L^\circ \cap K)/U(1)$, and likewise with the second integrand. 
By the boundedness of the range of $(u_1, y_2), (u_2,y_1)$ on $L^\circ \cap K$, we deduce that the integral is finite.

The support of $L$ is contained in the closure of $L^\circ$.
Since the gradient $\nabla \varphi$ diverges to infinity, there cannot be any limiting point in the support of $L$ whose projection to $\mathbb{R}^2_{(u_1,u_2)}$ lies above the open edges $(p_i,p_{i+1})$. Thus, the only points added in the closure lie above the vertices $p_i$, and by Lemma \ref{close-to-vertices}, they lie above $\{p_i\}\times \partial C_{p_i}$.

Let $\psi$ be any smooth and compactly supported test 2-form on $Z$, and for small $r>0$, let $\chi_r$ be a cutoff function supported in the union of  $B_{X^4}(p_i, r)\times \mathbb{R}^2\subset Z$, with value one on the union of $B_{X^4}(p_i,r/2)\times \mathbb{R}^2$, and the $g_X$-gradient of $\chi_r$ is bounded by $Cr^{-1}$. Since $(1-\chi_r)\psi$ is supported away from the $U(1)$-fixed locus, integration by parts gives
\[
\int_{L} \chi_rd \psi= - \int_L d\chi_r \wedge \psi= - \int_{L^\circ} d\chi_r \wedge \psi ,
\]
hence
\[
|\int_{L} \chi_rd \psi|\leq C(\psi) r^{-1} \sum_{i=1}^n \text{Mass}(L^\circ\cap  \text{supp}   (\psi)\cap B_{X^4}(p_i,r)\times \mathbb{R}^2) 
\]
Now $y_1,y_2$ are bounded within the support of $\psi$, and from the Gibbons-Hawking ansatz we know $|u-p_i|\leq C r^2$ on $B_{X^4}(p_i,r)$.
The same argument as in the local finiteness of measure now gives a bound $|\int_{L} \chi_rd \psi|\leq C(\psi)r$. Taking the limit $r\to 0$, we deduce that $\int_{L} d \psi=0$ for any test 2-form, which means $\partial L=0$.
\end{proof}

Now, we prove the smoothness. $L$ is a special Lagrangian integral current and, in particular, a minimal integral current. Therefore, there exists a tangent cone at each point $x$ on $L$. The proof of smoothness is based on the following implication of Allard's regularity theorem.

\begin{proposition}\label{smooth-points}
    A point $x \in \text{supp}(L)$ is a smooth point if and only if every tangent cone $N \subset \mathbb{C}^3$ at $x$ is a $3$-plane with multiplicity one.
\end{proposition}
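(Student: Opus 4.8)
The plan is to derive both directions from Allard's regularity theorem, using the special Lagrangian (hence area-minimizing, hence stationary integral varifold) structure of $L$ established in Lemma \ref{no-boundary}. Recall that since $L$ is a special Lagrangian integral current with $\partial L = 0$, it is calibrated, hence area-minimizing, hence its associated varifold is stationary with locally finite mass; and because it is area-minimizing in $\mathbb{C}^3 \cong \mathbb{R}^6$, its density $\Theta(L,x)$ exists, is upper semicontinuous, and is $\geq 1$ at every point of the support, with tangent cones existing at every point and themselves being area-minimizing cones.

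For the ``if'' direction: suppose every tangent cone $N$ at $x$ is a multiplicity-one $3$-plane. Then the density $\Theta(L,x) = 1$. By Allard's regularity theorem, there is an $\varepsilon = \varepsilon(n)$ such that if the density ratio at some small scale is within $\varepsilon$ of $1$, then $L$ is a $C^{1,\alpha}$ graph — hence a smooth embedded submanifold by elliptic regularity for the minimal surface system (or the special Lagrangian equation) — in a neighborhood of $x$. To apply this I would use monotonicity of the mass ratio: since the tangent cone is a plane of multiplicity one, the rescaled densities $\Theta(L, x, r) \to 1$ as $r \to 0$, so for $r$ small enough the Allard smallness hypothesis is met. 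One subtlety: Allard's theorem a priori requires the tangent cone to be unique, or at least requires only smallness of the density ratio at a single scale, which monotonicity provides regardless of uniqueness of the tangent cone; so I would phrase this carefully, invoking that $\lim_{r\to 0}\Theta(L,x,r)=\Theta(L,x)=1$ together with upper semicontinuity forces the density ratio below $1+\varepsilon$ at small scales.

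For the ``only if'' direction: suppose $x$ is a smooth point, so near $x$ the support of $L$ is an embedded $C^1$ (indeed smooth) submanifold with some integer multiplicity $m$. Then the tangent cone at $x$ is the tangent plane $T_x L$ counted with multiplicity $m$, and this is the \emph{unique} tangent cone. It remains to see $m = 1$: since $L^\circ$ is an embedded multiplicity-one special Lagrangian and $x$ either lies on $L^\circ$ (where $m=1$ is clear) or lies above a vertex $p_i$, in which case $L$ is the closure of the embedded hypersurface $L^\circ$, whose approach to $x$ is through a single sheet (the graph of $\nabla\varphi$ has multiplicity one, and $L^\circ$ is a circle bundle over it), so the current $L$ still has multiplicity one near $x$. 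Hence the tangent cone is $T_x L$ with multiplicity one, a $3$-plane.

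The main obstacle is the ``if'' direction's careful handling of possibly non-unique tangent cones: Allard's theorem is typically stated in terms of a density ratio bound at one scale rather than assuming all tangent cones are planes, so I expect the real work is the short argument that ``all tangent cones are multiplicity-one planes'' $\Rightarrow$ ``$\Theta(L,x)=1$'' $\Rightarrow$ (by monotonicity) ``density ratio $< 1+\varepsilon$ at some small scale'' $\Rightarrow$ Allard applies. A secondary point to get right is that $C^{1,\alpha}$ regularity from Allard bootstraps to full smoothness: this follows because the special Lagrangian condition is (after writing $L$ locally as a graph) a second-order elliptic PDE with analytic coefficients, so Schauder estimates and elliptic regularity upgrade $C^{1,\alpha}$ to $C^\infty$.
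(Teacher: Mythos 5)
Your proposal is correct and follows the same route the paper intends: the paper states this proposition as a direct consequence of Allard's regularity theorem without writing out the details, and your argument (all tangent cones multiplicity-one planes $\Rightarrow$ $\Theta(L,x)=1$ $\Rightarrow$ mass-ratio smallness at some scale by monotonicity $\Rightarrow$ Allard gives a $C^{1,\alpha}$ graph $\Rightarrow$ elliptic bootstrapping gives smoothness, plus the converse via uniqueness of the tangent plane at a regular point) is exactly the standard way to fill them in. The only point worth stating explicitly is the one you already touch on, that the multiplicity at a smooth point must be one because $L^{\circ}$ is embedded with multiplicity one and the extra points of $\mathrm{supp}(L)$ above the vertices form a set of Hausdorff dimension one, hence of measure zero in any smooth $3$-dimensional sheet.
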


Let $q(x) \in \{p_i\} \times \partial C_{p_i}$. Any tangent cone $N \subset \mathbb{C}^3$ at $x$ is a $U(1)$-invariant tangent cone in $\mathbb{C}^3$. To prove every tangent cone of $L$ is a $3$-plane with multiplicity one, we employ Joyce's classification of $U(1)$-invariant special Lagrangian cones in $\mathbb{C}^3$ \cite{MR2122282}.

\begin{proposition} [Joyce \cite{MR2122282}, Haskins \cite{MR2075484}]\label{Joyce-Haskins} 
Let $N$ be a special Lagrangian cone without boundary in $\mathbb{C}^3$ invariant under the U(1)-action given by
    \begin{align*}
        e^{i \theta} : (z_1, z_2, z_3) \to (e^{i \theta} z_1, e^{-i \theta} z_2, z_3), \quad \text{for} \quad e^{i \theta} \in U(1),
    \end{align*} 
where $N \setminus \{0\}$ is connected. Then there exists $A \in [-1, 1]$ and functions $w : \mathbb{R} \rightarrow (-1,1)$, and $\alpha, \beta : \mathbb{R} \rightarrow \mathbb{R}$ satisfying the following system of differential equations:
\begin{alignat*}{2}
&(\frac{dw}{dt})^2 = 4((1-w)^2(1+2w)-A^2),  \quad \quad &&\frac{d\alpha}{dt} = \frac{A}{1-w}, \\
&\frac{d\beta}{dt} = \frac{-2A}{1+2w},  &&(1-w)(1+2w)^{\frac{1}{2}}\cos(2\alpha+\beta) = A,
\end{alignat*}
such that, away from points $(z_1, z_2, z_3) \in \mathbb{C}^3$ with $z_j = 0$ for some $j$, we may locally write $N$ in the form $\Phi(r, s, t) : r > 0, s, t \in \mathbb{R}$, where
\[
\Phi : (r, s, t) \mapsto (re^{i(\alpha(t)+s)} \sqrt{1 - w(t)}, e^{i(\alpha(t)-s)}\sqrt{1 - w(t)},  re^{i\beta(t)} \sqrt{1 + 2w(t)}),
\]
and exactly one of the following holds:
\begin{itemize}
  \item $A = 1$. Then, $N$ is the $U(1)^2$-invariant special Lagrangian $\mathbb{T}^2$-cone 
  \begin{align*}
      \{ (r e^{i\theta_1}, r e^{i\theta_2}, r e^{i\theta_3}) \quad | \quad r > 0, \quad \theta_1, \theta_2, \theta_3 \in \mathbb{R}, \quad \theta_1 + \theta_2 + \theta_3 = 0
      \}.
  \end{align*}

    \item $A = -1$. Then, $N$ is the $U(1)^2$-invariant special Lagrangian $\mathbb{T}^2$-cone 
    \begin{align*}
      \{ (r e^{i\theta_1}, r e^{i\theta_2}, r e^{i\theta_3}) \quad | \quad r > 0, \quad \theta_1, \theta_2, \theta_3 \in \mathbb{R}, \quad \theta_1 + \theta_2 + \theta_3 = \pi
      \}.
    \end{align*}

  \item $A = 0$. Then, for some $\phi \in (-\pi, \pi]$, either $N = \Pi_{\phi}^+$ or $N = \Pi_{\phi}^-$ or $N$ is the singular union $\Pi_{\phi}^+ \cup \Pi_{\phi}^-$, where $\Pi_{\phi}^{\pm}$ are the special Lagrangian 3-planes
  \begin{align*}
      \Pi_{\phi}^+ = \{(z,ie^{-i\phi}\overline{z},re^{i\phi}) | z \in \mathbb{C}, r \in \mathbb{R}\}, \; \text{ and } \; \Pi_{\phi}^- = \{(z,-ie^{-i\phi}\overline{z},re^{i\phi}) | z \in \mathbb{C}, r \in \mathbb{R}\}.
  \end{align*}
  
  \item $0 < |A| < 1$. Then, the function $w(t)$ may be written in terms of the Jacobi elliptic functions. It is non-constant and periodic in $t$ with period $T$ depending only on $A$, and $2\alpha + \beta$ is also non-constant and periodic in $t$ with period $T$. 
\end{itemize}
\end{proposition}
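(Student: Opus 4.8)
**Proof proposal for Proposition \ref{Joyce-Haskins} (Joyce–Haskins classification of $U(1)$-invariant SL cones in $\mathbb{C}^3$).**

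The plan is to reduce the special Lagrangian cone equation to an ODE system by exploiting the $U(1)$-symmetry together with the cone (scaling) symmetry, and then to analyze the resulting integrable ODE system. First I would parametrize the link $N\cap S^5$ of the cone: away from the coordinate hyperplanes $\{z_j=0\}$ one can write $z_1 = r e^{i\theta_1}\sqrt{1-w}$, $z_2 = r e^{i\theta_2}\sqrt{1-w}$, $z_3 = r e^{i\theta_3}\sqrt{1+2w}$ with $w\in(-1,1)$ determined so that the three moduli satisfy the constraint $|z_1|^2+|z_2|^2+|z_3|^2 = r^2$ (coming from the cone being a union of rays, after fixing the radial coordinate $r$). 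The $U(1)$-action rotates $\theta_1\mapsto\theta_1+\theta$, $\theta_2\mapsto\theta_2-\theta$, $\theta_3\mapsto\theta_3$, so the invariant combinations are $2\alpha := \theta_1+\theta_2$ (the "diagonal" phase in the first two slots, which will depend on the orbit parameter $t$), $2s := \theta_1-\theta_2$ (the $U(1)$-orbit direction), and $\beta := \theta_3$; the functions $w,\alpha,\beta$ then descend to functions on the one-dimensional quotient of the link by $U(1)$, parametrized by $t$.

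Next I would impose the two defining conditions. The Lagrangian condition $\omega|_N = 0$, when combined with the cone and $U(1)$ symmetries and the requirement that $N$ be actually Lagrangian (not just isotropic of lower dimension), forces a first-order relation tying $\dot\alpha$ to $w$ via a conserved "angular momentum" constant $A$: precisely $\dot\alpha = A/(1-w)$ and $\dot\beta = -2A/(1+2w)$, where $A$ arises as the momentum map value for the extra $U(1)$-symmetry acting within the link, so $|A|\le 1$ follows from $w\in(-1,1)$ and a boundedness estimate. The special condition $\operatorname{Im}\Omega|_N = 0$, i.e. $\operatorname{Im}(dz_1\wedge dz_2\wedge dz_3)|_N = 0$, becomes the algebraic relation $(1-w)(1+2w)^{1/2}\cos(2\alpha+\beta) = A$ after substituting the parametrization and simplifying; differentiating this relation in $t$ and using the formulas for $\dot\alpha,\dot\beta$ then yields the autonomous ODE $\dot w^2 = 4\big((1-w)^2(1+2w) - A^2\big)$. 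This is the heart of the computation and the step I expect to be the main obstacle — the trigonometric and algebraic bookkeeping in translating $\operatorname{Im}\Omega|_N=0$ into the stated relations, and checking that the parametrization $\Phi$ really does recover all of $N$ away from the coordinate hyperplanes (one must argue that $N\setminus\{0\}$ being connected forbids gluing distinct branches, and handle the components of $N$ that do meet some $\{z_j = 0\}$ separately by a limiting/closure argument).

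Finally I would analyze the phase portrait of $\dot w^2 = 4(P(w)-A^2)$ where $P(w) = (1-w)^2(1+2w)$ on $[-1,1]$; note $P$ has maximum value $1$ at $w=0$ (a double root of $P-1$) and $P(-1/2)=0$, $P(1)=0$. The four cases are exactly the cases in the picture of this cubic: (i) $A^2 = 1$ forces $w\equiv 0$, and then $2\alpha+\beta$ is constant while $\dot\alpha,\dot\beta$ are constant, producing the two $T^2$-cones $\{\theta_1+\theta_2+\theta_3 = 0\}$ or $=\pi$ according to the sign of $A$ (the sign of $A$ fixing which of $\cos(2\alpha+\beta)=\pm 1$ holds, hence the phase shift); (ii) $A = 0$: then $\cos(2\alpha+\beta)=0$, so $2\alpha+\beta \equiv \pm\pi/2$, $\alpha,\beta$ are constant, $w$ can be anything along a level set but the vanishing of $A$ decouples the equations so $N$ is linear — one gets the 3-planes $\Pi_\phi^\pm$ (the sign being the two choices of $2\alpha+\beta$), and the possibility that $N$ is their singular union comes from allowing $N\setminus\{0\}$ disconnected, which is why this is listed as an exceptional case; (iii) $0 < |A| < 1$: then $P(w) - A^2$ has three simple real roots in $(-1,1)$ (since $A^2\in(0,1)$ cuts the graph of $P$ transversally), $w$ oscillates between two consecutive roots, and the quadrature $t = \int dw/\big(2\sqrt{P(w)-A^2}\big)$ is an elliptic integral, so $w$ is a Jacobi elliptic function with period $T = T(A)$; periodicity of $2\alpha+\beta$ with the same period $T$ follows from the algebraic relation $\cos(2\alpha+\beta) = A/\big((1-w)(1+2w)^{1/2}\big)$, and non-constancy of both $w$ and $2\alpha+\beta$ follows because $A^2\ne 1$ excludes the double-root locus $w\equiv 0$. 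This exhausts all cases and completes the classification.
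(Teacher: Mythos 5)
This proposition is not proved in the paper at all: it is imported verbatim from Joyce and Haskins and used as a black box, so there is no internal argument to measure your proposal against. That said, your sketch does follow the strategy of the cited sources: reduce by the $U(1)$- and scaling symmetries to a curve in the quotient of the link, observe that $|z_1|=|z_2|$ on the cone because the moment map $\tfrac12(|z_1|^2-|z_2|^2)$ of the action must vanish by homogeneity, derive an integrable ODE system with the conserved quantity $A=(1-w)(1+2w)^{1/2}\cos(2\alpha+\beta)$, and classify orbits through the phase portrait of $\dot w^2=4\bigl(P(w)-A^2\bigr)$ with $P(w)=(1-w)^2(1+2w)$. Your treatment of $A^2=1$ (forcing $w\equiv 0$ and the two $\mathbb{T}^2$-cones) and the identification of $|A|\le 1$ from the bound $(1-w)(1+2w)^{1/2}\le 1$ are correct. (Your parametrization also silently corrects what appears to be a typo in the statement: the second component of $\Phi$ should carry a factor of $r$, otherwise $|z_1|\ne|z_2|$ and $N$ is not a cone.)

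As a proof, however, the proposal is incomplete and contains several errors. First, the step you yourself flag as the main obstacle --- translating $\omega|_N=0$ and $\text{Im}(\Omega)|_N=0$ into the stated ODEs and showing that $\Phi$ exhausts $N$ away from the coordinate hyperplanes --- is essentially the entire content of the theorem, and it is deferred rather than carried out. Second, the cubic $P(w)-A^2=2w^3-3w^2+(1-A^2)$ has, for $0<A^2<1$, only two roots in the admissible interval $(-1/2,1)$ (admissible because $|z_3|^2=r^2(1+2w)\ge 0$); the third root lies in $(1,3/2)$, not in $(-1,1)$ as you claim. The conclusion that $w$ oscillates between the two admissible roots is correct, but your stated reason is not. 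Third, your explanation of the singular union in the $A=0$ case is wrong: $\Pi_\phi^+$ and $\Pi_\phi^-$ intersect along the real line $\{(0,0,re^{i\phi})\}$, so $\Pi_\phi^+\cup\Pi_\phi^-$ minus the origin is connected, and this case is \emph{not} an artifact of dropping the connectedness hypothesis --- which is precisely why the proposition must list it. Fourth, in the $A=0$ case $w$ is not ``anything along a level set''; it evolves monotonically by $\dot w^2=4(1-w)^2(1+2w)>0$ and sweeps out $(-1/2,1)$ along each plane. None of these issues changes the final list of cases, but they would need to be repaired before the sketch could stand as a proof; for the purposes of this paper the correct course is simply to cite Joyce and Haskins, as the authors do.
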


We proceed by ruling out every possibility on Joyce's list except 3-planes. We do this using the following lemma.

\begin{lemma}\label{tangent-cone-wedge}
Let $N \subset \mathbb{C}^3$ be a special Lagrangian tangent cone of $L$ at $x$, where $q(x) \in p_i \times \partial C_{p_i}$. Let $U = \pi_1(N/U(1)) \subset \mathbb{R}_{(u_1,u_2)}^2$. The set  $U$ is a subset of the infinite wedge with vertex $p_i$ and two rays along the direction $\overrightarrow{p_ip_{i+1}}$ and $\overrightarrow{p_ip_{i-1}}$.

\end{lemma}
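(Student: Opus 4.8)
The plan is to use the fact that tangent cones are obtained as rescaled limits, and that the geometric structures on $Z$ degenerate in a controlled way near the $U(1)$-fixed point above $p_i$ to flat $\mathbb{C}^3$. Concretely, pick Gibbons-Hawking-type coordinates centered at $p_i$ and rescale by a factor $\lambda \to \infty$; the metric $g_X$ on $B_{X^4}(p_i, r)$ converges to the flat metric on $\mathbb{C}^2$ (the $A_{n-1}$ singularity is locally modeled on $\mathbb{C}^2/\mathbb{Z}_n$, but we are at a smooth point, so after rescaling we see all of $\mathbb{C}^2$), and $Z = X \times \mathbb{R}^2$ rescales to $\mathbb{C}^2 \times \mathbb{C} = \mathbb{C}^3$ with its flat Calabi-Yau structure. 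Under the dimensional reduction, the projection $\pi_1$ of $L^\circ/U(1)$ is contained in $U$, the convex polygon with vertices $p_1, \dots, p_n$; after translating $p_i$ to the origin and rescaling, $U$ converges (in the sense of sets, locally uniformly) to the tangent cone of the convex set $U$ at the vertex $p_i$, which is precisely the infinite wedge $W$ with apex at the origin spanned by the two edge directions $\overrightarrow{p_i p_{i+1}}$ and $\overrightarrow{p_i p_{i-1}}$. Since $\pi_1$ is continuous and commutes with the rescaling (which acts linearly on $\mathbb{R}^2_{(u_1,u_2)}$), we get $\pi_1(N/U(1)) \subseteq W$ by passing to the limit.

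The steps, in order: (1) Set up the rescaled coordinates near $x$ and identify the rescaled limit of $(Z, \omega, \Omega)$ with flat $\mathbb{C}^3$, noting that the moment map $u_3$ for the residual $U(1)$-action rescales to the standard $U(1)$-moment map on $\mathbb{C}^3$ (the one in Proposition \ref{Joyce-Haskins}); in particular the reduced space $Z_{\mathrm{red}}$ rescales to $\mathbb{R}^4_{(u_1,u_2,y_1,y_2)}$ with the degenerate reduced structure, and $u_3 \equiv 0$ persists in the limit. (2) Observe that $L^\circ$ has, in the rescaled pictures, its $\pi_1$-image contained in $\lambda(U - p_i) \cap (\text{ball})$; record that $\lambda(U - p_i) \to W$ as $\lambda \to \infty$ because $U$ is convex with $p_i$ a vertex, and that this convergence is monotone (each $\lambda(U-p_i)$ is contained in $W$ already, since $U \subseteq p_i + W$). (3) Recall that the tangent cone $N$ is a subsequential limit of the rescaled $L$'s as currents; since $\pi_1$ is Lipschitz and proper on $u_3^{-1}(0)/U(1)$, the support of $N/U(1)$ has $\pi_1$-image contained in the limit of the $\pi_1$-images, hence in $W$. (4) Conclude $U = \pi_1(N/U(1)) \subseteq W$.

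The main obstacle I expect is step (3): making precise the claim that $\pi_1$ of the support of a current limit is contained in the limit of the $\pi_1$-images, because $\pi_1$ is only the projection after taking the $U(1)$-quotient and the quotient map $q$ degenerates along the fixed locus. One needs to argue at the level of supports: if $x_k \in \mathrm{supp}(L_{\lambda_k})$ converges to a point $y \in \mathrm{supp}(N)$, then $\pi_1(q(x_k)) \in \lambda_k(U - p_i)$ and these converge to $\pi_1(q(y))$ (using that $q$ and $\pi_1$ are continuous away from the fixed locus, and handling the fixed locus separately since there $\pi_1 \circ q$ extends continuously by sending the fixed point to the origin). Combined with $\lambda_k(U-p_i) \subseteq W$ for all $k$ and $W$ closed, this gives $\pi_1(q(y)) \in W$. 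Since support of $N$ consists of limits of such $x_k$, we are done. The convexity input — that a convex polygon sits inside the tangent wedge at each of its vertices — is elementary and is the geometric heart of why the wedge, rather than a larger region, appears.
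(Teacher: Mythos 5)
Your proposal is correct and follows essentially the same route as the paper: the paper's (very brief) proof likewise observes that the convex polygon $\overline{U}$ lies inside the wedge $W$ with vertex $p_i$, that the $\pi_1$-projections of all rescalings of $L$ about the base point $x$ therefore remain in $W$, and that this containment passes to the limit tangent cone. The additional care you take with the convergence of supports of the rescaled currents and with the behaviour of $\pi_1\circ q$ at the fixed locus fills in details the paper leaves implicit rather than constituting a different argument.
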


\begin{proof}
The image of $L$ in $\mathbb{R}^2_{(u_1,u_2)}$ is a convex polygon. Let $W$ be the infinite wedge with vertex at $p_i$, and two boundary rays $\overrightarrow{p_ip_{i+1}}$ and $\overrightarrow{p_ip_{i-1}}$. In particular its openning angle is less than $\pi$.  
 The $\pi_1$-projections of all the special Lagrangians obtained by rescaling $L$ around the base point $x$, are all contained in $W$, so by passing to the limit,  the same holds for the projection of the tangent cone $N$ to $\mathbb{R}^2_{(u_1,u_2)}$.
\begin{figure}[H]
\includegraphics[width=10cm]{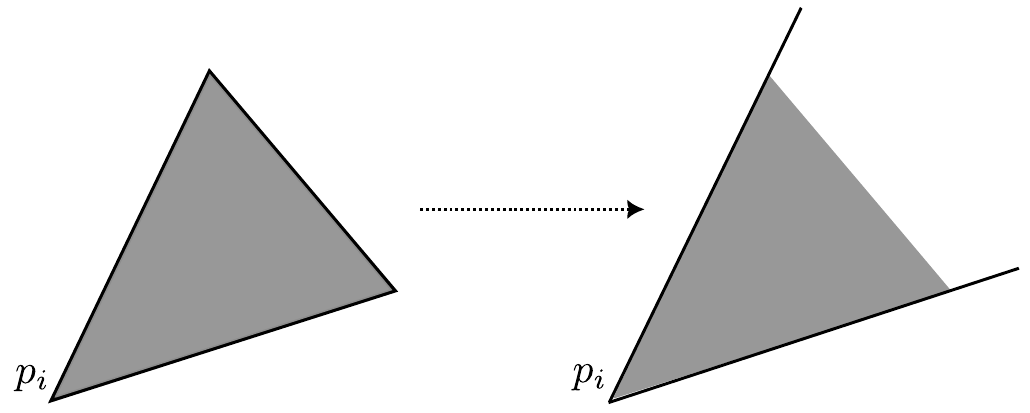}
\centering
  \caption{The $\pi_1$-projection of a tangent cone at $p_i$.}
  \label{tangent-cone}
\end{figure}
\end{proof}

\begin{lemma}\label{smoothness}
    Let $N \subset \mathbb{C}^3$ be a special Lagrangian tangent cone at $x$ in $L$, where $q(x) \in \{p_i\} \times \partial C_{p_i}$. Then, $N$ is a 3-plane with multiplicity 1.
\end{lemma}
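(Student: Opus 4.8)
The plan is to go down Joyce's list (Proposition \ref{Joyce-Haskins}) and eliminate every case except the multiplicity-one 3-plane, using two key constraints: the wedge containment of Lemma \ref{tangent-cone-wedge}, and the fact that $N$ must be a \emph{boundaryless} tangent cone arising as a limit of the smooth special Lagrangian $L^\circ$, whose $\pi_1$-projection is a convex polygon and whose $\pi_2$-projection has the amoeba-like shape controlled by Lemma \ref{infinite-wedge}. First I would record that since $N$ is a tangent cone of an integral current without boundary (Lemma \ref{no-boundary}), $N$ itself is a special Lagrangian cone without boundary, so Proposition \ref{Joyce-Haskins} applies to each connected component of $N\setminus\{0\}$; moreover $N$ is $U(1)$-invariant for the weight $(1,-1,0)$ action because the $U(1)$-action on $Z$ fixes $q(x)$ (it lies over a monopole point $p_i$) and acts on the tangent space $\mathbb{C}^3$ with these weights, the third coordinate being the $\mathbb{R}^2_{(y_1,y_2)}$-directions which are fixed.

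Next I would eliminate the cases $|A|=1$ and $0<|A|<1$. For these, $w(t)$ and $2\alpha+\beta$ are (in the elliptic case) non-constant and periodic, and the cone $N$ is genuinely ``three-dimensional'' in all of $\mathbb{C}^3$: in particular the $z_3 = re^{i\beta(t)}\sqrt{1+2w(t)}$ coordinate, together with the $z_1,z_2$ coordinates, sweeps out a set whose projection to the base $\mathbb{R}^2_{(u_1,u_2)}$ — which is the image of $(|z_1|^2-|z_2|^2, \text{something})$ under the Gibbons-Hawking moment-map picture, i.e. the reduced picture identifies $\pi_1$ with a moment map for the $(1,-1,0)$ action — is \emph{not} contained in any wedge of opening angle $<\pi$. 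Concretely, for the $\mathbb{T}^2$-cone ($A=\pm1$) the moment map image is a half-line or a full sector that is too large; for $0<|A|<1$ the periodicity of $w$ forces the projection to wind around and fill a region violating the wedge bound of Lemma \ref{tangent-cone-wedge}. I would make this precise by computing the moment map $\mu_{(1,-1,0)} = \tfrac12(|z_1|^2-|z_2|^2)$ restricted to $N$ and observing the second reduced coordinate ranges over an interval forcing the image to exceed the wedge. This is the step I expect to be the main obstacle: one has to match the abstract parametrization $\Phi(r,s,t)$ in Joyce's coordinates with the concrete $\pi_1$-projection coming from $Z_{\mathrm{red}}$, and show the periodic cases genuinely overflow the wedge; the cleanest route is probably to note that a periodic non-planar $U(1)$-invariant SL cone has $\pi_1$-image equal to all of $\mathbb{R}^2$ or a half-plane, both of which have opening angle $\geq\pi$.

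This leaves the case $A=0$: $N$ is $\Pi_\phi^+$, or $\Pi_\phi^-$, or the singular union $\Pi_\phi^+\cup\Pi_\phi^-$. Each $\Pi_\phi^\pm$ is a single 3-plane, so multiplicity one there is automatic (and the wedge condition forces a compatible $\phi$). The union $\Pi_\phi^+\cup\Pi_\phi^-$ is a singular cone: I would rule it out by a density/connectedness argument. Since $q(x)\in\{p_i\}\times\partial C_{p_i}$ lies on the \emph{boundary} of the subgradient set — so locally near $x$ the current $L$ looks like a boundary piece of the smooth graph $L^\circ$ — the tangent cone has Gaussian density strictly less than $2$ (it is the density of a half-space-type limit of a single sheet, namely $1$, or at worst a quantity in $[1,2)$ by the monotonicity formula applied to the one-sheeted graph), whereas $\Pi_\phi^+\cup\Pi_\phi^-$ has density $2$ at the origin. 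Alternatively and more robustly: the two planes $\Pi_\phi^+,\Pi_\phi^-$ have $\pi_1$-projections that are two distinct 2-planes through $p_i$ whose union is not contained in any wedge of angle $<\pi$ — indeed each plane projects onto all of $\mathbb{R}^2_{(u_1,u_2)}$ — again contradicting Lemma \ref{tangent-cone-wedge}. Hence $N=\Pi_\phi^\pm$ is a single 3-plane with multiplicity one, and by Proposition \ref{smooth-points}, $x$ is a smooth point. This completes the proof.
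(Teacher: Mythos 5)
Your overall strategy --- run through Joyce's list and eliminate everything except a single plane using the wedge containment of Lemma \ref{tangent-cone-wedge} --- is the same as the paper's, and your treatment of the torus cones $A=\pm 1$ agrees with its Step 1. But there are genuine gaps. The most serious is the multiplicity. You assert that for $N=\Pi_\phi^{\pm}$ ``multiplicity one is automatic''; it is not: a tangent cone of an integral current can be a plane with integer multiplicity $k\geq 2$, and nothing you have said excludes this. Your fallback, that the density at $x$ lies in $[1,2)$, is precisely what needs to be proved and does not follow from the monotonicity formula applied to ``the one-sheeted graph''; a priori mass could concentrate at $x$ from several sheets of $\overline{L^{\circ}}$ approaching the same limit point over the vertex. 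The paper closes this in its Step 3 by a calibration computation: integrating $\text{Re}(\Omega)$ against cutoff functions pulled back from the $(y_1,u_2)$-plane shows that the projection of $N$ to that plane has degree at most one counting multiplicity, which simultaneously forces $N$ to be connected, to have multiplicity one, and (via Proposition 5.5 of Joyce, where the $(y_1,u_2)$-projection has degree greater than one) rules out the Jacobi elliptic cones $0<|A|<1$.

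Two further points. The case $0<|A|<1$, which you yourself flag as the main obstacle, is left unproved in your write-up: the heuristic that the periodic cones ``overflow the wedge'' is plausible (indeed $z_1z_2=r^2(1-w(t))e^{2i\alpha(t)}$ with $\alpha$ strictly monotone, so a closed such cone sweeps out every angle in the $\pi_1$-image), but you would have to carry this out; the paper sidesteps it entirely with the degree argument above. Finally, your ``more robust'' exclusion of $\Pi_\phi^{+}\cup\Pi_\phi^{-}$ rests on a false computation: each plane $\Pi_\phi^{\pm}$ projects under $\pi_1$ to the ray $\pm\,\mathbb{R}_{\geq 0}(\sin\phi,\cos\phi)$, since $z_1z_2=\pm i|z|^2e^{-i\phi}$ there, \emph{not} onto all of $\mathbb{R}^2_{(u_1,u_2)}$ --- if a single plane projected onto all of $\mathbb{R}^2$ it would itself violate Lemma \ref{tangent-cone-wedge} and no admissible tangent cone could exist, contradicting the lemma you are proving. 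The correct statement, which is what the paper uses, is that the two rays are opposite, so the union projects onto a full line, which is not contained in any wedge of opening angle less than $\pi$. With the multiplicity step and the elliptic case repaired along these lines, the rest of your argument goes through.
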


\begin{proof} We apply Joyce's classification to the connected components of the tangent cone, and rule out every other possibility of the list of Proposition \ref{Joyce-Haskins}.

\textbf{Step 1 ($U(1)^2$-invariant $\mathbb{T}^2$-cone).} The cases $A = 1$ and $A = -1$ are similar, so we focus on $A = 1$. In this scenario,
\begin{align*}
    \pi_1(N/U(1)) = \{ r^2e^{-i\theta_3} \; | \; r > 0, \; \theta_3 \in \mathbb{R} \}.
\end{align*}
$r^2e^{-i\theta_3}$ can take any value in $\mathbb{C} = \mathbb{R}^2_{(u_1,u_2)}$, and consequently, $\pi_1(N/U(1)) = \mathbb{R}^2$. In particular, $\pi_1(N)$ is not subset of a wedge with angle less that $180^{\circ}$, which contradicts Lemma \ref{tangent-cone-wedge}.

\textbf{Step 2 (Union of two 3-planes).}  Suppose $\text{supp}(N)$ contains  $\Pi^{\phi}_+ \cup  \Pi^{\phi}_-$. We have $u_1(z, ie^{-i \varphi} \overline{z}) = \text{Re}(z_1z_2) = |z|^2 \sin (\varphi)$ and $u_2(z, ie^{-i \varphi} \overline{z}) = \text{Im}(z_1z_2) = |z|^2 \cos (\varphi)$. Hence
\begin{align*}
    \pi_1(\Pi^{\phi}_+) &= \{ r^2 (\sin (\varphi), \cos(\varphi)) \; | \; r \in \mathbb{R} \} \quad \text{ and } \quad \\\pi_1(\Pi^{\phi}_-) &= \{ -r^2 (\sin (\varphi), \cos(\varphi)) \; | \; r \in \mathbb{R} \}.
\end{align*} 
Therefore, $\pi_1(N/U(1)) = \{ R (\sin (\varphi), \cos(\varphi)) \; | \; R \in \mathbb{R}\}$ forms a line. In particular, it is not subset of a wedge with angle less than $180^{\circ}$, contradicting Lemma \ref{tangent-cone-wedge}.

\textbf{Step 3 (Multiplicity and graphicality).} The special Lagrangian $L$ projects with degree one to the $ (y_1,u_2)$-plane.  
As in Theorem 5.6. in \cite{MR2122282}, for any given cutoff function $\chi\geq 0$ on $\mathbb{R}^2$,
\begin{align*}
    \int_L \chi (y_1, u_2)  (\theta du_1 - V du_2 du_3) \wedge dy_1 = 2 \pi \int_{S} \chi(y_1,u_2) dy_1 du_2,
\end{align*}
where $S$ is the projection of $L/U(1)$ on the $(y_1, u_2)$-plane. By passing to a tangent cone $N$ of $L$ at $x$, we get:
\begin{align*}
    \int_N \chi (\text{Re}(z_3), \text{Im}(z_1 z_2)) \text{Re}(dz_1 \wedge dz_2) \wedge \text{Re}(dz_3) \leq 2 \pi \int_{\mathbb{R}^2} \chi(y_1,u_2) dy_1 du_2.
\end{align*}

In a small open neighborhood around a generic point $(y_1, u_2) \in \mathbb{R}^2$, the tangent cone $N$ is nonsingular and divides into $k$ components, therefore for $\chi$ supported near this point we get 
\begin{align*}
    \int_N \chi (\text{Re}(z_3), \text{Im}(z_1 z_2)) \text{Re}(dz_1 \wedge dz_2) \wedge \text{Re}(dz_3) = 2 \pi k \int_{\mathbb{R}^2} \chi(y_1,u_2) dy_1 du_2.
\end{align*}
Therefore, $k = 0$ or $1$. Thus the preimage of $(y_1, u_2)\in \mathbb{R}^2$ contains at most one point counting multiplicity. The same conclusion holds for the projections to $(y_2, u_1)$, and indeed any choice of direction of $y$ and the corresponding direction of $u$ specified by the partial Legendre transform. 
This forces that
there is at most one connected component in $N$, and it has multiplicity one.

Furthermore, following Proposition 5.5 in \cite{MR2122282}, in the Jacobi elliptic cone case $0<|A|<1$, the $(y_1,u_2)$ projection has degree greater than one, hence this case is ruled out. The only remaining possibility is that $N$ is a flat 3-plane with multiplicity one. 
\vspace{10pt}
\end{proof}

\subsection{Asymptotics of the special Lagrangians}\label{Asymptotic-behavior}

In this section, we prove $L$ has the expected asymptotic behavior. 

\begin{theorem}
Asymptotically near infinity, 
    $L$ is an exponentially small $C^k$ graph over the model special Lagrangian cylinders $\cup_{i=1}^n L_i$.
\end{theorem}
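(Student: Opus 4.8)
The plan is to analyze the solution $\varphi$ of the real Monge-Ampère equation \eqref{MA0} near each open edge $(p_i,p_{i+1})$ of the convex polygon $U$, and show that the gradient graph $L^\circ$ converges exponentially fast to the model cylinder $L_i$ as one moves toward infinity in $Z_{\mathrm{red}}$. Working near a fixed edge, I put $U$ into the standard position of Remark \ref{standardposition}: the edge lies on the $u_2$-axis, $U$ lies in the right half-plane $\{u_1>0\}$, and by subtracting an affine function the boundary data vanishes on this edge (so $c_i$ is absorbed). In these coordinates the relevant region of $L^\circ$ at infinity corresponds to $u_1 \to 0^+$ with $u_2$ in a compact subinterval of the edge bounded away from the vertices $p_i,p_{i+1}$, and the claim becomes: $\partial_{u_1}\varphi \to -\infty$ with a precise rate, $\partial_{u_2}\varphi \to 0$ exponentially in $\partial_{u_1}\varphi$, and all higher derivatives of $\varphi - (\text{model})$ decay exponentially.

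The key steps, in order. First, I would pin down the model: near the edge the function $V = A + \sum_j \tfrac{1}{2|u-p_j|}$ is smooth and bounded, and the $U(1)$-invariant special Lagrangian condition forces $V\,du_1\wedge du_2 = dy_1\wedge dy_2$; the one-dimensional model solution $\varphi_0(u_1)$ with $\varphi_0''(u_1) = \bar V(u_1)$ (where $\bar V$ is, say, the restriction of $V$ to the line $\{u_2 = u_2^*\}$, or more invariantly the relevant averaged profile) gives exactly the cylinder $L_i = \Sigma_i \times \{(p_{i+1}-p_i)\cdot y = c_i\}$ after reduction; this is the content of the computation in Section \ref{Preliminaries} showing $\omega|_{L_i}\equiv 0$, $\mathrm{Im}\,\Omega|_{L_i}\equiv 0$. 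Second, I would set up the change of variables replacing $u_1$ by the "time" coordinate along the gradient graph — concretely, use $y_1 = \partial_{u_1}\varphi$ (which by Proposition \ref{blowup} runs to $-\infty$) as the new variable, so the cylindrical end of $L^\circ$ is parametrized by a coordinate $t := -y_1 \to +\infty$. Third, I would write the deviation $\eta := \varphi - \varphi_{\mathrm{model}}$ and linearize: the Monge-Ampère equation $\det D^2\varphi = V$ becomes, after dividing by the model Hessian, a uniformly elliptic (in the rescaled cylindrical metric) equation for $\eta$ of the form $L_{\mathrm{model}}\eta = Q(D^2\eta)$ with $Q$ quadratically small; on the half-cylinder $[\,T,\infty)\times(\text{interval})$ this linearization has a positive spectral gap (the Laplacian on the cross-section plus the transverse operator), so a standard separation-of-variables / barrier argument forces $\eta$ and its derivatives to decay like $e^{-\delta t}$ for some $\delta > 0$. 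Fourth, I would bootstrap: once exponential decay in $C^0$ (or $C^{1,\alpha}$) is established via Alexandrov-type / maximum-principle barriers built from the model, interior Schauder estimates for the Monge-Ampère equation (Caffarelli, as already invoked in Step 3 of the Dirichlet problem) upgrade this to exponential decay in every $C^k$, which translates directly into the statement that $L$ is an exponentially small $C^k$ graph over $\cup_i L_i$.

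The main obstacle I expect is Step 3: establishing the spectral gap and the exact exponential rate $\delta$ for the linearized operator on the cross-section. The cross-section of the cylindrical end of $L$ is $\Sigma_i$, a $2$-sphere of finite area, and one must identify the linearization of the special Lagrangian (equivalently minimal-surface) operator on it and check that its first eigenvalue is strictly positive after accounting for the translational modes — the translations along $\mathbb{R}^2_{(y_1,y_2)}$ that appear as the $(n-1)$-parameter family in Theorem \ref{generalizationDonaldsonScaduto} are precisely the zero modes that must be projected out or absorbed into the choice of model (i.e., into the constant $c_i$ and the global translation), after which a genuine gap remains. A secondary technical point is handling the non-constancy of $V$ along the edge: $V$ is not translation-invariant in $u_2$, so the model is only asymptotically cylindrical and one needs the decay of $V - \bar V$ (which is smooth and bounded, with bounded derivatives, on the relevant region) to feed into the error term $Q$ without spoiling the gap argument; this is routine but must be tracked. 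Once the gap is in hand, the barrier construction and the bootstrap are standard, so the proof reduces to that linear-algebra-on-the-sphere computation plus invoking the already-cited Monge-Ampère regularity theory.
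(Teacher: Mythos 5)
Your strategy---linearize around a cylindrical model and run a spectral-gap/barrier argument---is a legitimate general template for asymptotically cylindrical problems, but it is not the paper's route, and as written it has several concrete gaps. First, your model is not right: a one-variable potential $\varphi_0(u_1)$ with $\varphi_0''=\bar V$ has $\det D^2\varphi_0=0$, not $V$; more fundamentally, the true model $L_{i,\mathrm{red}}=\{u_1=0,\ y_2=0\}$ is vertical over the $u$-plane (Proposition \ref{blowup} says the normal gradient diverges), so it is not the gradient graph of any potential on $U$, and the linearization must be set up after the partial Legendre transform, i.e. writing $(u_1,y_2)$ as functions of $(u_2,y_1)$---you gesture at this change of variables but do not carry the analysis out there. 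Second, before any linearization is meaningful you must know that $L$ is already $C^1$-close to the model near infinity; the paper extracts the preliminary rates $u_1=O(|y_1|^{-2})$ and $|y_2|=O(|y_1|^{-1})$ from convex geometry (Lemma \ref{infinite-wedge} together with the boundary continuity estimate (\ref{boundarycontinuity})) and only then bootstraps to $C^k$ smallness; your proposal never establishes this input, so the claim that the error term $Q(D^2\eta)$ is quadratically small is unjustified. Third, the cross-section of the end is all of $\Sigma_i$, including the two $U(1)$-fixed points over $p_i$ and $p_{i+1}$, where the reduced quasilinear system degenerates and interior elliptic bootstrapping fails; the paper must invoke Allard's regularity theorem with a mass bound precisely in that region, and your plan does not address it. Fourth, the spectral gap that you correctly identify as the crux is left open in your write-up.

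The paper's actual path to exponential decay is softer and avoids the spectral analysis of the linearized special Lagrangian operator altogether: once polynomial $C^k$ decay is in place, the coordinate transverse to the model plane $\{(p_{i+1}-p_i)\cdot y=c_i\}$ is a harmonic function on the minimal submanifold $L$ with finite Dirichlet energy on the end, and a divergence-theorem/Caccioppoli argument yields $F(R)\leq -CF'(R)$ for the tail energy $F(R)$, hence exponential decay; the translational zero mode you worry about is absorbed by the free constant $c$ in the Poincar\'e inequality on the cross-section. If you want to pursue your approach, you would still need the paper's Steps 1--2 (preliminary decay plus Allard regularity at the poles of $\Sigma_i$) as input, at which point the harmonic-coordinate argument is both shorter and dispenses with the eigenvalue computation on $\Sigma_i$.
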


We divide the proof into a few steps.

\textbf{Step 1 (Cauchy-Riemann type equation).} The region of $L$ close to spatial infinity must project to a small neighbourhood of one of the edges. Morever, Lemma \ref{infinite-wedge} shows that the projection to the $(y_1,y_2)$ plane is close to some ray $\mathcal{R}$, which we can without loss take to be inside $\{ (p_{i+1}- p_i)\cdot y=c_i  \}$. Up to rotating and translating the coordinates, we reduce the problem to the standard position, so the edge lies in $\{ u_1=0 \}$, and the ray is simply $\{ y_2=0 , y_1<0  \}$, and the region lies in  $\{ y_1<0, |y_1|\gg 1\}\cap \{ u_2\in [a,b]\} $, where $[a,b]$ describes the boundary edge.

Via the partial Legendre transform, we see the reduced Lagrangian $L_{red}$ is graphical over the $( u_2, y_1)$ variables, except at the vertices. The special Lagrangian condition can be rewritten as the Cauchy-Riemann type equation
\begin{align*} \label{quasi-elliptic}
    \frac{\partial y_2}{\partial y_1} = -\frac{\partial u_1}{\partial u_2}, \quad \quad \frac{\partial y_2}{\partial u_2} = V \frac{\partial u_1}{\partial y_1}.
\end{align*}
Then Lemma \ref{infinite-wedge} provides the preliminary decay estimate $|y_2|\leq \frac{C}{|y_1|}$. Morever, since $\varphi$ is a convex function with $C^{1/2}$ boundary modulus of continuity, we obtain 
\[
|y_1|u_1= -y_1 u_1= -\partial_{u_1} \varphi(u_1,u_2) u_1 \leq \varphi(0,u_2) -\varphi(u_1,u_2) \leq C u_1^{1/2},
\]
so we have the preliminary estimate $u_1\leq C|y_1|^{-2}$. Thus $L_{red}$ is a $C^0$-small graph over the $(u_2, y_1)$ variables, with decay rate estimate $O(|y_1|^{-1})$.

Now the Cauchy-Riemann type equation is quasi-linear elliptic, so away from the singular locus  $V = \infty$ corresponding to  $u_2\in \{ a,b \}$, we can bootstrap the smallness of the $C^0$-norm to smallness of the $C^k$-norm. More geometrically, the asymptotic model is the half-cylinder $\Sigma_i\times \mathbb{R}^+$. For any given $\epsilon>0$, we can find some $R\gg 1$, such that on the region $\{ y_1<-R, u_2\in [a+\epsilon, b-\epsilon]  \}$ away from the vertex, our special Lagrangian is a $C^k$-small perturbation of the model with $C^1$-norm bounded by $\epsilon$.

\textbf{Step 2 (Quantitative smoothness).} 
We need to prove quantitative smoothness estimate for $L_{red}$ near the vertex region, and in our coordinates this means $u_2$ close to the endpoints $a,b$ of the edges. 
This is based on the Allard's regularity theorem. Notice the ambient manifold has bounded geometry in our region of interest.

\begin{proposition}[Allard's regularity] There exists a universal constant $\epsilon_0 \ll 1$, and some small fixed $r_0$ depending on the ambient manifold such that the following holds. Let $X$ be an $n$-dimensional multiplicity one stationary integral varifold inside the coordinate ball $B(p, r)$ with $r\leq r_0$. Assume that $p$ lies on the support of $X$, and the volume $\mathcal{H}^n(X \cap B(p,r)) \leq (\omega_n+\epsilon_0)r^n$. Then $X \cap B(p,r/2)$ is a $C^{1, \alpha}$ graph over the tangent plane through $p$, with the $C^{1, \alpha}$-norm bounded by $\frac{1}{100}$.
\end{proposition}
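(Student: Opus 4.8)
The plan is to quote Allard's regularity theorem in the standard Euclidean form and then transfer it to our ambient manifold $Z$ (or $\mathbb{C}^3$ in the tangent cone setting) by exploiting the bounded geometry in the region of interest. First I would fix normal coordinates on a ball $B(p,r_0)$ in $Z$ centered at the vertex region; since the Gibbons-Hawking metric $g_X$ is smooth away from the monopole points and extends smoothly across them, and $g_Z = g_X + g_{\mathbb{R}^2}$, the metric coefficients in these coordinates are $C^k$-close to the Euclidean ones once $r_0$ is taken small enough. In these coordinates the varifold $X$ becomes, up to a controlled distortion, a stationary varifold for a metric that is $\epsilon_1$-close to the flat one, where $\epsilon_1$ can be made as small as we like by shrinking $r_0$. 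The first mean-value-type monotonicity formula then holds with an error term controlled by $r_0$, so the hypothesis $\mathcal{H}^n(X\cap B(p,r))\le(\omega_n+\epsilon_0)r^n$ together with $p\in\operatorname{supp}X$ propagates to a density ratio bound close to one at all smaller scales.

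Next I would invoke the Euclidean Allard theorem (e.g. Simon's \emph{Lectures on Geometric Measure Theory}, or Allard's original paper): there is a universal $\epsilon_0=\epsilon_0(n)$ such that a multiplicity-one stationary integral varifold in the flat ball with density ratio at the center $\le 1+\epsilon_0$ is, in the half-ball, a $C^{1,\alpha}$ graph over its tangent plane with small norm. The multiplicity-one hypothesis is exactly what lets us identify the tangent plane through $p$ and rules out higher-multiplicity sheets. Converting back to the original metric changes the $C^{1,\alpha}$-norm of the graph and the $\alpha$ by factors depending only on the $C^2$-norm of the metric in the coordinate ball, which is bounded uniformly in our region; choosing $r_0$ small makes the final norm bound $\le\frac{1}{100}$. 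For the application to the tangent cone $N\subset\mathbb{C}^3$ the metric is genuinely flat, so no perturbation argument is needed and the statement is literally the classical one.

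The only real subtlety — and the step I would be most careful about — is verifying that the almost-minimality or stationarity is preserved under the coordinate change with a \emph{quantitatively small} first variation error, so that one is genuinely in the hypothesis class of Allard's theorem rather than merely close to it; this is where the bounded geometry assumption (uniform bounds on the curvature and injectivity radius of $g_Z$ near the vertices, which follows from smoothness of the Gibbons-Hawking metric on $X$) is used, and it forces $r_0$ to depend on the ambient manifold exactly as stated. Everything else is a direct citation.
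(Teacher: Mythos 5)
Your proposal is correct: it is the standard reduction of Allard's theorem on a Riemannian manifold of bounded geometry to the Euclidean case (normal coordinates, quantitatively small first-variation error so the varifold has small generalized mean curvature in the flat metric, almost-monotonicity to convert the mass bound plus $p\in\operatorname{supp}X$ into a density ratio pinched near $1$, then the classical Allard theorem and a controlled change of coordinates back). The paper itself offers no proof of this proposition --- it is stated purely as a citation of Allard's regularity theorem --- so your sketch simply fills in the standard argument the authors are implicitly relying on, and it identifies the one genuinely delicate point (quantitative preservation of stationarity under the coordinate change) correctly.
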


Without loss $p$ lies in $a\leq u_2\leq a+\epsilon$, where $\epsilon\ll r_0$ will be fixed later. 
We compute the volume on a small geodesic ball of radius $r\leq r_0$,
\begin{align*}
    \text{Mass}(L_{\text{red}} \cap B(p,r)) &= \int_{L_{\text{red}} \cap B(p,r)} \text{Re}(\Omega) =  2\pi\int_{L_{\text{red}} \cap B(p,r)} (du_1\wedge dy_2-du_2\wedge dy_1).
\end{align*}

On the integration region away from $a\leq u_2\leq a+\epsilon$, by the smallness of the $C^1$-norm of $y_2,u_1$, we see that the integral contribution is bounded by $\omega_3 r^3(1+O(\epsilon))$. On the other hand, the contribution from the region $\{a\leq u_2\leq a+\epsilon\} \cap B(p,r) $ can be estimated by the same idea in Lemma \ref{no-boundary}: the integral of $dy_1\wedge du_2$ is  the Lebesgue area of the $(y_1, u_2)$ projection, which gives a contribution bounded by $O(\epsilon r)$.  The integral of $du_1\wedge dy_2$ is the Lebesgue area of the projection to the $(u_1,y_2)$ plane. Since $u_1,y_2$ are both $O(|y|^{-1})$, this contribution is bounded by $O(|y|^{-2})$. In total,
\[
 \text{Mass}(L_{\text{red}} \cap B(p,r)) \leq \omega_3 r^3(1+C\epsilon) +C\epsilon r+ C|y|^{-2}. 
\]
Now for fixed $r$, we can choose $\epsilon\ll 1$ and $R\gg 1$, such that for $y_1<-R$, all the remainder terms can be dominated by $\epsilon_0 r^3$, so  that
\[
 \text{Mass}(L_{\text{red}} \cap B(p,r)) \leq \omega_3(1+\epsilon_0) r^3.
\]

Thus we can apply Allard regularity to deduce the quantitative smoothness of $L$ close to infinity. Combining with the $C^0$-decay, it follows that $L$ is a $C^k$-small graph over the model half-cylinder.
The $C^k$-norms of $u_1,y_2$  are both bounded by $O(|y|^{-1}).$ In particular, 
\[
\int_{\{ y_1< -1, u_2\in [a,b] \}\cap L  } |\nabla y_2|^2<+\infty.
\]

\textbf{Step 3 (Exponential decay).} It remains to improve the $C^k$ decay to exponential decay.

We first notice that $y_2$ defines a harmonic function on $L$. This is because $y_2$ has zero Hessian on the ambient manifold, and $L$ is a minimal surface.

\begin{lemma}
For any sufficiently large $R\geq R_0$,
    \begin{align}\label{Caccioppoli}
        \int_{ y_1 \leq - R } |\nabla y_2|^2 \leq Ce^{-\gamma R},
    \end{align}
    for a constant $\gamma>0$ independent of $R$.
\end{lemma}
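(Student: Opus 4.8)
The plan is to run a standard Caccioppoli–Moser-type argument for the harmonic function $y_2$ on the minimal surface $L$, exploiting the fact that on each asymptotic end the geometry converges to the product cylinder $\Sigma_i\times\mathbb{R}^+$, whose cross-section $\Sigma_i$ is a closed surface with a spectral gap. First I would set up coordinates near one end: by Step 1 and Step 2 of the preceding proof, the portion $\{y_2\le -R_0\}\cap L$ decomposes into $n$ components, each a $C^k$-small graph over a model half-cylinder, with the $C^k$-norm of the perturbation (and of $u_1,y_2$ themselves) bounded by $O(|y_1|^{-1})$; in particular the induced metric on each end is uniformly equivalent to the product metric and converges to it in $C^k$ as $|y_1|\to\infty$. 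Writing $s=-y_2$ for the (approximate) longitudinal coordinate and $\Sigma$ for the cross-section, the key analytic input is that the Laplacian on $(\Sigma,g_\Sigma)$ has a positive first nonzero eigenvalue $\lambda_1>0$, while $y_2$ restricted to $\{s=\text{const}\}$ has (approximately) zero average — this follows because $y_2$ is, up to exponentially small error, a function of $s$ alone on the model, or more robustly because the harmonic conjugate / flux considerations force the $s$-average of $\partial_s y_2$ to control the relevant boundary terms.

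The main steps, in order: (1) Fix $\gamma\in(0, 2\sqrt{\lambda_1})$ and a smooth cutoff $\chi_R$ equal to $1$ on $\{y_2\le -R\}$, supported in $\{y_2\le -R+1\}$, with $|\nabla\chi_R|\le C$. Using $\Delta_L y_2=0$ and integrating by parts, $\int_L \chi_R^2 e^{-\gamma y_2}|\nabla y_2|^2 = -\int_L y_2\,\mathrm{div}(\chi_R^2 e^{-\gamma y_2}\nabla y_2)$ after an appropriate normalization — more cleanly, test the equation $\Delta_L y_2=0$ against $\chi_R^2 e^{-\gamma s} y_2$ (recall $s=-y_2$), which produces the differential inequality $\int \chi_R^2 e^{-\gamma s}|\nabla y_2|^2 \le \gamma \int \chi_R^2 e^{-\gamma s} y_2 \langle \nabla y_2,\nabla s\rangle + (\text{cutoff terms})$. (2) Absorb the cutoff terms using the already-established finite-energy bound $\int_{\{y_2\le-1\}\cap L}|\nabla y_2|^2<\infty$ from Step 2 (this makes the boundary contribution at $y_2=-R+1$ finite and, after iteration, exponentially small). (3) Control the cross term by the Poincaré inequality on the slices: since the ends converge to $\Sigma\times\mathbb{R}^+$ with first eigenvalue $\lambda_1$ on $\Sigma$, and $y_2$ has vanishing (or exponentially small) slice-average, one gets $\int_{\{s=t\}} y_2^2 \le (\lambda_1^{-1}+o(1))\int_{\{s=t\}}|\nabla^{\Sigma} y_2|^2$, which after the exponential-weight manipulation yields $F(R):=\int_{\{y_2\le -R\}}e^{-\gamma y_2}|\nabla y_2|^2$ satisfies $F'(R)\le -cF(R)$ for $R\ge R_0$ and some $c>0$ depending on $\gamma$ and $\lambda_1$; Grönwall then gives $F(R)\le Ce^{-cR}$, and since $e^{-\gamma y_2}=e^{\gamma R}$ on $\{y_2=-R\}$ at worst, unwinding the weight produces $\int_{\{y_2\le -R\}}|\nabla y_2|^2\le Ce^{-\gamma R}$ after relabeling the constant $\gamma$, as claimed.

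The main obstacle I anticipate is Step (3): making rigorous the claim that $y_2$ has essentially vanishing average on each cross-sectional slice $\{y_2=t\}\cap L$, so that the Poincaré/eigenvalue inequality can actually be applied (a nonzero constant mode would not decay). This should be handled by observing that the flux $\int_{\{y_2=t\}} \partial_\nu y_2$ is independent of $t$ by harmonicity and divergence theorem, and must vanish because $\int_{\{y_2\le-R\}}|\nabla y_2|^2\to 0$; equivalently, the harmonic conjugate of $y_2$ along the cylinder is single-valued up to the period of $\Sigma$, and the relevant component of $[\Sigma]$ pairs trivially. A secondary technical point is that the metric on the end is only asymptotically a product, so $\lambda_1$ must be replaced by $\lambda_1 - \varepsilon(R)$ with $\varepsilon(R)\to 0$; this is harmless since we only need $\gamma<2\sqrt{\lambda_1-\varepsilon(R_0)}$, and we are free to shrink $\gamma$. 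The rest is routine weighted-energy bookkeeping.
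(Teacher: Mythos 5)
Your proposal is correct and rests on the same mechanism as the paper's proof: harmonicity of $y_2$ on the minimal surface $L$, integration by parts, and a Poincar\'e inequality on the cross-sectional slices, culminating in the differential inequality $F'\leq -cF$ and hence exponential decay. The one place you make the argument harder than necessary is the constant mode: the paper applies the divergence theorem to $\operatorname{div}\bigl((y_2-c)\nabla y_2\bigr)$ with $c$ chosen as the mean of $y_2$ over the \emph{single} boundary slice $\{y_2=-R\}$, so the slice Poincar\'e inequality applies directly and no vanishing-average or flux argument is needed, whereas your exponential-weight/cutoff formulation requires controlling the average on \emph{every} slice --- your flux observation does handle this, but at the cost of extra bookkeeping that the unweighted version avoids.
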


\begin{proof}
We already know that the end of $L$ is a small $C^{k,\alpha}$-graph over the cylindrical model. We define 
\[
F(R) = \int_{ \{y_1\leq -R\} \cap L} |\nabla y_2 |^2 .
\]
Since $y_2$ is harmonic,  for any $c\in \mathbb{R}$, we can apply the divergence theorem to deduce
\[
F(R)= \int_{  L\cap \{ y_1=-R\} } (y_2-c) \nabla y_2 \cdot \vec{n} ,
\]
whence by Cauchy-Schwarz and Poincar\'e inequality,
\[
F(R) \leq  (\int_{ L\cap \{ y_1=-R\}   } |\nabla y_2|^2)^{1/2 } (\int_{ L\cap \{ y_1=-R\}   } |y_2-c|^2)^{1/2 } \leq C\int_{ L\cap \{ y_1=-R\}   } |\nabla y_2|^2.
\]
Thus $-CF' \geq F$, which implies the exponential decay.
\end{proof}

Since $L$ is already $C^k$-regular, we can bootstrap this to $C^k$ exponential decay for $y_2$. Using the elliptic system, it is then easy to see that $L$ is asymptotically an exponentially small graph over the model cylindrical special Lagrangian.

\subsection{Topology of the constructed special Lagrangians}\label{topology}

We conclude by proving the last component of Theorem \ref{generalizationDonaldsonScaduto}, thereby confirming Donaldson-Scaduto Conjecture \ref{Donaldson-Scaduto-conjecture}.

\begin{theorem}
    $L$ is homeomorphic to an $n$-holed 3-sphere.
\end{theorem}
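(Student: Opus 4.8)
The plan is to use the structure established in the previous sections: $L$ is a smooth submanifold without boundary (Theorem \ref{smooth-submanifold}), it is the closure of $L^\circ \cong \mathbb{D}^2 \times S^1$, and it has $n$ asymptotically cylindrical ends each modeled on $\Sigma_i \times \mathbb{R}^+ \cong S^2 \times \mathbb{R}^+$. The points added in passing to the closure lie above the $n$ vertices $p_i$, above the arcs $\partial C_{p_i}$. So topologically $L$ is built from the product $\mathbb{D}^2 \times S^1$ by a compactification at the $U(1)$-fixed points sitting over the vertices. I would first identify precisely what is added over each vertex.

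First I would analyze the local picture near a vertex $p_i$. By Lemma \ref{smoothness}, every tangent cone over a point $x$ with $q(x) \in \{p_i\}\times \partial C_{p_i}$ is a multiplicity-one $3$-plane, so $L$ is a smooth $3$-manifold near these points; moreover the smoothness proof shows $L$ is graphical over, say, the $(y_1,u_2)$ and $(y_2,u_1)$ planes near these points. The key local model: in the Gibbons-Hawking picture the $U(1)$-fiber over $p_i \in \mathbb{R}^3$ collapses, so $X$ near the exceptional point looks like $\mathbb{C}^2$ with $U(1)$ acting by $e^{i\theta}\cdot(z,w) = (e^{i\theta}z, e^{i\theta}w)$ (Hopf-type), and $u_3$ is a moment map. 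The slice $u_3 = 0$ together with the graphical condition cuts out, near the fixed point, a piece diffeomorphic to a disk bundle; taking the $U(1)$-quotient and then re-assembling, the portion of $L$ over a small neighborhood of $p_i$ should be diffeomorphic to a solid handle — concretely, I expect that $L$ near the vertex $p_i$ looks like (an open subset of) $\mathbb{R}^3$, i.e. each vertex contributes one smooth $3$-dimensional "cap" region, and the $U(1)$-orbits over the edge $(p_{i-1},p_i)$ and over the edge $(p_i,p_{i+1})$ get glued along the fixed-point set over $p_i$.

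With the local models understood, I would assemble the global picture. Away from the vertices, $L$ retracts onto $L^\circ \cong \mathbb{D}^2 \times S^1$ minus collar neighborhoods of the $n$ ends and of the $n$ vertex regions. The standard picture: $\mathbb{D}^2 \times S^1$ is a solid torus; the $n$ ends are removed as $n$ disjoint $S^2 \times \mathbb{R}^+$ pieces (each $S^2 = \Sigma_i$ being the preimage of an edge), and over each vertex we glue in a ball-like cap that has the effect of filling in the $S^1$-direction degeneration. The cleanest route is to exhibit $L$ explicitly as an $n$-holed $3$-sphere: take $S^3$, remove $n$ disjoint open $3$-balls, and show this is diffeomorphic to $L$. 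One concrete realization: $\pi_1(L) = \overline{U}$ is a convex polygon, and $L \to \overline{U}$ is (away from the gradient-blowup) the restriction of $X \to \mathbb{R}^3$, namely an $S^1$-bundle over the polygon degenerating to a point over each vertex and (after the $\nabla\varphi$ graph is taken) the preimage of each open edge runs off to infinity as a cylinder. Filling each edge-cylinder end abstractly by a $3$-ball, the resulting closed manifold fibers over $\overline{U}$ with $S^1$ fibers over the interior, point-fibers over vertices, and disk-fibers over (the closures of) edges; this is exactly the Gibbons-Hawking description of $S^3$ (when $\overline U$ is a polygon with $n$ vertices, this is the standard polygon-space / toric picture giving $S^3$ with a piecewise-circle action). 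Hence $L \cong S^3 \setminus (\text{$n$ balls})$.

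The main obstacle I expect is making the vertex-gluing rigorous — i.e. proving that the closure of $L^\circ$ over a neighborhood of $p_i$ is genuinely a $3$-ball (or solid cap) attached in the expected way, rather than merely a smooth $3$-manifold of a priori unknown type. Theorem \ref{smooth-submanifold} gives smoothness but not the diffeomorphism type; one needs to leverage the graphicality over $(y_1,u_2)$ and $(y_2,u_1)$ planes from Step 3 of the proof of Lemma \ref{smoothness}, together with the explicit $U(1)$-action (which has an isolated fixed circle $\{p_i\}\times \partial C_{p_i}$ in $L$ — and one must check this fixed set is a single arc, not a circle or several arcs, using Lemma \ref{close-to-vertices} and convexity of $C_{p_i}$), to pin down the local model. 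Once that is done, gluing the local caps to the solid-torus complement $L^\circ$ and identifying the result with a punctured $S^3$ is a routine Mayer–Vietoris / handle-decomposition exercise, e.g. by computing $H_*(L;\mathbb{Z})$ and $\pi_1(L)$ and invoking that a compact $3$-manifold with $n$ spherical boundary components, simply connected with the homology of $\#^{n-1}(S^2\times S^1)$-complement, is the $n$-holed $3$-sphere.
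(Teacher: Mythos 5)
Your overall strategy --- cap the $n$ cylindrical ends of the solid torus $L^\circ\cong\mathbb{D}^2\times S^1$ with $3$-balls and identify the resulting closed $3$-manifold as $S^3$ using the collapsing $U(1)$-structure at the vertices --- is the same as the paper's, and both of your proposed endgames appear there: the paper's first argument shows $\pi_1(L)=0$ by Van Kampen (the $U(1)$-orbit generating $\pi_1(L^\circ)$ bounds a disk inside each vertex cap $V_i\cong\mathbb{D}^2\times\mathbb{R}$) and then invokes the Poincar\'e conjecture --- which is also what your ``simply connected with $n$ spherical boundary components'' fallback secretly relies on, together with Alexander's theorem that the capping spheres bound balls; the paper's second argument realizes the capped manifold as an $S^1$-fibration over a closed disk collapsing over the boundary and reads off a genus-one Heegaard splitting with meridian and longitude exchanged, which is your ``cleanest route.''

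The one concrete flaw is that your explicit realization of that fibration is set up over the wrong base. You project $L$ to the polygon $\overline{U}\subset\mathbb{R}^2_{(u_1,u_2)}$ and claim circle fibers over the interior degenerating to \emph{points} over the vertices, with the edges carrying cylinder/disk fibers. In fact the fiber of $L\to\overline U$ over a vertex $p_i$ is the entire arc $\{p_i\}\times\partial C_{p_i}$ of $U(1)$-fixed points (one point of $L$ for each limiting subgradient), not a single point, and the preimage of every open edge is \emph{empty}, since $\nabla\varphi$ diverges there by Proposition \ref{prop:gradientdivergence}; after capping the ends there is no natural map to $\overline U$ at all, so the ``toric picture of $S^3$ over the polygon'' does not literally parse. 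The correct base is the $U(1)$-quotient $L_{\text{red}}=L/U(1)$, a disk with $n$ boundary punctures whose boundary arcs are the $\partial C_{p_i}$ and whose punctures are the ends at infinity; capping the ends fills the punctures and produces $\overline{\mathbb{D}}^2$ with circle fibers over the interior collapsing exactly over $\partial\overline{\mathbb{D}}^2$, which is the standard genus-one picture of $S^3$. With that substitution your argument goes through. Your stated worry about pinning down the vertex-local model is legitimate, but the inputs you list (Theorem \ref{smooth-submanifold}, Lemma \ref{close-to-vertices}, convexity of $C_{p_i}$ forcing the fixed locus over $p_i$ to be a single arc) are exactly what the paper uses, and it treats this point at essentially the same level of detail you propose.
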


\begin{proof}
Let $L'$ be the 3-manifold obtained by truncating the ends of $L$ at a sufficiently large distance $R$, denoted by $L_R$, and sealing them by adding 3-balls, resulting in a closed 3-manifold. We show $L' \cong S^3$.

\textbf{First Argument} (employing the Poincaré conjecture): We prove that $L'$ is simply connected.

Note that $L$ is fibered over $L_{\text{red}}$, where $L_{\text{red}}$ is homeomorphic to $\overline{\mathbb{D}} \setminus \{a_1, \ldots, a_n\}$ for $n$ distinct boundary points $a_1, \ldots, a_n$. The fiber over any interior point $z \in L_{\text{red}}^{\circ}$ is a copy of $U(1)$, and the fibers collapse to a point when $z \in \partial L_{\text{red}}$. 

Let $C_1, \ldots, C_n$ denote the boundary components of $L_{\text{red}}$. Recalling the projection map $q: u_3^{-1}(0) \to Z_{\text{red}}$, let $V_i := q^{-1}(U_i) \subset L$ be the preimage of an open neighbourhood $U_i$ of the boundary component $C_i$ in $L_{\text{red}}$, such that $U_i \cap U_j = \emptyset$ when $i \neq j$. Each $V_i$ is homeomorphic to $\mathbb{D} \times \mathbb{R}$. Let $U'_i$ be another open neighborhood of the boundary component $C_i$ in $L_{\text{red}}$ slightly smaller than $U_i$. Let $V_0$ be the open set in $L$ defined as the preimage of $U_0 := L_{\text{red}} \setminus \cup_{i=1}^n \overline{U}_i'$. The set $V_0$ is homeomorphic to $\mathbb{D} \times U(1)$. The configuration of open sets in $L_{\text{red}}$ is shown in Figure \ref{fundamental-group}.  

\begin{figure}[H]
\includegraphics[width=6cm]{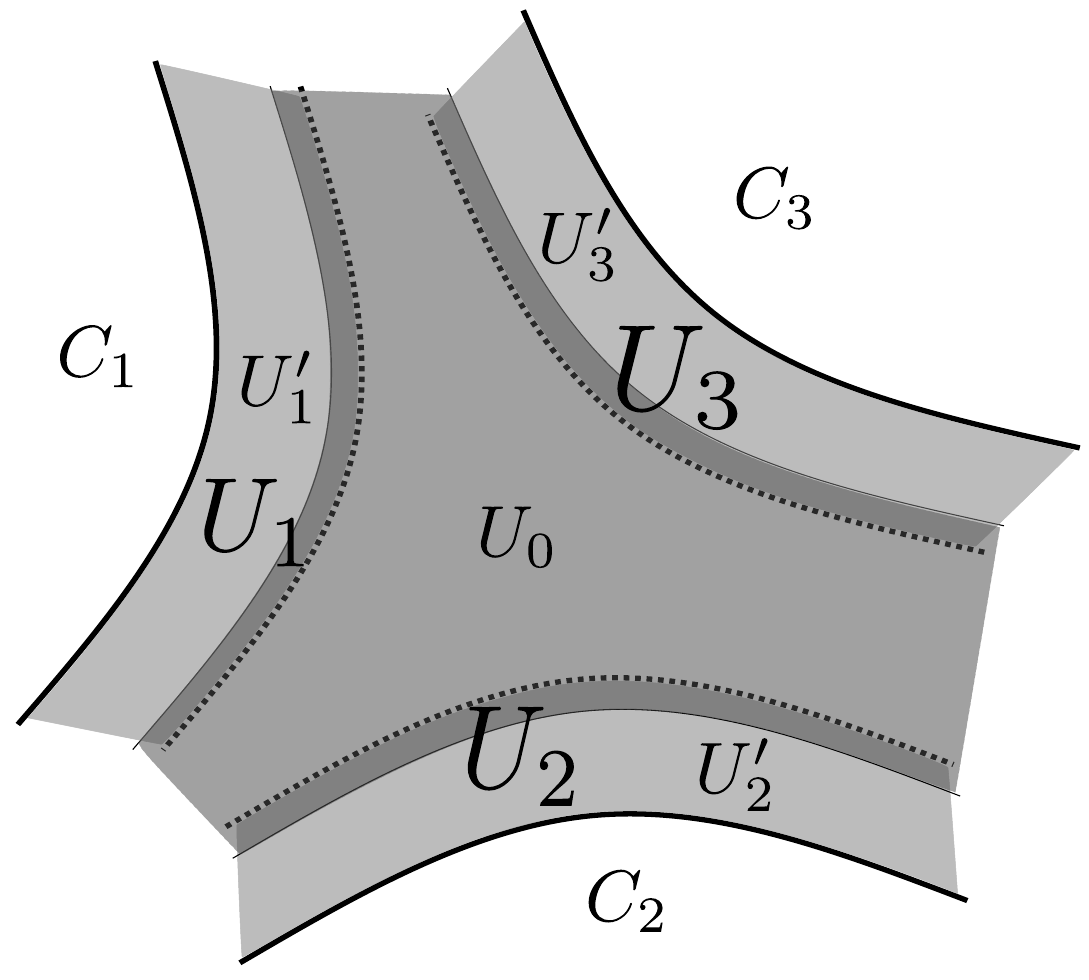}
\centering
  \caption{Boundary components $C_i$ and sets $U_0$ and $U_i$, in the case $n=3$.}
  \label{fundamental-group}
\end{figure}
Let $x_0 \in V_0 \cap V_1$ be the base point. We have $\pi_1(V_0, x_0) \cong \mathbb{Z}$, with a generator presented by a curve encircling the $U(1)$-fiber based at $x_0$. Furthermore, $\pi_1(V_1, x_0) = \{0\}$ and $\pi_1(V_0 \cap V_1, x_0) \cong \mathbb{Z}$, and the inclusion map $ V_0\cap V_1 \to V_0 $  takes the generator of $\pi_1(V_0\cap V_1, x_0)$ to the generator of 
$\pi_1( V_0, x_0)$. Therefore, by Van Kampen's theorem, $\pi_1(V_0 \cup V_1, x_0) = \{0\}$. Applying Van Kampen's theorem again repeatedly and adding $V_i$ inductively yields $\pi_1(V_0 \cup V_1 \cup \ldots \cup V_n, x_0) = \{0\}$, namely $\pi_1(L)=0$. Consequently, $\pi_1(L') = 0$, and therefore, by the Poincaré conjecture, $L'$ is a 3-sphere.

\textbf{Second argument} (without employing the Poincaré conjecture): We can extend the map $q$ to obtain $q: L' \to L_{\text{red}}'$, where $L_{\text{red}}' \cong \bar{\mathbb{D}}^2$ is the truncated version of $L_{\text{red}}$ capped off with $n$ half-discs, where the preimage of any interior point under $q$ is a copy of $S^1$, and the preimage of any boundary point is a single point. In other words, $L_{\text{red}}'$ is an $S^1$-bundle over the interior of $\mathbb{D}^2$, which collapsing to a point above each boundary point.

Let $C$ be an embedded circle in $L_{\text{red}}'$ which divides it into two regions: the interior $D_1$ and the exterior $D_2$. Let $\Sigma = q^{-1}(C)$, and $\Sigma_i = q^{-1}(D_i)$ for $i \in \{1,2\}$. The Heegaard surface $\Sigma = \mathbb{T}^2$, and handlebodies $\Sigma_1 \cong \overline{\mathbb{D}}^2 \times S^1$, and $\Sigma_2 \cong S^1 \times \overline{\mathbb{D}}^2$, leading to a genus 1 Heegaard decomposition of $L'$, where the gluing map of this decomposition maps the meridian of $\Sigma_1$ to the longitude of $\Sigma_2$ and the longitude of $\Sigma_1$ to the meridian of $\Sigma_2$. This description characterizes the genus 1 Heegaard decomposition of $S^3$.

\end{proof}

\section{Appendix: parameter count} \label{appendix}

Our main construction depends on the parameters $\{b_k\}_{k=1}^n$. Changing these parameters by the same additive constant amounts to adding a constant to the Dirichlet solution $\varphi$, which does not affect the special Lagrangian. In total, our construction depends on $n-1$ real parameters. Furthermore, as showed in \cite{MR4495257}, each of these special Lagrangians with fixed asymptotics are rigid.

We now provide an alternative perspective on why the deformations of the $n$ asymptotically cylindrical ends are subject to one additional constraint. We suppose that the $n$ asymptotic half cylinders are contained in $L_i=\Sigma_i \times l_i$, where we recall that $\Sigma_i= \pi^{-1}[p_i,p_{i+1}]$ and $l_i\subset \mathbb{R}^2$ is defined by $l_i = \{y \in \mathbb{R}^2 \; | \;  y\cdot (p_{i+1}-p_i)=c_i\}$ for some $c_i \in \mathbb{R}$.

\begin{lemma}
Let $L$ be an asymptotically cylindrical special Lagrangian in $X\times \mathbb{R}^2$ with asymptotes ends $L_i$ for $i=1,\ldots, n$. Then, we have $\sum_{i=1}^n c_i=0$.
\end{lemma}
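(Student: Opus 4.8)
The plan is to integrate the closed form $\operatorname{Im}(\Omega)$ over a suitable compact region of $L$ and let the region exhaust $L$, using Stokes' theorem together with the known exponential convergence of $L$ to its cylindrical ends. Since $L$ is special Lagrangian, $\operatorname{Im}(\Omega)|_L \equiv 0$, so the integral over any compact piece is zero; on the other hand, writing $\operatorname{Im}(\Omega) = d\eta$ for an explicit primitive $\eta$ on $Z = X\times\mathbb{R}^2$ reduces the computation to a sum of boundary integrals over the truncated ends, and each such boundary integral should evaluate to (a universal nonzero multiple of) $c_i$. Summing over $i$ and using that the total is zero gives $\sum_i c_i = 0$.

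Concretely, first I would truncate $L$ at level $R$ to get a compact manifold-with-boundary $L_R$ whose boundary consists of $n$ pieces, each $C^k$-exponentially close (by the asymptotics theorem of Section \ref{Asymptotic-behavior}) to the corresponding slice $\Sigma_i \times \{y \in l_i : |y| = R\}$, i.e. a copy of $\Sigma_i \times S^1_R$ after the rotation $R$. Using the dimensional reduction from Section \ref{setup}, on the $U(1)$-invariant locus $\operatorname{Im}(\Omega)$ descends, up to the $2\pi$ from the fiber, to the 2-form $du_1\wedge dy_1 + du_2\wedge dy_2$ on $Z_{\text{red}}$; a convenient primitive is $\eta = u_1\,dy_1 + u_2\,dy_2$ (or equally $-(y_1\,du_1 + y_2\,du_2)$), and one checks $d\eta$ matches the reduced $\operatorname{Im}(\Omega)$. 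Then $0 = \int_{L_R}\operatorname{Im}(\Omega) = \sum_{i=1}^n \int_{\partial_i L_R}\eta$ up to the overall $2\pi$ factor. On the end $\partial_i L_R$, the $u$-coordinates converge to the segment $[p_i,p_{i+1}]$ and the $y$-coordinates trace out (a translate of) the line $l_i$ truncated at radius $R$; parametrizing $\partial_i L_R$ by $\pi^{-1}[p_i,p_{i+1}]$, integrating $u_1\,dy_1 + u_2\,dy_2$ along the $S^1$-direction in $\mathbb{R}^2$, and passing $R\to\infty$ using the exponential decay to kill the error terms, the boundary integral limits to a fixed numerical multiple of $(p_{i+1}-p_i)\cdot y_0^{(i)} = c_i$, where $y_0^{(i)}$ is any point on $l_i$; orientations are consistent across the ends because they all inherit the orientation of $L$. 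Hence $\sum_i c_i = 0$.

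The main obstacle I anticipate is bookkeeping rather than analysis: getting the orientation of each truncation boundary right so that the $n$ contributions add (rather than partially cancel), and verifying that the chosen primitive $\eta$ genuinely restricts on each model cylinder $L_i$ to something whose period around the $S^1$ equals exactly $c_i$ (up to the same universal constant for every $i$) — this is where the geometric meaning of $c_i$ as $(p_{i+1}-p_i)\cdot y$ enters, and where one must be careful that the $90^\circ$ rotation $R$ relating the $u$- and $y$-planes does not introduce a sign discrepancy between different edges. The convergence issue (that the difference between the integral over $\partial_i L_R$ and over the model slice is $O(e^{-\gamma R})$) is then immediate from the exponential $C^k$-asymptotics already established, since $\eta$ has coefficients with at most linear growth and the region of integration has bounded $u$-size.
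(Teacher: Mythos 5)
Your overall strategy is the same as the paper's (truncate, apply Stokes to a primitive of $\operatorname{Im}(\Omega)$, use the exponential asymptotics to evaluate the boundary terms on the model ends), but the execution has a concrete error in the boundary bookkeeping that breaks the computation. The boundary of the truncated end is $\Sigma_i \times \{y_0^{(i)}\}$ for a single point $y_0^{(i)}$ on the ray — a copy of the $2$-sphere $\Sigma_i$ — not ``$\Sigma_i\times S^1_R$'': the set $\{y\in l_i: |y|=R\}$ is a point, and there is no $S^1$-direction in $\mathbb{R}^2$ to integrate along. Consequently your primary choice of primitive, $\eta=u_1\,dy_1+u_2\,dy_2$, restricts to \emph{zero} on each such boundary piece (the $y$-coordinates are constant there), so the ends contribute nothing and your claim that each boundary integral evaluates to a multiple of $c_i$ fails for this $\eta$. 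Worse, if you insist on working in the reduced picture $L_{\mathrm{red}}$, its boundary is not just the $n$ truncation arcs: it also contains the collapsed-fiber curves $\{p_i\}\times\partial C_{p_i}$ over the vertices, which you omit; for your $\eta$ these vertex curves are where all the nontrivial contributions actually live, and recovering $\sum c_i=0$ from them requires a separate argument about the asymptotics of $\partial C_{p_i}$ that you have not supplied.

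The repair is to work upstairs with the $2$-form $\lambda=y_1\omega_1+y_2\omega_2$, which satisfies $d\lambda=\operatorname{Im}(\Omega)$ on all of $Z$; note that there is no upstairs analogue of ``$u\,dy$'' because $\omega_1,\omega_2$ are not exact on $X$ — indeed their nonzero periods are precisely what makes the argument work. On the closed boundary sphere $\Sigma_i\times\{y_0^{(i)}\}$ one gets
\begin{equation*}
\int_{\Sigma_i\times\{y_0^{(i)}\}}\lambda \;=\; y_0^{(i)}\cdot\Bigl(\int_{\Sigma_i}\omega_1,\int_{\Sigma_i}\omega_2\Bigr)\;=\;2\pi\, y_0^{(i)}\cdot(p_{i+1}-p_i)\;=\;2\pi c_i,
\end{equation*}
with all orientations induced from $L$, and the exponential decay controls the discrepancy between $\partial_i L_R$ and the model slice exactly as you anticipated. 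Your alternative primitive $-(y_1\,du_1+y_2\,du_2)$ is the reduced shadow of $\lambda$ and does give $\pm c_i$ on the truncation arcs while vanishing on the vertex curves, but the two primitives are not interchangeable end-by-end: they differ by $d(u\cdot y)$, and the individual boundary pieces in the reduced picture are not closed, so the choice matters.
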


\begin{proof}
We can find a primitive for $\text{Im}(\Omega)$,
\begin{align*}
   \text{Im} (\Omega) = \omega_1 \wedge dy_1 + \omega_2 \wedge dy_2 = d \lambda \quad \text{ with } \quad \lambda =  y_1 \omega_1+y_2 \omega_2 .
\end{align*}
We apply the Stokes theorem to $L$ truncated at a very large distance $R$, which is a manifold $L_R$ with boundary diffeomorphic to $\cup_{i=1}^n \Sigma_i$. Since $L$ is a special Lagrangian,
\[
  0 = \int_{L_R}\text{Im}(\Omega) =
   \sum_{i=1}^n  \int_{\Sigma_i} \lambda. 
\]
By definition, $L$ is an exponentially small graph over $L_i$ in the asymptotic regime. Consequently, we can evaluate the boundary integrals on $L_i$ with an error of $O(e^{-cR})$, which disappears in the limit when $R\to +\infty$. We have
\[
(\int_{\Sigma_i} \omega_1, \int_{\Sigma_i}\omega_2)= 2\pi (p_{i+1}-p_i) \in \mathbb{R}^2,
\]
hence 
\[
\sum_{i=1}^n  \int_{\Sigma_i} \lambda=\sum_{i=1}^n \left( y\cdot (p_{i+1}-p_i) +O(e^{-cR}) \right) =\sum_{i=1}^n \left( c_i +O(e^{-cR}) \right),
\]
and letting $R\to +\infty$, we deduce $\sum_1^n c_i=0$.
\end{proof}

\printbibliography

\noindent
\author{Department of Mathematics, Duke University, 120 Science Dr, Durham, NC 27708-0320} \\ E-mail address: \href{ mailto:Saman.HabibiEsfahani@msri.org}{saman.habibiesfahani@duke.edu}

\vspace{10pt}

\noindent
\author{Department of Mathematics, Massachusetts Institute of Technology, 77 Massachusetts Avenue, Cambridge, MA 02139
} \\ E-mail address: \href{ mailto:yangmit@mit.edu}{yangmit@mit.edu} 
\end{document}